\title{ Strongly self-absorbing \cstar-dynamical systems }
\author{ Gábor Szabó }
\address{KU Leuven, Department of Mathematics, Celestijnenlaan 200B \phantom{------}\linebreak 
\text{}\hspace{3mm} B-3001 Leuven, Belgium \phantom{------} (Current address)}
\email{gabor.szabo@kuleuven.be}
\thanks{\emph{Supported by:} SFB 878 \emph{Groups, Geometry and Actions}}
\subjclass[2010]{46L55}
\numberwithin{equation}{section}
\begin{document}

\newcommand\set[1]{\left\{#1\right\}}  

\newcommand{\IC}[0]{\mathbb{C}} 
\newcommand{\IN}[0]{\mathbb{N}}
\newcommand{\IQ}[0]{\mathbb{Q}} 
\newcommand{\IR}[0]{\mathbb{R}}
\newcommand{\IT}[0]{\mathbb{T}}
\newcommand{\IZ}[0]{\mathbb{Z}}

\newcommand{\CC}[0]{\mathcal{C}} 
\newcommand{\CD}[0]{\mathcal{D}}
\newcommand{\CM}[0]{\mathcal{M}} 
\newcommand{\CO}[0]{\mathcal{O}}  
\newcommand{\CR}[0]{\mathcal{R}}
\newcommand{\CU}[0]{\mathcal{U}}
\newcommand{\CZ}[0]{\mathcal{Z}}

\newcommand{\fp}[0]{\mathfrak{p}}
\newcommand{\mfi}[0]{\mathfrak{i}}
\newcommand{\mfj}[0]{\mathfrak{j}}

\renewcommand{\phi}[0]{\varphi}
\newcommand{\eps}[0]{\varepsilon}

\newcommand{\quer}[0]{\overline}
\newcommand{\id}[0]{\operatorname{id}}		
\newcommand{\eins}[0]{\mathbf{1}}			
\newcommand{\ad}[0]{\operatorname{Ad}}
\newcommand{\fin}[0]{{\subset\!\!\!\subset}}
\newcommand{\Aut}[0]{\operatorname{Aut}}
\newcommand*\onto{\ensuremath{\joinrel\relbar\joinrel\twoheadrightarrow}} 
\newcommand*\into{\ensuremath{\lhook\joinrel\relbar\joinrel\rightarrow}}  
\newcommand{\dst}[0]{\displaystyle}
\newcommand{\cstar}[0]{$\mathrm{C}^*$}
\newcommand{\dist}[0]{\operatorname{dist}}
\newcommand{\ue}[1]{{~\approx_{\mathrm{u},#1}}~}
\newcommand{\mue}[0]{\approx_{\mathrm{mu}}}
\newcommand{\End}[0]{\operatorname{End}}
\newcommand{\ann}[0]{\operatorname{Ann}}
\newcommand{\strict}[0]{\stackrel{\text{\tiny str}}{\longrightarrow}}
\newcommand{\cc}[0]{\simeq_{\mathrm{cc}}}
\newcommand{\scc}[0]{\simeq_{\mathrm{scc}}}

\newtheorem{satz}{Satz}[section]		
\newtheorem{cor}[satz]{Corollary}
\newtheorem{lemma}[satz]{Lemma}
\newtheorem{prop}[satz]{Proposition}
\newtheorem{theorem}[satz]{Theorem}
\newtheorem*{theoreme}{Theorem}

\theoremstyle{definition}
\newtheorem{conjecture}[satz]{Conjecture}
\newtheorem{defi}[satz]{Definition}
\newtheorem*{defie}{Definition}
\newtheorem{defprop}[satz]{Definition \& Proposition}
\newtheorem{nota}[satz]{Notation}
\newtheorem*{notae}{Notation}
\newtheorem{rem}[satz]{Remark}
\newtheorem*{reme}{Remark}
\newtheorem{example}[satz]{Example}
\newtheorem{defnot}[satz]{Definition \& Notation}
\newtheorem{question}[satz]{Question}
\newtheorem*{questione}{Question}

\newenvironment{bew}{\begin{proof}[Proof]}{\end{proof}}

\begin{abstract} 
We introduce and study strongly self-absorbing actions of locally compact groups on \cstar-algebras. This is an equivariant generalization of a strongly self-absorbing \cstar-algebra to the setting of \cstar-dynamical systems. 
The main result is the following equivariant McDuff-type absorption theorem: A cocycle action $(\alpha,u): G\curvearrowright A$ on a separable \cstar-algebra is cocycle conjugate to its tensorial stabilization with a strongly self-absorbing action $\gamma: G\curvearrowright\CD$, if and only if there exists an equivariant and unital $*$-homomorphism from $\CD$ into the central sequence algebra of $A$. We also discuss some non-trivial examples of strongly self-absorbing actions.
\end{abstract}

\maketitle

\tableofcontents


\setcounter{section}{-1}

\section{Introduction}
\noindent
Group actions on \cstar-algebras and von Neumann algebras are one of the most fundamental
subjects in the theory of operator algebras. Because of this, the problem of classifying group actions has a long history within the theory of operator algebras. A typical example is Connes' classification of injective factors, which involves the classification of cyclic group actions \cite{Connes75, Connes77}. This has sparked a lot of work towards classifying group actions on von Neumann algebras, of which the classification of actions of countable, amenable groups on injective  factors (done by many hands) can be regarded as a highlight, see \cite{Connes75, Connes77, Jones80, Ocneanu85, SutherlandTakesaki89, KawahigashiSutherlandTakesaki92, KatayamaSutherlandTakesaki98}. Although this has historically been developed as a case-by-case study by the factor type, a unified approach for McDuff factors has been found in \cite{Masuda07} building on an intertwining argument \cite{EvansKishimoto97} of Evans-Kishimoto originally invented for the \cstar-setting.

Classification of group actions on \cstar-algebras is, however, still a far less developed subject. This is not least because of substantial $K$-theoretical difficulties, which are already apparent in the seemingly harmless case of finite groups: these can act on a $K$-theoretically trivial algebra such as $\CO_2$ and produce a variety of possible $K$-groups in the crossed product, see \cite{Izumi04, Izumi04II, BarlakSzabo15}. However, despite such difficulties, there is more and more important work going on to make progress on the \cstar-algebraic side. Finite group actions with the Rokhlin property were successfully classified by Izumi \cite{Izumi04, Izumi04II} on classifiable classes of \cstar-algebras.
Phillips is currently developing a theory for the classification of pointwise outer, finite group actions on Kirchberg algebras via equivariant $KK$-theory. In his pioneering work \cite{Kishimoto95, Kishimoto96, Kishimoto98, Kishimoto98II}, Kishimoto has outlined a method to use the Rokhlin property in order to classify single automorphisms (i.e.~$\IZ$-actions) on certain inductive limit \cstar-algebras such as A$\IT$ algebras. This has been developed further by work of Matui \cite{Matui10} for AH algebras, Nakamura \cite{Nakamura00} for Kirchberg algebras, and more recently Lin \cite{Lin15} for TAI algebras. These techniques and results are being pushed to $\IZ^d$-actions and even beyond, see \cite{Nakamura99, KatsuraMatui08, Matui08, Matui10, Matui11, IzumiMatui10, Izumi12OWR}. The reader is recommended to also consult Izumi's survey article \cite{Izumi10} for an informative summary of these developements.

In the non-equivariant classification theory of \cstar-algebras, also known as the Elliott program, a certain class of \cstar-algebras can be conceptually easier to handle under the assumption that they absorb certain \cstar-algebras tensorially. In the general form of the Elliott program, this relates to Jiang-Su stability of \cstar-algebras and the Toms-Winter conjecture \cite{ElliottToms08, Winter10, WinterZacharias10}. A famous result by Kirchberg-Phillips \cite{KirchbergPhillips00} asserts that all Kirchberg algebras absorb the Cuntz algebra $\CO_\infty$ and are absorbed by the Cuntz algebra $\CO_2$ tensorially, which Phillips' approach \cite{Phillips00} to the classification of these algebras made use of. (See also \cite{KirchbergC} for Kirchberg's more direct approach to this classification.) In this way, one can regard the \cstar-algebras $\CO_2$ and $\CO_\infty$ as cornerstones of the classification of Kirchberg algebras. Even in order to classify stably finite \cstar-algebras, the method of \emph{localizing} the classification at a strongly self-absorbing \cstar-algebra, as outlined by Winter in \cite{Winter14Lin}, has become the state-of-the-art. Strongly self-absorbing \cstar-algebras have historically been looked at by example, but for the first time conceptually fleshed out by Toms-Winter in \cite{TomsWinter07}. A very useful tool from \cite{TomsWinter07} in studying $\CD$-absorbing \cstar-algebras for a strongly self-absorbing \cstar-algebra $\CD$ is a McDuff-type result, a variant of which was also proved by Kirchberg in \cite{Kirchberg04}. We note that the $K_1$-injectivity criterion in both \cite{Kirchberg04} and \cite{TomsWinter07} has turned out to be redundant due to the main result of \cite{Winter11}.

\begin{theoreme}
Let $\CD$ be a strongly self-absorbing \cstar-algebra. A separable \cstar-algebra $A$ is $\CD$-stable if and only if its (corrected) central sequence algebra 
\[
F_\infty(A)=(A_\infty\cap A')/\ann(A,A_\infty)
\] 
contains a unital copy of $\CD$.
\end{theoreme}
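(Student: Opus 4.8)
The plan is to prove the two implications separately; the forward (``only if'') implication is a routine repackaging of self-absorption, while the converse carries all the weight. For the forward direction, assume a fixed isomorphism $A \cong A \otimes \CD^{\otimes\infty}$ (using $\CD \cong \CD \otimes \CD \cong \CD^{\otimes\infty}$). Embedding $\CD$ as the $n$-th tensor factor and transporting this into $A$ via the isomorphism yields a sequence of unital $*$-homomorphisms $\CD \to A$; for any element of $A$ approximated on the first $m$ factors, all factors with $n > m$ commute with it, so this sequence is asymptotically central and asymptotically multiplicative. Passing to the sequence algebra gives a unital $*$-homomorphism $\CD \to A_\infty \cap A'$, which descends to $F_\infty(A)$, as required.

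For the converse I would start from a unital $*$-homomorphism $\phi \colon \CD \to F_\infty(A)$ and first extract the key structural map. The point of passing to the \emph{corrected} central sequence algebra (the quotient by $\ann(A,A_\infty)$) is precisely to make the following construction legitimate: combining the canonical constant-sequence embedding $\kappa \colon A \to A_\infty$ with a lift of $\phi$ produces two commuting subalgebras, and since $\CD$ is nuclear the universal map out of the commuting pair factors through the minimal tensor product. This yields a genuine $*$-homomorphism $\Theta \colon A \otimes \CD \to A_\infty$ with $\Theta(a \otimes 1_\CD) = \kappa(a)$. Thus $\Theta$ is a one-sided inverse, at the level of the sequence algebra, to the first-factor embedding $\iota_A \colon A \to A \otimes \CD$, $a \mapsto a \otimes 1_\CD$.

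The remaining task is to upgrade this into an honest isomorphism $A \cong A \otimes \CD$ by Elliott's two-sided approximate intertwining. On one side, a reindexing argument applied to $\Theta$ gives, for each finite $F \fin A \otimes \CD$ and each $\eps > 0$, a $*$-homomorphism $\psi \colon A \otimes \CD \to A$ with $\psi \circ \iota_A \approx_\eps \id_A$ on the relevant finite set, since $\Theta \circ \iota_A = \kappa$. On the other side, to see that $\iota_A \circ \psi$ is approximately unitarily equivalent to $\id_{A \otimes \CD}$, I would invoke strong self-absorption of $\CD$: inside $A \otimes \CD$ the ``$\phi$-copy'' of $\CD$ and the tensorial copy $1_A \otimes \CD$ are commuting unital copies of a strongly self-absorbing algebra, so by the approximately inner half-flip (equivalently, $\CD \otimes \CD \cong \CD$ with first-factor embedding approximately inner) they can be interchanged by unitaries to within $\eps$ on $F$. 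Feeding these two families of maps and the implementing unitaries into the intertwining machine produces the desired isomorphism.

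I expect the main obstacle to lie entirely in the last step: organizing the finite-stage maps and unitaries coherently so that the telescoping intertwining converges. Concretely, one must choose at each stage $n$ unitaries $u_n$ implementing the half-flip to within $2^{-n}$ along an increasing exhaustion of $A \otimes \CD$, simultaneously lift the central homomorphism to honest finite-stage completely positive (eventually multiplicative) maps while keeping the contribution of $\ann(A,A_\infty)$ under control, and verify that the approximate left and right inverses remain compatible across consecutive stages. The conceptual inputs — nuclearity of $\CD$ for the existence of $\Theta$, and the approximately inner half-flip for the reverse composite — are comparatively clean; it is this quantitative bookkeeping, rather than any single hard estimate, that constitutes the technical heart of the argument. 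I note finally that the removal of the earlier $K_1$-injectivity hypothesis (via \cite{Winter11}) is what permits asserting the existence of the required half-flip unitaries without additional assumptions on $A$ or $\CD$.
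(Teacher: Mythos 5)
Your forward direction is fine, and the construction of $\Theta\colon A\otimes\CD\to A_\infty$ with $\Theta(a\otimes\eins_\CD)=\kappa(a)$ (using nuclearity of $\CD$ and the fact that products of $F_\infty(A)$ with $A$ are well defined as in \ref{kirchberg-projection}) is exactly the right first move; it also matches the opening step of the paper's proof of the equivariant version \ref{equi-McDuff}. The problem is what you do next. The paper (following \cite[2.3.5 and 7.2.2]{Rordam}, \cite{TomsWinter07}, and here Lemma \ref{one side}) runs a \emph{one-sided} intertwining: one only has to produce, for each finite set and $\eps$, a unitary $z$ in the unitalization of $A\otimes\CD$ that $\eps$-commutes with $A\otimes\eins_\CD$ and satisfies $\dist(z^*xz,\,A\otimes\eins_\CD)\leq\eps$; the half-flip unitaries transported through $\Theta$ (chosen in $\CU_0$ via \ref{1-hom-unitaries} so that they lift to genuine unitaries — your remark about \cite{Winter11} is exactly on point here) supply these, and surjectivity of the limit isomorphism then comes for free. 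No map $A\otimes\CD\to A$ is ever needed.

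Your two-sided Elliott intertwining, by contrast, has two genuine gaps. First, the finite-stage left inverses $\psi\colon A\otimes\CD\to A$ that you feed into the intertwining do not exist as $*$-homomorphisms: reindexing/lifting $\Theta$ along representing sequences only yields maps that are approximately multiplicative on finite sets, whereas the intertwining machine (Definition \ref{equi-approx-int}, \cite[2.3.2]{Rordam}) requires honest $*$-homomorphisms; producing a genuine $*$-homomorphism $A\otimes\CD\to A$ at a finite stage is essentially as hard as the theorem itself. Second, and more seriously, the two halves of your diagram are not shown to be compatible: the half-flip argument shows that $z^*xz$ is $\eps$-close to \emph{some} element of $A\otimes\eins_\CD$, not that it is close to $\iota_A(\psi(x))$ for the specific $\psi$ extracted from $\Theta$. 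Without that identification the approximate commutativity of the two-sided diagram — which is what makes the telescoping limits mutually inverse — is unproved, and it does not follow from the two facts you establish separately. You have correctly located this as "the technical heart," but deferring it is deferring the entire content of the converse; the clean repair is to abandon the two-sided architecture, keep only the half-flip unitaries, and invoke the one-sided Lemma \ref{one side} (with trivial $G$), exactly as in the proof of \ref{equi mcduff}/\ref{equi-McDuff}.
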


We note that although Toms-Winter's criterion \cite[2.3]{TomsWinter07} is not directly stated in these terms, it is equivalent to the above criterion.
It is also important to note that a precurser to this result has been known before due to R{\o}rdam \cite[7.2.2]{Rordam}, and his result recovers the above theorem in the unital case. Characterizing $\CD$-stability in this manner has a number of interesting consequences, for instance permanence properties for the class of $\CD$-stable \cstar-algebras, which were studied both in \cite{TomsWinter07} and \cite{Kirchberg04}. The reason for calling this a \emph{McDuff-type} absorption result comes from a famous result of McDuff \cite{McDuff70}, which characterizes when a II${}_{1}$-factor absorbs the hyperfinite II${}_{1}$-factor tensorially:

\begin{theoreme}
Let $M$ be a $\mathrm{II}_1$-factor with separable predual. Let $\CR$ be the hyperfinite $\mathrm{II}_1$-factor. Let $\omega$ be a free ultrafilter on $\IN$. Then the following are equivalent:
\begin{enumerate}[label=\textup{(\roman*)}, leftmargin=*]
\item $M\cong M\bar{\otimes}\CR$.
\item $M^\omega\cap M'$ contains a unital copy of $\CR$.
\item $M^\omega\cap M'$ contains a unital copy of $M_k$, for some $k\in\IN$.
\end{enumerate}
\end{theoreme}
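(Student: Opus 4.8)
The plan is to run the cycle of implications (i) $\Rightarrow$ (ii) $\Rightarrow$ (iii) $\Rightarrow$ (i); the first two are routine, and the last carries all the content. Throughout, $M^\omega$ denotes the tracial (Ocneanu) ultrapower, and all embeddings are unital and trace-preserving, so that $\|\cdot\|_2$-continuity and normality come for free. For (i) $\Rightarrow$ (ii), I would use $\CR\cong\overline{\bigotimes}_{n\in\IN}\CR$ to upgrade $M\cong M\bar{\otimes}\CR$ to $M\cong M\bar{\otimes}\overline{\bigotimes}_{n\in\IN}\CR$. Writing $\CR_n\subseteq M$ for the copy of $\CR$ in the $n$-th tensor factor, the $\CR_n$ pairwise commute and commute with the distinguished copy of $M$; since any fixed $x\in M$ is $\|\cdot\|_2$-approximated by elements supported on $M$ together with finitely many factors, $x$ asymptotically commutes with $\CR_n$ as $n\to\infty$. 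Fixing isomorphisms $\iota_n\colon\CR\xrightarrow{\cong}\CR_n$, the map $\theta(y)=[(\iota_n(y))_n]$ is a unital embedding $\CR\hookrightarrow M^\omega$ whose image, by the asymptotic commutation just noted, lies in $M^\omega\cap M'$. For (ii) $\Rightarrow$ (iii), I simply restrict such a copy of $\CR$ to the unital subfactor $M_2\subseteq\CR$.

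The substance is (iii) $\Rightarrow$ (i), which I would split into a \emph{reindexation} step and an \emph{intertwining} step. For the reindexation, set $Q:=M^\omega\cap M'$ and promote the single unital copy of $M_k$ in $Q$ back to a unital copy of $\CR=\overline{\bigotimes}_{j\in\IN}M_k$ in $Q$. Represent the given embedding by an asymptotically central sequence of unital $*$-homomorphisms $\pi_n\colon M_k\to M$ (finite-dimensionality lets me correct approximate matrix units to honest ones in $\|\cdot\|_2$). I would then build countably many mutually commuting central copies by induction: once representatives in $M$ of the first $m$ copies are fixed, the hypothesis furnishes a further central sequence whose image asymptotically commutes with every element of $M$, in particular with these finitely many representatives. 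A diagonal argument over $\omega$, cleanly packaged through Kirchberg's $\eps$-test, realizes all copies simultaneously as one asymptotically central, mutually commuting family, i.e.\ a unital embedding $\overline{\bigotimes}_{j}M_k=\CR\hookrightarrow Q$.

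For the intertwining step I would deduce $M\cong M\bar{\otimes}\CR$ from this central copy $\theta\colon\CR\hookrightarrow M^\omega\cap M'$. Because $\theta(\CR)$ commutes with $M$, the formula $x\otimes y\mapsto x\,\theta(y)$ defines a unital $*$-homomorphism $\Theta\colon M\bar{\otimes}\CR\to M^\omega$ restricting to the constant-sequence inclusion on $M$. Using $\CR\cong\overline{\bigotimes}_{\IN}\CR$ to split off a fresh tensor factor at each stage, I would set up a two-sided approximate intertwining
\[
M\bar{\otimes}\CR\ \xrightarrow{\ \Phi_n\ }\ M\ \xrightarrow{\ \Psi_n\ }\ M\bar{\otimes}\CR
\]
of unital trace-preserving maps (the $\Phi_n$ extracted from $\Theta$, the $\Psi_n$ from the inclusion $x\mapsto x\otimes 1$) satisfying $\Psi_n\Phi_n\to\id_{M\bar{\otimes}\CR}$ and $\Phi_{n+1}\Psi_n\to\id_M$ in the pointwise $\|\cdot\|_2$ topology. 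The approximate-intertwining theorem for separable tracial von Neumann algebras then assembles these into mutually inverse normal isomorphisms, giving $M\bar{\otimes}\CR\cong M$.

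The main obstacle is the bookkeeping that keeps both steps honest. In the reindexation one must guarantee that each newly extracted copy commutes with \emph{all} previously chosen ones while remaining asymptotically central, which is exactly the kind of simultaneous approximation the $\eps$-test is designed to control. In the intertwining one must arrange that consecutive maps agree ever more closely on a growing, $\|\cdot\|_2$-dense list of test elements, so that the limits are genuine mutually inverse $*$-isomorphisms rather than merely unital completely positive contractions. Separability of the predual (making the relevant unit balls $\|\cdot\|_2$-separable) together with trace preservation --- which forces every map in sight to be $\|\cdot\|_2$-isometric and hence normal --- is what makes both diagonal arguments converge to the desired normal isomorphism.
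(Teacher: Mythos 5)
This theorem is quoted in the introduction purely as classical background, with a citation to McDuff's original paper; the present paper contains no proof of it, so there is nothing internal to compare your argument against. Judged on its own, your outline follows the standard route ((i)$\Rightarrow$(ii) via the infinite tensor decomposition $\CR\cong\overline{\bigotimes}_\IN\CR$, (ii)$\Rightarrow$(iii) by restriction, (iii)$\Rightarrow$(i) by reindexation plus intertwining), and the first three quarters of it are essentially correct: the asymptotic commutation in (i)$\Rightarrow$(ii), the correction of approximate matrix units, and the inductive/reindexation construction of a unital copy of $\overline{\bigotimes}_j M_k\cong\CR$ in $M^\omega\cap M'$ are all standard and workable (one should also note that $\Theta$ extends to the von Neumann tensor product because $\tau_\omega(x\,\theta(y))=\tau(x)\tau_\omega(\theta(y))$, which uses that $M$ is a factor, and that $k\geq 2$ is implicitly assumed in (iii)).

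The genuine gap is in the final intertwining step. With $\Psi_n(x)=x\otimes 1$ the uncorrected inclusion and $\Phi_n$ extracted from $\Theta$ (so $\Phi_n(x\otimes y)=x\,\theta^{(n)}(y)$ for representatives $\theta^{(n)}$), the composition $\Psi_n\Phi_n$ sends $1\otimes y$ to $\theta^{(n)}(y)\otimes 1$, which does not converge to $1\otimes y$ in $\|\cdot\|_2$ for any natural choice of representatives; so the two-sided approximate intertwining as you state it cannot be set up with these maps. What is missing is the structural input that makes any intertwining work here: the (approximately) inner half-flip of $\CR$, i.e.\ the fact that the two embeddings $\id_\CR\otimes 1$ and $1\otimes\id_\CR$ of $\CR$ into $\CR\bar\otimes\CR$ are approximately unitarily equivalent in $\|\cdot\|_2$ (available because each $M_k^{\otimes n}$ has honestly inner flip). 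Pushing such half-flip unitaries through the map $\CR\bar\otimes\CR\bar\otimes M\to (M\bar\otimes\CR)^\omega$ built from your central copy produces unitaries $u_n\in M\bar\otimes\CR$ that almost commute with $M\otimes 1$ and conjugate $M\bar\otimes\CR$ approximately into $M\otimes 1$; a \emph{one-sided} intertwining (the von Neumann analogue of Lemma \ref{one side} and the mechanism of Theorem \ref{equi mcduff} in this paper) then shows $x\mapsto x\otimes 1$ is approximately unitarily equivalent to an isomorphism. Without invoking the half-flip (or an equivalent almost-centrality argument that manufactures the correcting unitaries), the implication (iii)$\Rightarrow$(i) is not established: a unital copy of an arbitrary II$_1$ factor $N$ in $M^\omega\cap M'$ does not in general yield $M\cong M\bar\otimes N$, so some special property of $\CR$ must enter, and your write-up never identifies or uses it.
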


In this paper, we introduce strongly self-absorbing actions of locally compact groups on \cstar-algebras, extending the notion of a strongly self-absorbing \cstar-algebra to the setting of \cstar-dynamical systems. Our main result is the following equivariant McDuff-type absorption result for strongly self-absorbing actions:

\begin{samepage}
\begin{theoreme}[cf.~\ref{equi-McDuff}]
Let $A$ be a separable \cstar-algebra, and let $\CD$ be a separable, unital \cstar-algebra. Let $\alpha: G\curvearrowright A$ be an action and $\gamma: G\curvearrowright\CD$ a strongly self-absorbing action.
Then $(A,\alpha)$ is cocycle conjugate to $(A\otimes\CD,\alpha\otimes\gamma)$ if and only if there exists a unital, equivariant $*$-homomorphism from $\CD$ to $F_\infty(A)$. (Here, $F_\infty(A)$ is endowed with the $G$-action that is induced by component-wise application of $\alpha$ to representing sequences.) 
\end{theoreme}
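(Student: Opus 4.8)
The plan is to reduce both implications to an equivariant intertwining argument of Evans--Kishimoto type, played between $(A,\alpha)$ and $(A\otimes\CD,\alpha\otimes\gamma)$, with the first-factor embedding $a\mapsto a\otimes\eins$ as the fixed ``honest'' leg. Throughout I regard $F_\omega(A)$ as a (generally point-norm discontinuous) $G$-algebra under the componentwise $\alpha$-action and read ``equivariant'' in that sense. The ``only if'' direction I expect to be comparatively soft, and the ``if'' direction to carry all the weight.

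For the ``only if'' direction, I would first record as a preliminary that a cocycle conjugacy $(A,\alpha)\cc(A\otimes\CD,\alpha\otimes\gamma)$ induces a $G$-equivariant isomorphism $F_\omega(A)\cong F_\omega(A\otimes\CD)$, since conjugating by a cocycle-perturbed automorphism acts trivially on the corrected central sequence algebra up to this identification. It then suffices to exhibit a unital equivariant $*$-homomorphism $\CD\to F_\omega(A\otimes\CD)$. Because $\gamma$ is strongly self-absorbing, the second-factor embedding already yields a unital equivariant $*$-homomorphism $\CD\to F_\omega(\CD)$; composing with the natural equivariant map $F_\omega(\CD)\to F_\omega(A\otimes\CD)$ coming from $d\mapsto\eins_A\otimes d$ (with $\eins_A$ taken in the multiplier algebra) produces the required homomorphism.

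For the ``if'' direction, fix a unital equivariant $*$-homomorphism $\varphi\colon\CD\to F_\omega(A)$ and choose, for each finite set and tolerance, a completely positive contractive lift, obtaining maps $\varphi_n\colon\CD\to A$ that are asymptotically multiplicative, asymptotically central, asymptotically equivariant, and with $\varphi_n(\eins_\CD)$ acting asymptotically as a unit on $A$. Set $\phi_n\colon A\to A\otimes\CD$, $a\mapsto a\otimes\eins$, which is genuinely equivariant and multiplicative, and $\psi_n\colon A\otimes\CD\to A$, $a\otimes d\mapsto a\,\varphi_n(d)$, which is asymptotically multiplicative and asymptotically equivariant. Then $\psi_n\circ\phi_n\to\id_A$ pointwise because $\varphi$ is unital. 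The real content lies in $\phi_n\circ\psi_n$, where $a\,\varphi_n(d)\otimes\eins$ must be compared with $a\otimes d$: this forces a unitary intertwining of the approximately central copy $\varphi_n(\CD)\otimes\eins$ with the genuine tensor factor $\eins_A\otimes\CD$, inside the relative commutant of $A\otimes\eins$. This is exactly where the strong self-absorption of $\gamma$ enters, through its approximately inner half-flip implemented by unitaries that are approximately fixed by $\gamma\otimes\gamma$: it supplies unitaries $w_n$ in a unitisation of $A\otimes\CD$ with $\ad(w_n)\circ\phi_n\circ\psi_n\to\id_{A\otimes\CD}$ pointwise, the associated cocycles staying under control.

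Feeding these approximately intertwining, approximately equivariant maps into the equivariant intertwining machinery should then extract a genuine cocycle conjugacy $(A,\alpha)\cc(A\otimes\CD,\alpha\otimes\gamma)$. The main obstacle, and the part requiring the most care, is the equivariant and cocycle bookkeeping: because $G$ is only locally compact, the induced action on $F_\omega(A)$ need not be continuous, so the equivariance of $\varphi$ must be used together with a uniform-on-compacta control of the defect $g\mapsto\alpha_g(\varphi_n(d))-\varphi_n(d)$, while the unitaries $w_n$ must be chosen asymptotically $\gamma$-invariant so that the limiting automorphisms assemble into a cocycle conjugacy rather than a mere conjugacy. Ensuring that all these approximations can be made simultaneously compatible, so that the telescoping in the intertwining converges to an honest equivariant isomorphism correctly absorbing the cocycles, is the crux of the argument.
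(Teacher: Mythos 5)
Your overall strategy is the right one and matches the paper's in its key mechanism: the tensorial absorption is forced by the approximately $G$-inner half-flip of $(\CD,\gamma)$, applied to the commuting pair consisting of the approximately central copy of $\CD$ coming from the lifts of $\varphi$ and the genuine tensor factor $\eins_A\otimes\CD$, together with a uniform-on-compacta control of the equivariance defect (which the paper obtains via a Baire category argument, Lemmas \ref{sequence-lift} and \ref{approx-fixed}); your ``only if'' direction is also essentially the paper's. However, there are two genuine gaps in the ``if'' direction. First, your backward maps $\psi_n\colon A\otimes\CD\to A$, $a\otimes d\mapsto a\,\varphi_n(d)$, are only asymptotically multiplicative c.p.c.\ maps, not $*$-homomorphisms, and there is in general no way to correct them to homomorphisms ($\CD$ is not semiprojective). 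The two-sided approximate intertwining machinery (\ref{equi-approx-int}, \ref{equi-Ell}) takes honest $*$-homomorphisms as input, so your diagram cannot be fed into it as stated. The paper avoids this entirely by running a \emph{one-sided} intertwining (Lemma \ref{one side}): one fixes the genuine embedding $\eins_\CD\otimes\id_A$ and produces only a sequence of unitaries $z_n$ satisfying the three conditions \ref{eq:oneside-a}--\ref{eq:oneside-c}; the condition $\dist(z^*bz,\phi(A))\leq\eps$ is precisely what replaces the surjectivity your $\psi_n$ were meant to supply. Your architecture would need to be replaced by this (or by a substantially more delicate intertwining for almost multiplicative maps).

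Second, and more seriously in the non-unital case covered by the statement: the unitary implementing the half-flip lives a priori only in the quotient $F(\eins_\CD\otimes A,(\CD\otimes A)_\omega)=\big(((\CD\otimes A)^\sim)_\omega\cap(\eins_\CD\otimes A)'\big)/\ann(\eins_\CD\otimes A,\cdot)$, and to obtain your $w_n$ as honest unitaries in $(\CD\otimes A)^\sim$ one must lift a unitary through this quotient. Such a lift exists only when the unitary is homotopic to $\eins$ in the quotient. The paper arranges this by showing that the approximate half-flip of a strongly self-absorbing system can be implemented by commutators of approximately invariant unitaries (Proposition \ref{commutator-unitaries}), hence by unitaries in $\CU_0(\CD\otimes\CD)$ via $K_1$-injectivity (Corollary \ref{1-hom-unitaries}). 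This is exactly the obstruction that produced the $K_1$-injectivity hypotheses in \cite{Kirchberg04} and \cite{TomsWinter07}, and your proposal does not address it; without it the passage from the quotient back to representing sequences of unitaries, and hence the whole intertwining, does not go through. Finally, note that the correct target in condition (iii) is the $\tilde\alpha_\omega$-continuous part $F_{\omega,\alpha}(A)$; for an equivariant unital map out of the point-norm continuous system $(\CD,\gamma)$ this is automatic, but it must be used explicitly when invoking the uniform-on-compacta lifting lemmas.
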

\end{samepage}

The main result \ref{equi-McDuff} is in fact more general than stated here, because we also consider cocycle actions $(\alpha,u): G\curvearrowright A$ for possibly non-trivial 2-cocycles $u$. In this context, cocycle conjugacy turns out to be equivalent to strong cocycle conjugacy.

One should note that special kinds of equivariant tensorial absorption theorems have already existed in the theory of von Neumann algebras for a long time. For instance, in Ocneanu's classification paper \cite{Ocneanu85}, an important step consisted in showing that any pointwise outer action of an amenable group on the hyperfinite $\mathrm{II}_1$-factor $\CR$ absorbs the trivial $G$-action on $\CR$ tensorially.

Moreover, one should also note that the above equivariant McDuff-type absorption result in the \cstar-setting is at least folklore for cocycle actions of discrete groups. For finite group actions, Izumi has alluded to special cases of such an absorption theorem in \cite{Izumi04}.
As an important tool, it has been used to prove equivariant absorption theorems by Izumi-Matui in \cite{IzumiMatui10}, Goldstein-Izumi in \cite{GoldsteinIzumi11}, and Matui-Sato in \cite{MatuiSato12_2, MatuiSato14}. 

Let us now summarize how this paper is organized. 
In the first section, we remind the reader of some standard definitions from the literature, introduce terminology and recall some sequence algebra techniques.

In the second section, we prove a preliminary version of the main result, which only treats the unital case and is easier to prove. In order to do this, we start the section by proving an equivariant version of a well-known one-sided intertwining argument, which will also be crucial in the later sections.
In the second section, we must also deal with an important technical obstacle: If one wishes to apply a typical sequence algebra argument (commonly refered to as the \emph{reindexation trick}) in the equivariant context, a non-discrete group may cause problems. This stems from the fact that for a non-discrete group $G$ and a continuous action $\alpha: G\curvearrowright A$ on a \cstar-algebra, the induced (algebraic) action $\alpha_\infty: G\curvearrowright A$ on the sequence algebra will typically fail to be point-norm continuous. Moreover, given an element $x\in A_\infty$ satisfying some desirable dynamical property involving the action $\alpha_\infty$, it is at first unclear what kind of \emph{approximate} version of this property holds along a representing sequence $(x_n)_n\in\ell^\infty(\IN,A)$ of $x$. After all, naively lifting relations to the representing sequence will only allow keeping track of some property on finitely many group elements of $G$ at a time, and this is not good enough. This may be even more troublesome for a representing sequence of some element $x\in F_\infty(A)$ in the central sequence algebra of $A$.
However, we solve this problem by borrowing an idea of Guentner-Higson-Trout \cite{GuentnerHigsonTrout} to apply a Baire-category argument on representing sequences, which (simply put) enables us to keep track of a property on compact subsets of the group at a time, instead of only finite sets.

In the third section, we introduce and study strongly self-absorbing actions for a second-countable, locally compact group. We first establish some equivariant generalizations of some partial results from \cite{TomsWinter07}, and then prove the main result (see above), which is an equivariant McDuff-type absorption theorem for strongly self-absorbing actions. 

In the fourth section, we study a property for group actions, which is a slight weakening of being strongly self-absorbing; we will call such actions semi-strongly self-absorbing. As it turns out, actions that are genuinely semi-strongly self-absorbing can only exist in cases where the acting group is non-compact. However, it has some advantages to consider this weaker notion: Firstly, the property of being semi-strongly self-absorbing is more common and easier to verify than the property of being strongly self-absorbing. Secondly, we will see that the above equivariant McDuff-type theorem can be extended to the case where $\gamma$ is only assumed to be semi-strongly self-absorbing. In particular, one may argue that the class of semi-strongly self-absorbing actions deserves just as much attention as the class of strongly self-absorbing actions, at least from the point of view of tensorial absorption.

In the fifth section, we will discuss non-trivial examples of (semi-)strongly self-absorbing actions on \cstar-algebras, and present some open problems regarding certain classes of model actions.

I am confident that the main results and techniques of this paper will become important tools in some kind of equivariant Elliott program for nice group actions on classifiable \cstar-algebras. Given the astounding importance that strongly self-absorbing \cstar-algebras have in the Elliott program, I expect the same to become true for (semi-)strongly self-absorbing actions in the classification theory of group actions.

I am very grateful to Hiroki Matui for showing me a proof of a variant of the important technical Lemma \ref{one side}, and for some helpful discussions.

\begin{reme}
This is an arxiv-exclusive version of the article, uploaded after it was published in Transactions of the American Mathematical Society, 2018, volume 370, pages 99--130.
It fixes a mistake that made it to the published version. 
A claim formerly included as ``Corollary 1.16'' was false as stated: It claimed that under certain assumptions on \cstar-dynamical systems, one can conclude that they are conjugate, but in general one may only conclude that they are (strongly) cocycle conjugate. (The correct version of this statement will be made rigorous in forthcoming work.)
Various proofs in this version are slighty modified as to not need that type of statement in the first place, and hence the main results remain true as they were stated in the original version.
It is my intention to submit these corrections in the form of a corrigendum to the journal. 

This should perhaps be considered as a \emph{bare minimum} revision:
Other than implementing such corrections and updating the original references, no effort was undertaken to make further improvements on style, notation, exposition, or simplifying proofs.
\end{reme}


\section{Preliminaries}

\begin{nota}
Unless specified otherwise, we will stick to the following notational conventions in this paper:
\begin{itemize}
\item $A$ and $B$ denote \cstar-algebras.
\item $\CM(A)$ denotes the multiplier algebra of $A$ and $\tilde{A}$ denotes the smallest unitalization of $A$.
\item Let $m_\lambda\in\CM(A)$ be a net and $m\in\CM(A)$ an element. If $m_\lambda$ converges to $m$ in the strict topology, i.e.~$m_\lambda\cdot a\to m\cdot a$ and $a\cdot m_\lambda\to a\cdot m$ in norm for every $a\in A$, then we write $m_\lambda\strict m$.
\item $G$ denotes a second-countable, locally compact group.
\item The symbol $\alpha$ is used either for a continuous action $\alpha: G\curvearrowright A$ or more generally for a point-norm continuous map $\alpha: G\to\Aut(A)$, e.g.~if $\alpha$ belongs to a cocycle action, as defined below. 
By slight abuse of notation, we will also write $\alpha: G\to\Aut(\CM(A))$ for the unique strictly continuous extension.
\item If $\alpha: G\curvearrowright A$ is an action, then $A^\alpha$ denotes the fixed-point algebra of $A$.
\item If $(X,d)$ is some metric space with elements $a,b\in X$, then we write $a=_\eps b$ as a shortcut for $d(a,b)\leq\eps$.
\end{itemize}
\end{nota}

\begin{defi}[cf.~{\cite[2.1]{BusbySmith70} or \cite[2.1]{PackerRaeburn89}}] 
\label{cocycle actions} 
A cocycle action $(\alpha,u): G\curvearrowright A$ is a point-norm continuous map $\alpha: G\to\Aut(A),~ g\mapsto\alpha_g$ together with a strictly continuous map $u: G\times G\to\CU(\CM(A))$ satisfying the properties
\begin{itemize}
\item $\alpha_1=\id_A$
\item $\alpha_r\circ\alpha_s = \ad(u(r,s))\circ\alpha_{rs}$
\item $u(1,t)=u(t,1)=\eins$
\item $\alpha_r(u(s,t))\cdot u(r,st) = u(r,s)\cdot u(rs,t)$
\end{itemize}
for all $r,s,t\in G$.
\end{defi}

\begin{reme}
If $u=\eins$ is trivial, then this just recovers the definition of an ordinary action $\alpha: G\curvearrowright A$. 
\end{reme}

\begin{defi}[cf.~{\cite[3.2]{PackerRaeburn89}} for (i)+(ii)]
Let $\alpha: G\curvearrowright A$ be an action. Consider a strictly continuous map $w: G\to\CU(\CM(A))$.
\begin{enumerate}[label=(\roman*),leftmargin=*] 
\item $w$ is called an $\alpha$-1-cocycle, if one has $w_g\alpha_g(w_h)=w_{gh}$ for all $g,h\in G$.
In this case, the map $\alpha^w: G\to\Aut(A)$ given by $\alpha_g^w=\ad(w_g)\circ\alpha_g$ is again an action, and is called a cocycle perturbation of $\alpha$. Two $G$-actions on $A$ are called exterior equivalent if one of them is a cocycle perturbation of the other.
\item Assume that $w$ is an $\alpha$-1-cocycle. It is called a coboundary, if there exists a unitary $v\in\CU(\CM(A))$ with $w_g = v\alpha_g(v^*)$ for all $g\in G$.
\item Assume that $w$ is an $\alpha$-1-cocycle. It is called an approximate coboundary, if there exists a sequence of unitaries $x_n\in\CU(\CM(A))$ such that $x_n\alpha_g(x_n^*) \strict w_g$ for all $g\in G$ and uniformly on compact sets. Two $G$-actions on $A$ are called strongly exterior equivalent, if one of them is a cocycle perturbation of the other via an approximate coboundary.
\end{enumerate}
\end{defi}

\begin{defi}[cf.~{\cite[3.1]{PackerRaeburn89}} for (i)]
Let $(\alpha,u), (\beta,w): G\curvearrowright A$ be two cocycle actions. 
\begin{enumerate}[label=(\roman*),leftmargin=*] 
\item The pairs $(\alpha,u)$ and $(\beta,w)$ are called exterior equivalent, if there is a strictly continuous map $v: G\to\CU(\CM(A))$ satisfying $\beta_g=\ad(v_g)\circ\alpha_g$ and $w(s,t)=v_s\alpha_s(v_t)u(s,t)v_{st}^*$ for all $g,s,t\in G$.
\item The pairs $(\alpha,u)$ and $(\beta,w)$ are called strongly exterior equivalent, if there is a map $v: G\to\CU(\CM(A))$ as above such that there is a sequence of unitaries $x_n\in\CU(\CM(A))$ with $x_n\alpha_g(x_n^*) \strict v_g$ for all $g\in G$ and uniformly on compact sets. 
\end{enumerate}
\end{defi}

\begin{defi}
Let $(\alpha,u): G\curvearrowright A$ and $(\beta,w): G\curvearrowright B$ be two cocycle actions. A non-degenerate $*$-homomorphism $\phi: A\to B$ is called equivariant, and is written $\phi: (A,\alpha,u)\to (B,\beta,w)$, if one has $\beta_g\circ\phi=\phi\circ\alpha_g$ and $w(s,t)=\phi(u(s,t))$ for all $g,s,t\in G$.

If $u(\cdot,\cdot)=w(\cdot,\cdot)=\eins$, i.e.~one has actions $\alpha: G\curvearrowright A$ and $\beta: G\curvearrowright B$, then we omit the cocycles in the notation and write $\phi: (A,\alpha)\to (B,\beta)$.
\end{defi}

Let us now recall the notions of conjugacy, cocycle conjugacy and strong cocycle conjugacy. The latter notion is somewhat less prominent in the literature, and was first explicitely introduced by Izumi-Matui in \cite[2.1]{IzumiMatui10} as a relation between discrete group actions. In \cite[2.1(3)]{MatuiSato14}, Matui-Sato extended this notion to be a relation between cocycle actions of discrete groups. It is important to note that implicitely, the notion of strong cocycle conjugacy has played a role in earlier work of Kishimoto, see \cite{Kishimoto95, Kishimoto96, EvansKishimoto97, Kishimoto98, Kishimoto98II}.

We are going to consider a suitable generalization of strong cocycle conjugacy to cocycle actions of locally compact groups. We note, however, that the definition given below only extends the definition from \cite{IzumiMatui10, MatuiSato14} in the case of unital \cstar-algebras, and is weaker in the non-unital case.

\begin{defi} 
Two cocycle actions $(\alpha,u): G\curvearrowright A$ and $(\beta,w): G\curvearrowright B$ are called 
\begin{enumerate}[label=(\roman*),leftmargin=*]
\item conjugate, if there is an equivariant isomorphism $\phi: (A,\alpha,u)\to (B,\beta,w)$.  In this case, we write $(A,\alpha,u)\cong (B,\beta,w)$.
\item cocycle conjugate, if there is an isomorphism $\phi: A\to B$ such that $(\phi\circ\alpha\circ\phi^{-1}, \phi\circ u)$ is exterior equivalent to $(\beta,w)$. In this case, we write $(A,\alpha,u)\cc (B,\beta,w)$.
\item strongly cocycle conjugate, if there is an isomorphism $\phi: A\to B$ such that $(\phi\circ\alpha\circ\phi^{-1}, \phi\circ u)$ is strongly exterior equivalent to $(\beta,w)$. In this case, we write $(A,\alpha,u)\scc (B,\beta,w)$. 
\end{enumerate}
If $\alpha$ and $\beta$ are genuine actions, we omit the cocycles from the notation and write $(A,\alpha)\cong (B,\beta)$, $(A,\alpha)\cc (B,\beta)$, or $(A,\alpha)\scc (B,\beta)$.
\end{defi}

As we have mentioned before, the above given definition of strong cocycle conjugacy does not extend the definition from \cite{IzumiMatui10, MatuiSato14} in the case of non-unital \cstar-algebras. Our definition is weaker, and the difference is that the approximate coboundary condition is expressed in the strict topology rather than the norm topology.

As was outlined in the introduction, we aim to characterize equivariant absorption of certain actions via a property of the central sequence algebra. In particular, we shall spend most of our effort on studying sequence algebras, (relative) central sequence algebras and actions induced on them. Let us begin by recalling the definition of a \emph{corrected} relative central sequence algebra, i.e.~a quotient of a relative central sequence algebra by an Annihilator ideal. This idea originates from Kirchberg's important work \cite{Kirchberg04} on central sequences of \cstar-algebras.

\begin{defi}[cf.~{\cite[1.1]{Kirchberg04}}] 
Let $A$ be a \cstar-algebra. 
Consider the sequence algebra of $A$ as the quotient
\[
A_\infty = \ell^\infty(\IN,A)/\set{ (x_n)_n ~|~ \lim_{n\to\infty}\| x_n\|=0}.
\]
There is a standard embedding of $A$ into $A_\infty$ by sending an element to its constant sequence.
For some \cstar-subalgebra $B\subset A_\infty$, the relative central sequence algebra is defined as
\[
A_\infty\cap B' = \set{x\in A_\infty ~|~ xb=bx~\text{for all}~b\in B}.
\]
Observe that the two-sided annihilator
\[
\ann(B,A_\infty) = \set{x\in A_\infty ~|~ xb=bx=0~\text{for all}~b\in B}
\]
is an ideal in $A_\infty\cap B'$, and one can thus define
\[
F(B,A_\infty) = A_\infty\cap B'/\ann(B,A_\infty).
\]
Of particular importance is the central sequence algebra of $A$, i.e.~$A_\infty\cap A'$, or $F_\infty(A)=F(A,A_\infty)$, or the relative central sequence algebra $\CM(A)_\infty\cap A'$.
\end{defi}

\begin{rem} 
\label{induced-maps-and-unitaries}
If $\phi\in\Aut(A)$ is an automorphism, then component-wise application on representative sequences yields a well-defined automorphism $\phi_\infty\in\Aut(A_\infty)$ or $\phi_\infty\in\Aut(\CM(A)_\infty)$. If $B\subset A_\infty$ is $\phi_\infty$-invariant, then $\phi_\infty$ restricts to a well-defined automorphism on $A_\infty\cap B'$ and $\ann(B,A_\infty)$. This further induces an automorphism $\tilde{\phi}_\infty$ on $F(B,A_\infty)$.

In the special case that $\phi=\ad(u)$ is an inner automorphism for some $u\in\CM(A)$ and we have $uB+Bu\subset B$, the induced automorphism $\tilde{\phi}_\infty$ on $F(B, A_\infty)$ is the identity map.
\end{rem}
\begin{proof}
We have $u\cdot b, b\cdot u\in B$ for all $b\in B$ by assumption. Given $x\in A_\infty\cap B'$ and $b\in B$, we thus have
\[
(uxu^*)b = ux(u^*b) = u(u^*b)x = bx = xb.
\]
But this implies that $x-uxu^*\in\ann(B,A_\infty)$, which shows that $\ad(u)$ indeed acts trivially on $F(B,A_\infty)$.
\end{proof}

\begin{nota}
Let $(\alpha,u): G\curvearrowright A$ be a cocycle action. 
In view of \ref{induced-maps-and-unitaries}, we may consider the induced map $\alpha_\infty: G\to\Aut(A_\infty),~g\mapsto \alpha_{\infty,g}:=(\alpha_g)_\infty$. In general, this map is not point-norm continuous. So let
\[
A_{\infty,\alpha} = \set{ x\in A_\infty ~|~ [g\mapsto\alpha_{\infty,g}]~\text{is continuous} }
\]
be the \cstar-subalgebra of elements in $A_\infty$ on which this map acts continuously.
\end{nota}

\begin{rem}
\label{cont-central-seq}
Let $G$ be a second-countable, locally compact group. Let $(\alpha,u): G\curvearrowright A$ be a cocycle action. Assume that $B\subset A_\infty$ is a \cstar-subalgebra that is invariant under multiplication with $\set{u(s,t)}_{s,t\in G}$ and under applying $\set{\alpha_{\infty,s}}_{s\in G}$. Then it follows from \ref{induced-maps-and-unitaries} that the induced family of automorphisms $\set{\tilde{\alpha}_{\infty,g}}_{g\in G}$ defines a genuine $G$-action on $F(B,A_\infty)$. However, this action will in general fail to be point-norm continuous. We can thus consider the continuous part
\[
F_\alpha(B,A_\infty) = \set{ x\in F(B,A_\infty) ~|~ [G\ni g\mapsto \tilde{\alpha}_{\infty,g}(x)] ~\text{is continuous} }.
\]
Now let $B$ be another \cstar-algebra and $(\beta,v): G\curvearrowright B$ another cocycle action. Assume that $(A,\alpha,u)$ and $(B,\beta,v)$ are cocycle conjugate via an isomorphism $\psi: A\to B$. Then for every $g\in G$, the automorphisms $\beta_g$ and $\phi\circ\alpha_g\circ\phi^{-1}$ agree up to an inner automorphism, so their induced maps on $F_\infty(B)$ are the same. In particular, component-wise application of $\phi$ on representing sequences yields an equivariant isomorphism
\[
\big( F_\infty(A), \tilde{\alpha}_\infty \big) ~\cong~ \big( F_\infty(B), \tilde{\beta}_\infty \big).
\]
\end{rem}

\begin{rem}
\label{F(A)-enhanced}
If $B\subset A_\infty$ is a \cstar-subalgebra, then obviously $\ann(B,A_\infty)=\ann(B,\CM(A)_\infty)\cap A_\infty$ and $A_\infty\cap B'= (\CM(A)_\infty\cap B')\cap A_\infty$. So we have a natural embedding $F(B,A_\infty)\to F(B,\CM(A)_\infty)$. If $B$ is $\sigma$-unital, then this embedding is an isomorphism.

The same is true if we replace $\CM(A)$ by $\tilde{A}$.
\end{rem}
\begin{proof}
Let $\tilde{x}=x+\ann(B,\CM(A)_\infty)\in F(B,\CM(A)_\infty)$ for some $x\in\CM(A)_\infty\cap B'$ be represented by a bounded sequence $x_n\in\CM(A)$. Since $B$ is $\sigma$-unital, let $h\in B$ be a strictly positive element. Let $h$ be represented by a bounded sequence $(h_n)_n$ in $A$. Using that $A$ is strictly dense in $\CM(A)$, we choose elements $y_n\in A$ with $\|y_n\|\leq\|x_n\|$ satisfying
\[
x_n h_n =_{1/n} y_n h_n~\text{and}~ h_n x_n =_{1/n} h_n y_n\quad\text{for all}~n\in\IN.
\]
For the resulting bounded sequence and its induced element $y=[(y_n)]\in A_\infty$, we thus have $yh=xh$ and $hy=hx$. But since $h$ is strictly positive in $B$, this implies $yb=xb$ and $by=bx$ for all $b\in B$. In particular,
\[
y\in A_\infty\cap B'~\text{and}~bx=by\quad\text{for all}~b\in B.
\]
But this means that $\tilde{x}=y+\ann(B,\CM(A)_\infty)\in F(B,\CM(A)_\infty)$ is in the image of $F(B,A_\infty)$ under the natural embedding. This shows our claim.
\end{proof}

\begin{rem}[cf.~{\cite[1.9(1)+(4)]{Kirchberg04}}]
\label{kirchberg-projection}
For a \cstar-algebra $A$, we denote $D_{\infty,A}=\quer{A\cdot A_\infty\cdot A}$, the hereditary \cstar-subalgebra of $A_\infty$ generated by $A$. Let $B\subset A$ be a non-degenerate \cstar-subalgebra. Then one checks easily that for any $y\in A_\infty\cap B'$, we have $yD_{\infty,A}+D_{\infty,A}y\subset D_{\infty,A}$.
By the universal property of the multiplier algebra, we have a natural $*$-homomorphism
\[
\pi: A_\infty\cap B' \to \CM( D_{\infty,A}),\quad y\mapsto [x\mapsto yx].
\]
The kernel coincides with $\ann(B,A_\infty)$ by definition, and thus we get a natural embedding
\[
F(B,A_\infty) = (A_\infty\cap B')/\ann(B,A_\infty) \to \CM(D_{\infty,A}).
\]
In particular, we can view $F(B,A_\infty)$ as a subset of $\CM(D_{\infty,A})$ in a natural way. Given $d\in D_{\infty,A}$ and $x\in F(B,A_\infty)$, we can multiply $d\cdot x=d\cdot y$ for some element $y$ with $\pi(y)=x$. Analogously, the product $x\cdot d$ is well-defined, and both give well-defined values in $D_{\infty,A}\subset A_\infty$.
\end{rem}

\begin{defi}
Let $\alpha: G\curvearrowright A$ and $\beta: G\curvearrowright B$ be two actions of a second-countable, locally compact group on separable \cstar-algebras. Let $\phi_1,\phi_2: (A,\alpha)\to (B,\beta)$ be two equivariant $*$-homomorphisms. The maps $\phi_1,\phi_2$ are called approximately $G$-unitarily equivalent, if there is a sequence $v_n\in\CU(\CM(B))$ with $\phi_2 = \lim_{n\to\infty} \ad(v_n)\circ\phi_1$ and $\beta_g(v_n)-v_n \strict 0$ uniformly on compact subsets of $G$. We denote $\phi_1 \ue{G} \phi_2$.
\end{defi}

\section{A one-sided intertwining argument}

\noindent 
The following is an equivariant analogue of a well-known Lemma in R{\o}rdam's book \cite{Rordam}.
A variant of the next Lemma has been shown to me by Matui, and I am grateful to him for kindly allowing me include it in this paper.

\begin{lemma}[cf.~{\cite[2.3.5]{Rordam}}] 
\label{one side}
Let $G$ be a second-countable, locally compact group. Let $(\alpha,u): G\curvearrowright A$ and $(\beta,w): G\curvearrowright B$ be two cocycle actions on separable \cstar-algebras. Let $\phi: (A,\alpha,u)\to (B,\beta,w)$ be an injective, non-degenerate and equivariant $*$-homomorphism. Assume the following:

For every $\eps>0$, compact subset $K\subset G$ and finite subsets $F_A\fin A, F_B\fin B$, there exists a unitary $z\in\CU(\CM(B))$ satisfying
\begin{enumerate}[label=\textup{(\ref*{one side}\alph*)}]
\item $\|[z, \phi(a)]\|\leq\eps$ for every $a\in F_A$. \label{eq:oneside-a}
\item $\dist(z^*bz, \phi(A)) \leq\eps$ for every $b\in F_B$. \label{eq:oneside-b}
\item $b\beta_g(z) =_\eps bz$ for every $g\in K$ and $b\in F_B$. \label{eq:oneside-c}
\end{enumerate}
Then $\phi$ is approximately multiplier unitarily equivalent to an isomorphism $\psi: A\to B$ inducing strong cocycle conjugacy between $(A,\alpha,u)$ and $(B,\beta,w)$.

If moreover $z$ may always be chosen to be a unitary in $\tilde{B}$, then $\phi$ is approximately unitarily equivalent to an isomorphism inducing strong cocycle conjugacy between $(A,\alpha,u)$ and $(B,\beta,w)$.
\end{lemma}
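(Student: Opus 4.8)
The plan is to run an equivariant, one-sided Elliott intertwining in the spirit of \cite[2.3.5]{Rordam}, realizing the isomorphism as a point-norm limit of inner perturbations of $\phi$. Since $G$ is second-countable and locally compact, hence $\sigma$-compact, I would first fix an increasing sequence of compact sets $K_1\subset K_2\subset\cdots$ with $\bigcup_n K_n=G$, increasing finite sets $F_n^A\fin A$ and $F_n^B\fin B$ with dense unions, and a summable sequence $\eps_n>0$. Applying the hypothesis once per stage with parameters combining all three requirements, I would construct unitaries $z_n\in\CU(\CM(B))$ and put $w_n=z_1z_2\cdots z_n$, aiming for $\Phi_n:=\ad(w_n)\circ\phi$ to converge to a $*$-homomorphism $\psi\colon A\to B$. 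Since $w_n=w_{n-1}z_n$, the increment is
\[
\Phi_n(a)-\Phi_{n-1}(a)=w_{n-1}\,[z_n,\phi(a)]\,z_n^*w_{n-1}^*,
\]
of norm exactly $\|[z_n,\phi(a)]\|$; feeding $F_n^A$ into \ref{eq:oneside-a} and using summability of $(\eps_n)$ makes $(\Phi_n)$ Cauchy on a dense set, so $\psi$ exists. As each $\Phi_n$ is isometric, so is $\psi$, whence $\psi$ is injective.

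For surjectivity I would exploit \ref{eq:oneside-b}. Because $w_{n-1}$ is already fixed when $z_n$ is chosen, at stage $n$ I would place the finitely many elements $w_{n-1}^*b\,w_{n-1}$, for $b$ ranging over $F_{n-1}^B$, into the finite subset of $B$ fed to \ref{eq:oneside-b}. This yields $\dist(w_n^*b\,w_n,\phi(A))\le\eps_n$, that is, $\dist(b,\Phi_n(A))\le\eps_n$. The usual intertwining bookkeeping — arranging the sets $F_n^A$ to absorb the approximate preimages that occur, drawn from a fixed dense sequence of $A$ — then forces $\dist(b,\psi(A))$ to be arbitrarily small, so that the closed subalgebra $\psi(A)$ equals $B$. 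Thus $\psi$ is an isomorphism, and $\psi=\lim_n\ad(w_n)\circ\phi$ with $w_n\in\CU(\CM(B))$ exhibits it as approximately multiplier unitarily equivalent to $\phi$; in the \emph{moreover} case, where each $z_n$ may be chosen in $\tilde B$, one has $w_n\in\CU(\tilde B)$ and hence ordinary approximate unitary equivalence.

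The dynamical heart of the argument, which I expect to be the main obstacle, is extracting \emph{strong} cocycle conjugacy from \ref{eq:oneside-c}. Write $\tilde\alpha_g:=\psi\alpha_g\psi^{-1}$ and $\tilde u:=\psi\circ u$ for the cocycle action transported to $B$; equivariance of $\phi$ gives $\beta_g\circ\phi=\phi\circ\alpha_g$, which formally yields $\tilde\alpha_g(\psi(a))=\lim_n\ad(w_n)\phi(\alpha_g(a))$ and $\beta_g(\psi(a))=\lim_n\ad(\beta_g(w_n))\phi(\alpha_g(a))$. This suggests setting $v_g:=\lim_n\beta_g(w_n)w_n^*$ and proving $\beta_g=\ad(v_g)\circ\tilde\alpha_g$. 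The increment is
\[
\beta_g(w_n)w_n^*-\beta_g(w_{n-1})w_{n-1}^* = \beta_g(w_{n-1})\,[\beta_g(z_n)-z_n]\,z_n^*\,w_{n-1}^*,
\]
so after left multiplication by $d\in B$ its norm is bounded by $\|d\beta_g(w_{n-1})[\beta_g(z_n)-z_n]\|$. Here is the genuine difficulty of a non-discrete group: the test element $d\beta_g(w_{n-1})$ varies with $g$, whereas \ref{eq:oneside-c} controls only finitely many test elements at once — but does so for \emph{all} $g$ in the compact set $K_n$ simultaneously. The resolution is that $g\mapsto d\beta_g(w_{n-1})$ is norm-continuous, so $\{d\beta_g(w_{n-1}) : g\in K_n,\ d\in F_{n-1}^B\}$ is norm-compact; I would therefore feed a finite $\eps_n$-net of this set into \ref{eq:oneside-c}. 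Condition \ref{eq:oneside-c} then bounds the increments uniformly for $g\in K_n$, and the symmetric right-multiplication estimate follows by taking the test set adjoint-closed and passing to adjoints. Hence $v_g=\lim_n\beta_g(w_n)w_n^*$ converges strictly, uniformly on compact subsets of $G$, defining a strictly continuous $v\colon G\to\CU(\CM(B))$.

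It remains to see that $v$ witnesses strong exterior equivalence of $(\tilde\alpha,\tilde u)$ and $(\beta,w)$. The relation $\beta_g=\ad(v_g)\circ\tilde\alpha_g$ follows by substituting the strict limit defining $v_g$ into the two displayed limits above and reducing to norm convergence after multiplying by fixed elements of $\phi(A)$, and the $2$-cocycle compatibility $\tilde u(s,t)=v_s^*\beta_s(v_t^*)w(s,t)v_{st}$ is then a direct computation from $\phi\circ u=w$ (equivalently $\tilde u(s,t)=\lim_n\ad(w_n)(w(s,t))$), the cocycle identities, and strict continuity. Finally $v_g^*=\lim_n w_n\beta_g(w_n^*)$, so $v^*$ is an approximate $\beta$-coboundary witnessed by the sequence $x_n=w_n$; since $\tilde\alpha_g=\ad(v_g^*)\circ\beta_g$, this says precisely that $(\tilde\alpha,\tilde u)$ is a cocycle perturbation of $(\beta,w)$ via an approximate coboundary. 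Therefore $\psi$ induces $(A,\alpha,u)\scc(B,\beta,w)$, and the two alternatives of the statement correspond exactly to whether the $z_n$, and hence the $w_n$, are taken in $\CM(B)$ or in $\tilde B$.
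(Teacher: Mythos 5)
Your proposal is correct and follows essentially the same route as the paper: the same inductive choice of unitaries $z_n$ (controlling commutators against a dense sequence of $A$, approximate preimages of a dense sequence of $B$, and the equivariance defect on an exhausting sequence of compact sets), the same limit isomorphism $\psi=\lim_n\ad(z_1\cdots z_n)\circ\phi$, and the same limit cocycle witnessing strong exterior equivalence together with the same $2$-cocycle verification via $\psi(u(s,t))=\lim_n w_n w(s,t)w_n^*$. The only genuine divergence is your choice of coboundary direction: you take $v_g=\lim_n\beta_g(w_n)w_n^*$, whose increments involve the $g$-dependent test elements $d\,\beta_g(w_{n-1})$ and therefore require your compactness/$\eps$-net refinement when invoking \ref{eq:oneside-c}, whereas the paper works with $\lim_n w_n\beta_g(w_n^*)$, whose increments reduce, via the identity $z_n\beta_g(z_n^*)-1=-(\beta_g(z_n)-z_n)\beta_g(z_n^*)$, to the $g$-independent test elements $b_j w_{n-1}$, so that \ref{eq:oneside-c} applies verbatim with no net argument. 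Both variants are valid; the paper's bookkeeping is slightly leaner, while yours correctly identifies and handles the same non-discreteness issue by a compactness argument.
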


\begin{proof}
Let $\set{a_n}_{n\in\IN}\subset A$ and $\set{b_n}_{n\in\IN}\subset B$ be dense sequences. Since $G$ is $\sigma$-compact, write $G=\bigcup_{n\in\IN} K_n$ for an increasing union of compact subsets $1_G\in K_n$. We are going to add unitaries to $\phi$ step by step:

In the first step, choose some $a_{1,1}\in A$ and $z_1\in\CU(\CM(B))$ such that
\begin{itemize}
\item $z_1^*b_1z_1 =_{1/2} \phi(a_{1,1})$
\item $\|[z_1,\phi(a_1)]\|\leq 1/2$.
\item $b_1\beta_g(z_1)=_{1/2} b_1z_1$ for all $g\in K_1$.
\end{itemize}
In the second step, choose $a_{2,1},a_{2,2}\in A$ and $z_2\in\CU(\CM(B))$ such that
\begin{itemize}
\item $z_2^*(z_1^*b_jz_1)z_2 =_{1/4} \phi(a_{2,j})$ for $j=1,2$
\item $\|[z_2,\phi(a_j)]\|\leq 1/4$ for $j=1,2$
\item $\|[z_2,\phi(a_{1,1})]\|\leq 1/4$
\item $(b_jz_1)\beta_{g}(z_2) =_{1/4} (b_jz_1)z_2$ for all $g\in K_2$ and $j=1,2.$
\end{itemize}
Now assume that for some $n\in\IN$, we have found $z_1,\dots,z_n\in\CU(\CM(B))$ and $\set{a_{m,j}}_{j\leq m\leq n}\subset A$ satisfying
\begin{enumerate}[label=({c}\arabic*)]
\item $z_n^*(z_{n-1}^*\cdots z_1^*b_jz_1\cdots z_{n-1})z_n =_{2^{-n}} \phi(a_{n,j})$ for $j\leq n$ \label{eq1:c1}
\item $\|[z_n,\phi(a_j)]\|\leq 2^{-n}$ for $j\leq n$ \label{eq1:c2}
\item $\|[z_n,\phi(a_{m,j})]\|\leq 2^{-n}$ for $m<n$ and $j\leq m$ \label{eq1:c3}
\item $(b_jz_1\cdots z_{n-1})\beta_{g}(z_n) =_{2^{-n}} (b_jz_1\cdots z_{n-1})z_n$ for all $g\in K_n$ and $j\leq n$. \label{eq1:c4}
\end{enumerate}
Then we can again apply our assumptions to find $z_{n+1}\in\CU(\CM(B))$ and $\set{a_{n+1,j}}_{j\leq n+1}$ with
\begin{itemize}
\item $z_{n+1}^*(z_{n}^*\cdots z_1^*b_jz_1\cdots z_{n})z_{n+1} =_{2^{-(n+1)}} \phi(a_{n+1,j})$ for $j\leq n+1$
\item $\|[z_{n+1},\phi(a_j)]\|\leq 2^{-(n+1)}$ for $j\leq n+1$
\item $\|[z_{n+1},\phi(a_{m,j})]\|\leq 2^{-(n+1)}$ for $m<n+1$ and $j\leq m$.
\item $(b_j z_1\cdots z_{n})\beta_{g}(z_{n+1}) =_{2^{-(n+1)}} (b_j z_1\cdots z_{n})z_{n+1}$ for all $g\in K_{n+1}$ and $j\leq n+1$.
\end{itemize}
Carry on inductively. Then we define $\psi_n: A\to B$ by the formula $\psi_n = \ad(z_1\cdots z_n)\circ\phi$. Now let us observe a number of facts:

By condition \ref{eq1:c2}, the sequence $(\psi_n(a_j))_{n\in\IN}$ is Cauchy for all $j\in\IN$. Since the $a_j$ are dense, this implies that the $\psi_n$ converge to some $*$-homomorphism $\psi: A\to B$ in the point-norm topology. Then $\psi$ is clearly approximately multplier unitarily equivalent to $\phi$.

By condition \ref{eq1:c3}, we have for every $n\in\IN$ and $j\leq n$ the estimate
\[
\|\psi(a_{n,j})-\psi_{n}(a_{n,j})\| \leq \sum_{k=n+1}^\infty 2^{-k} = 2^{-n}.
\]
Combined with condition \ref{eq1:c1}, we have for every $n\in\IN$ and $j\leq n$ that
\[
\|b_j-\psi(a_{n,j})\| \leq 2^{-n}+\|b_j-\psi_{n}(a_{n,j})\| \leq 2^{-n}+2^{-n} = 2^{1-n}.
\]
Since the $\set{b_j}_j\subset B$ are dense, it follows that $\psi(A)$ is dense, hence $\psi$ is surjective. Since it is also injective, it is an isomorphism.

By condition \ref{eq1:c4}, we have that the sequences $b_j\cdot z_1\cdots z_n \beta_g\big( z_n^*\cdots z_1^* \big)$ are Cauchy sequences for every $j\in\IN$ and $g\in G$, with uniformity on compact sets. Since the $b_j$ form a dense subset of $B$ by choice, it follows that every sequence of functions of the form $[g\mapsto b\cdot z_1\cdots z_n\beta_g(z_n^*\cdots z_1^*)]$ (for $b\in B$) converges uniformly on compact sets. Since $\beta$ is point-norm continuous, it follows that the functions 
\[
g\mapsto \beta_g\Big( \beta_{g^{-1}}(b)^*\cdot z_1\cdots z_n\beta_{g^{-1}}(z_n^*\cdots z_1^*) \Big)^* = z_1\cdots z_n\beta_g(z_n^*\cdots z_1^*)\cdot b
\]
must also converge uniformly on compact sets of $G$, for every $b\in B$.

Denote $x_n = z_1\cdots z_n$. It follows that the strict limit $v_g = \lim_{n\to\infty} x_n\beta_g(x_n^*)$ exists for every $g\in G$, and that this convergence is uniform on compact subsets of $G$. In particular, the assignment $g\mapsto v_g\in\CU(\CM(B))$ is strictly continuous. 

We calculate for each $g\in G$ that
\[
\begin{array}{ccl}
\psi\circ\alpha_g &=& \dst\lim_{n\to\infty} \ad(x_n)\circ\phi\circ\alpha_g \\\\
&=& \dst\lim_{n\to\infty} \ad(x_n) \circ \beta_g \circ \phi \\\\
&=& \dst\lim_{n\to\infty} \ad(x_n)\circ\beta_g\circ\ad(x_n^*)\circ\ad(x_n)\circ\phi \\\\
&=& \dst\lim_{n\to\infty} \ad(x_n\beta_g(x_n^*))\circ\beta_g\circ\ad(x_n)\circ\phi \\\\
&=& \ad(v_g)\circ\beta_g\circ\psi.
\end{array}
\]
Keeping in mind that $\phi,\psi_n: A\to B$ are non-degenerate, we may consider the unique, strictly continuous extentions $\phi, \psi_n: \CM(A)\to\CM(B)$. Observe that we have convergence $\psi_n(m)\strict\psi(m)$ for every $m\in\CM(A)$: Given some $b\in B$ and $\delta>0$, if we choose $n$ large enough such that $\psi_n(m\cdot\psi^{-1}(b))=_\delta \psi(m\cdot\psi^{-1}(b))=\psi(m)b$ and $b=_\delta\psi_n(\psi^{-1}(b))$, then we get
\[
\psi_n(m)b =_\delta \psi_n(m)\cdot\psi_n(\psi^{-1}(b)) = \psi_n(m\cdot\psi^{-1}(b)) =_\delta \psi(m)\cdot b.
\] 
In particular, we see that 
\[
x_n w(s,t) x_n^* = x_n\phi(u(s,t))x_n^* = \psi_n(u(s,t)) \strict \psi(u(s,t))
\]
for every $s,t\in G$.
 
Using this, let us now observe for every $s,t\in G$ that
\[
\begin{array}{ccl}
v_s\beta_s(v_t)w(s,t)v_{st}^* &=& \dst \mathrm{str}\!-\!\!\!\lim_{n\to\infty} x_n\beta_s(x_n^*)\beta_s(x_n\beta_t(x_n^*))w(s,t)(x_n\beta_{st}(x_n^*))^* \\\\
&=& \dst \mathrm{str}\!-\!\!\!\lim_{n\to\infty} x_n(\beta_s\circ\beta_t)(x_n^*) \underbrace{w(s,t)\beta_{st}(x_n)}_{=(\beta_s\circ\beta_t)(x_n)w(s,t)} x_n^* \\\\
&=& \dst \mathrm{str}\!-\!\!\!\lim_{n\to\infty} x_n w(s,t) x_n^* ~=~ \psi(u(s,t)).
\end{array}
\]
This finishes the proof of the first statement. However, reviewing the proof, it is clear that if $z_n$ may always be chosen to be in $\tilde{B}$, then $\phi$ is approximately unitarily equivalent to $\psi$. So the second statement follows as well.
\end{proof}

For the next few results, recall the conventions introduced in Remark \ref{induced-maps-and-unitaries}:

\begin{lemma}
\label{sequence-lift}
Let $G$ be a second-countable, locally compact group. Let $(\alpha,u): G\curvearrowright A$ and $(\beta,w): G\curvearrowright B$ be two cocycle actions. Let $\phi: (A,\alpha,u)\to (B,\beta,w)$ be a non-degenerate and equivariant $*$-homomorphism.

Let $x\in\CM(B)_{\infty}\cap\phi(A)'$ with the property that the map $[g\mapsto \phi(a)\cdot\beta_{\infty,g}(x)]$ is continuous for all $a\in A$. 
Let $(x_n)_n\in\ell^\infty(\IN,\CM(B))$ be a bounded sequence representing $x$.
Then for every $g_0\in G$, $b\in B$ and $\delta>0$, there exists $n_0\in\IN$ and an open neighbourhood $U$ of $g_0$ such that
\[
\sup_{k\geq n_0}~ \sup_{g\in U} \|b(\beta_g(x_k)-\beta_{g_0}(x_k)) \| \leq \delta.
\]
\end{lemma}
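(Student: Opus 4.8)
The plan is to strip the left factor $b$ from the multiplier $x_k$, reformulate the desired estimate as a local equicontinuity statement for the continuous functions $g\mapsto\phi(a)\beta_g(x_k)$, and then extract equicontinuity at $g_0$ by combining the equivariance of $\phi$ with a Baire category argument.

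First I would reduce to a test element coming from $A$. Since $\phi$ is non-degenerate, $B=\overline{B\phi(A)}$, so by the Cohen factorization theorem I may write $b=b'\phi(a)$ with $b'\in B$ and $a\in A$. Then $\|b(\beta_g(x_k)-\beta_{g_0}(x_k))\|\le\|b'\|\,\|\phi(a)(\beta_g(x_k)-\beta_{g_0}(x_k))\|$, so it suffices to control $f_k(g):=\|\phi(a)(\beta_g(x_k)-\beta_{g_0}(x_k))\|$ for a single $a\in A$ and $\delta':=\delta/(1+\|b'\|)$. Each $f_k$ is norm-continuous, since $g\mapsto\beta_g(x_k)$ is strictly continuous on $\CM(B)$ and $\phi(a)\in B$; moreover $f_k(g_0)=0$. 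The hypothesis says exactly that $g\mapsto\phi(a)\beta_{\omega,g}(x)=[(\phi(a)\beta_g(x_k))_k]\in B_\omega$ is continuous, i.e. $g\mapsto\lim_{k\to\omega}f_k(g)$ is continuous with value $0$ at $g_0$. Passing to a compact metric neighbourhood $V$ of $g_0$ I may assume $\lim_{k\to\omega}f_k(g)\le\delta'/4$ for all $g\in V$, so that for each fixed $g\in V$ the set $\{k:f_k(g)\le\delta'/2\}$ lies in $\omega$.

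The heart of the matter is to convert this pointwise-in-$g$ information (for each $g$ a possibly different $\omega$-large index set) into a \emph{single} $J\in\omega$ together with a neighbourhood $U$ of $g_0$ on which $f_k\le\delta'$ for all $k\in J$ at once; writing $H_k(g)=\phi(a)\beta_g(x_k)$, this is equicontinuity of $\{H_k\}_{k\in J}$ at $g_0$. Here the group structure enters decisively. Using equivariance $\beta_{g_1^{-1}}\circ\phi=\phi\circ\alpha_{g_1^{-1}}$, for a genuine action one has $\phi(a)(\beta_g(x_k)-\beta_{g_1}(x_k))=\beta_{g_1}\big(\phi(\alpha_{g_1^{-1}}(a))(\beta_{g_1^{-1}g}(x_k)-x_k)\big)$, whence $\|\phi(a)(\beta_g(x_k)-\beta_{g_1}(x_k))\|=\|\phi(\alpha_{g_1^{-1}}(a))(\beta_{g_1^{-1}g}(x_k)-x_k)\|$; for the cocycle action the same holds up to conjugation by $w(g_1,g_1^{-1}g)$, which tends strictly to $\eins$ as $g\to g_1$ and contributes negligibly and uniformly in $k$, being multiplied by the fixed element $\phi(a)\in B$. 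Consequently the property ``$\{H_k\}_k$ is equicontinuous at $g_1$ for every test element $a\in A$'' is transported by $\alpha$ from one base point to another, so it is \emph{independent of the base point}: it holds either everywhere or nowhere.

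Thus it remains to produce a single point at which $\{g\mapsto\phi(a)\beta_g(x_k)\}_k$ is equicontinuous for all $a$ in a fixed countable dense subset of $A$; homogeneity then moves this to $g_0$, and a triangle-inequality estimate against one fixed index $k_0$ yields the neighbourhood $U$ (one may then even take $J=\IN$). To find such a point I would run a Baire category argument in the spirit of Guentner--Higson--Trout on the compact space $V$: the all-$a$ equicontinuity locus is point-independent (hence empty or full), and I would exclude the empty case by writing its complement as a countable union of closed ($F_\sigma$) sets and deriving a contradiction with the continuity of $g\mapsto\phi(a)\beta_{\omega,g}(x)$. \textbf{This Baire step is the main obstacle.} The genuine subtlety is that $\omega$ is only a filter: a classical Osgood/Baire argument controls cofinite tails $\{k\ge N\}$, whereas here one can only control $\omega$-large index sets, whose uncountable totality obstructs a naive application of the Baire category theorem. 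The resolution is to use the continuity of the $B_\omega$-valued limit itself — not merely pointwise $\omega$-convergence — to cut the relevant conditions down to countably many closed sets, and to feed in the equivariance-induced homogeneity, so that equicontinuity at a single generic point suffices.
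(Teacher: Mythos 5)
Your overall strategy --- reduce to $b\in\phi(A)$, exploit the homogeneity coming from equivariance of $\phi$, and run a Baire category argument in the spirit of Guentner--Higson--Trout --- is the same strategy as the paper's proof, so the plan is sound in outline. But there is a genuine gap in each of the two main steps. First, the Baire step, which you yourself label ``the main obstacle'', is never actually carried out: you correctly observe that the conditions indexed by $J\in\omega$ form an uncountable family and that one must ``cut the relevant conditions down to countably many closed sets'' using the continuity of $g\mapsto\phi(a)\beta_{\omega,g}(x)$, but you do not say what those closed sets are, why they are closed, or why their union is non-meager. This is the technical heart of the lemma; the paper does it by fixing a decreasing chain $J_0\supset J_1\supset\cdots$ in $\omega$ adapted to a symmetric neighbourhood $V$ of $1_G$ on which $\|\phi(\alpha_{g_0}^{-1}(a))(\beta_{\omega,g}(x)-x)\|\le\delta$, forming the closed sets $W_l=\set{g\in G : \|\phi(\alpha_{g_0}^{-1}(a))(\beta_g(x_k)-x_k)\|\le\delta~\text{for all}~k\in J_l}$, extracting by Baire one $W_l\cap W_l^{-1}$ with nonempty interior, and then using the Steinhaus-type fact that $(V_1\cap W_l\cap W_l^{-1})^{-1}(V_1\cap W_l\cap W_l^{-1})$ is a neighbourhood of the identity. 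Announcing the obstacle and asserting that it can be resolved does not prove the resolution.

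Second, your homogeneity/transport step is only valid for genuine actions, and the proposal never uses the hypothesis $x\in\CM(B)_\omega\cap\phi(A)'$, which is essential. For a cocycle action one has $\beta_g=\beta_{g_1}\circ\ad(v)\circ\beta_{g_1^{-1}g}$ with $v=w(g_1^{-1},g_1)^*w(g_1^{-1},g)$, hence $\phi(a)\beta_g(x_k)=\beta_{g_1}\big(\phi(\alpha_{g_1}^{-1}(a))\, v\,\beta_{g_1^{-1}g}(x_k)\, v^*\big)$. The left factor $v$ is adjacent to $\phi(\alpha_{g_1}^{-1}(a))\in B$ and is indeed controlled by strict continuity, uniformly in $k$; but the right factor $v^*$ sits on the far side of $\beta_{g_1^{-1}g}(x_k)$, adjacent to no fixed element of $B$, and strict convergence $v\strict\eins$ gives no bound on $\big(\phi(\alpha_{g_1}^{-1}(a))v\,\beta_{g_1^{-1}g}(x_k)\big)(v^*-\eins)$ that is uniform in $k$. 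The paper absorbs both cocycle unitaries precisely by invoking $\|[x_k,d]\|\le\delta$ for $d$ in a compact subset of $\phi(A)$ and $k$ in a set belonging to $\omega$ (estimate \eqref{eq1:small-commutator}); this also means the transported estimate is only available for $k$ in some $J\in\omega$. For the same reason your parenthetical claim that ``one may then even take $J=\IN$'' is false for a general free filter: a bounded representing sequence of $x$ is completely unconstrained on any subset of $\IN$ whose complement lies in $\omega$, so no neighbourhood $U$ can work uniformly over all $k\in\IN$; the picture of finitely many exceptional indices being absorbed by the individual continuity of $g\mapsto b\beta_g(x_k)$ is valid only for the Fr\'echet filter.
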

\begin{proof}
Since $\phi(A)\subset B$ is assumed to be a non-degenerate subalgebra, it suffices to show the assertion for all $b\in\phi(A)$.

We are going to apply a Baire category argument in the spirit of \cite[1.8]{GuentnerHigsonTrout}. However, we first make a few basic observations concerning cocycle action formulas in general. For $g\in G$, the cocycle identity of $u$ implies
\[
\ad(w(g^{-1},g)) = \ad(w(g^{-1},g))\circ\beta_1 = \beta_{g^{-1}}\circ\beta_{g}
\]
In particular, we have $\beta_{g}^{-1} = \ad(w(g^{-1},g)^*)\circ\beta_{g^{-1}}$ for every $g\in G$. If now $g_0\in G$ is a possibly different group element, then this implies
\[
\beta_g^{-1}\circ\beta_{g_0} = \ad(w(g^{-1},g)^*)\circ\beta_{g^{-1}}\circ\beta_{g_0} = \ad(w(g^{-1},g)^*w(g^{-1},g_0))\circ\beta_{g^{-1}g_0}.
\]

Now let us prove the assertion. Let $g_0\in G, a\in A$ and $\delta>0$ be given. Without loss of generality, assume that all of the elements $x$, $x_k$ and $a$ are contractions. Recall that $\beta: G\to\Aut(B)$ is point-norm continuous. Hence let us find a compact neighbourhood $V_1=V_1^{-1}\ni 1_G$ of the unit such that
\begin{equation} \label{eq1:small-nh-1}
\beta_{g}(\phi(\alpha_{g_0}^{-1}(a)))=_\delta \phi(\alpha_{g_0}^{-1}(a))\end{equation} 
for all $g\in V_1$.

Recall that $u: G\times G\to\CM(A)$ is strictly continuous. So as $V_1$ is compact, the set
\[
K_A= \set{ (\alpha_{g_0}\circ\alpha_{g_1}\circ\alpha_{g_2})^{-1} \Big( a\cdot u(g_0,g_1g_2)^* \alpha_{g_0}\big(u(g_1,g_2)^*\big) \Big) ~|~ g_1,g_2\in V_1  }
\]
is compact in $A$. By equivariance of $\phi$, this means that the set
\[
K_B = \set{ (\beta_{g_0}\circ\beta_{g_1}\circ\beta_{g_2})^{-1} \Big( \phi(a)\cdot w(g_0,g_1g_2)^* \beta_{g_0}\big( w(g_1,g_2)^* \big) \Big) ~|~ g_1, g_2\in V_1  }
\]
is equal to $\phi(K_A)$ and thus compact in $\phi(A)$. Since $x\in\CM(B)_\infty\cap\phi(A)'$, we can find some $n_1\in\IN$ such that
\[
\|[x_k,d]\|\leq\delta\quad\text{for all}~k\geq n_1~\text{and}~d\in K_B.
\]
So if $d\in K_B$ is an element of the above form, one may apply $(\beta_{g_0}\circ\beta_{g_1}\circ\beta_{g_2})$ to this commutator and obtain that
\begin{equation} \label{eq1:small-commutator}
\Big\| \Big[ \phi(a)\cdot w(g_0,g_1g_2)^* \beta_{g_0}\big( w(g_1,g_2)^* \big) , (\beta_{g_0}\circ\beta_{g_1}\circ\beta_{g_2})(x_k) \Big] \Big\| \leq\delta
\end{equation}
for all $g_1,g_2\in V_1$ and $k\geq n_1$.

By the continuity assumption on $x$, we can find a neighbourhood $V=V^{-1}$ of $1_G\in G$ such that
\begin{equation} \label{eq1:cont-assumption}
\|\phi(\alpha_{g_0}^{-1}(a))(\beta_{\infty,g}(x)-x) \| = \limsup_{n\to\infty}~\|\phi(\alpha_{g_0}^{-1}(a))(\beta_g(x_n)-x_n)\|\leq\delta
\end{equation}
for all $g\in V$. In other words,
\[
\inf_{\ell\in\IN}~\sup_{n\geq \ell}~\|\phi(\alpha_{g_0}^{-1}(a))(\beta_g(x_n)-x_n)\|\leq\delta
\]
for all $g\in V$.
For every $\ell\in\IN$, we define
\begin{equation} \label{eq1:W-sets}
W_\ell = \set{ g\in G ~\big|~ \| \phi(\alpha_{g_0}^{-1}(a))(\beta_g(x_k)-x_k)\|\leq 2\delta ~\text{for all}~k\geq \ell}.
\end{equation}
As the extension of $\beta$ to a map from $G$ to $\Aut(\CM(B))$ is strictly continuous, these sets are closed subsets of $G$. By \eqref{eq1:cont-assumption}, it follows that the countable union $\bigcup_{\ell\in\IN} W_\ell\cap W_\ell^{-1}$ contains the neighbourhood $V\cap V_1$ of the unit. 
By local compactness of $G$, it follows from the Baire category theorem that there exists $\ell\in\IN$ such that $W_\ell\cap W_\ell^{-1}$ contains an open subset of $V_1$. Then $U_0=(V_1\cap W_\ell\cap W_\ell^{-1})^{-1}\cdot(V_1\cap W_\ell\cap W_\ell^{-1})$ is a neighbourhood of the unit.  We then have for every $g_1,g_2\in V_1\cap W_\ell\cap W_\ell^{-1}$ and all $k\geq \ell$
\[
\begin{array}{ccl}
\multicolumn{3}{l}{ \phi(a) \cdot \beta_{g_0g_1^{-1}g_2}(x_k) } \\\\
&=& \phi(a) \left( \ad(w(g_0,g_1^{-1}g_2)^*)\circ\beta_{g_0}\circ\beta_{g_1^{-1}g_2}(x_k) \right)  \\\\
&=& \phi(a) \left( \ad\big( w(g_0,g_1^{-1}g_2)^*\cdot\beta_{g_0}(w(g_1^{-1},g_2)^*) \big)\circ\beta_{g_0}\circ\beta_{g_1^{-1}}\circ\beta_{g_2}(x_k) \right)  \\\\
&\stackrel{\eqref{eq1:small-commutator}}{=_\delta}& (\beta_{g_0}\circ\beta_{g_1^{-1}}\circ\beta_{g_2})(x_k)\cdot\phi(a)   \\\\
&\stackrel{\eqref{eq1:small-commutator}}{=_\delta}& \phi(a)\cdot (\beta_{g_0}\circ\beta_{g_1^{-1}}\circ\beta_{g_2})(x_k)   \\\\
&=& \beta_{g_0}\left( \phi(\alpha_{g_0}^{-1}(a))\cdot(\beta_{g_1^{-1}}\circ\beta_{g_2})(x_k) \right) \\\\
&\stackrel{\eqref{eq1:small-nh-1}}{=_\delta}& (\beta_{g_0}\circ\beta_{g_1^{-1}}) \left( \phi(\alpha_{g_0}^{-1}(a)) \cdot \beta_{g_2}(x_k) \right) \\\\
&\stackrel{\eqref{eq1:W-sets}}{=_{2\delta}}& (\beta_{g_0}\circ\beta_{g_1^{-1}}) \left( \phi(\alpha_{g_0}^{-1}(a)) \cdot x_k \right) \\\\
&\stackrel{\eqref{eq1:small-nh-1}}{=_\delta}& \beta_{g_0} \left( \phi(\alpha_{g_0}^{-1}(a)) \cdot \beta_{g_1^{-1}}(x_k) \right) \\\\
&\stackrel{\eqref{eq1:W-sets}}{=_{2\delta}}& \beta_{g_0} \left( \phi(\alpha_{g_0}^{-1}(a)) \cdot x_k \right) \\\\
&=& \phi(a)\cdot\beta_{g_0}(x_k).
\end{array}
\]
In particular, if we set $U=g_0\cdot U_0$, then this shows
\[
 \sup_{k\geq \ell}~ \sup_{g\in U}~ \|\phi(a)\cdot\big( \beta_{g_0}(x_k)-\beta_g(x_k) \big)\| \leq 8\delta.
\]
As $g_0\in G$ and $\delta>0$ were arbitrary, this finishes the proof.
\end{proof}

\begin{lemma}
\label{approx-fixed}
Let $G$ be a second-countable, locally compact group. Let $(\alpha,u): G\curvearrowright A$ and $(\beta,w): G\curvearrowright B$ be two cocycle actions. Let $\phi: (A,\alpha,u)\to (B,\beta,w)$ be a non-degenerate and equivariant $*$-homomorphism. 
Let $z\in\CM(B)_\infty\cap\phi(A)'$ be an element such that $[g\mapsto \phi(a)\beta_{\infty,g}(z)]$ is continuous for every $a\in A$.
Let $\eps>0$, $K\subset G$ a compact set and assume that
$\|\phi(a)(z-\beta_{\infty,g}(z)) \|\leq\eps$ for all $g\in K$ and $a\in A$ with $\|a\|\leq 1$. If $(z_n)_n$ is a bounded sequence representing $z$, then
\[
\limsup_{n\to\infty} ~\max_{g\in K}~ \| b (z_n-\beta_g(z_n))\|\leq\eps
\]
for all $b\in B$ with $\|b\|\leq 1$.
\end{lemma}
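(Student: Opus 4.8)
The plan is to fix a contraction $b\in B$ and an error parameter $\delta>0$, and to exhibit a single set $J'\in\omega$ on which $\max_{g\in K}\|b(z_n-\beta_g(z_n))\|\le\eps+2\delta$; letting $\delta\to 0$ then gives the claim. Two ingredients feed into this. First, Lemma \ref{sequence-lift} applies verbatim to $z$ and its representing sequence $(z_n)_n$ (its hypotheses are exactly $z\in\CM(B)_\omega\cap\phi(A)'$ together with continuity of $[g\mapsto\phi(a)\beta_{\omega,g}(z)]$), and provides, for each anchor point $g_0\in G$, a set $J_{g_0}\in\omega$ and an open neighbourhood $U_{g_0}\ni g_0$ with $\sup_{k\in J_{g_0}}\sup_{g\in U_{g_0}}\|b(\beta_g(z_k)-\beta_{g_0}(z_k))\|\le\delta$. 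Second, I need an \emph{anchoring} estimate $\|b(z-\beta_{\omega,g}(z))\|\le\eps$ in $B_\omega$ at each individual $g\in K$, which I obtain by upgrading the hypothesis from left multiplication by $\phi(A)$ to left multiplication by arbitrary $b\in B$.

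The anchoring estimate is the main point. Writing $X_g=z-\beta_{\omega,g}(z)\in\CM(B)_\omega$, the hypothesis reads $\|\phi(a)X_g\|\le\eps$ for all contractions $a\in A$ and all $g\in K$. Taking adjoints and using $\phi(a)^*=\phi(a^*)$ turns this into $\|X_g^*\phi(a)\|\le\eps$ for all contractions $a$. Now fix an approximate unit $(e_\lambda)_\lambda$ of $A$; non-degeneracy of $\phi$ gives $\phi(e_\lambda)b\to b$ in norm, whence $X_g^*\phi(e_\lambda)b\to X_g^*b$ while each $\|X_g^*\phi(e_\lambda)b\|\le\|X_g^*\phi(e_\lambda)\|\le\eps$. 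Thus $\|X_g^*b\|\le\eps$, and a final adjoint yields $\|bX_g\|=\|X_g^*b^*\|\le\eps$. Translating back, this means $\lim_{n\to\omega}\|b(z_n-\beta_g(z_n))\|\le\eps$ for every $g\in K$.

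It remains to combine the two ingredients, and here lies the genuine difficulty: the pointwise bounds from the anchoring step live on different sets $J_g\in\omega$, and one cannot intersect uncountably many of them over the continuum $K$. This is exactly what Lemma \ref{sequence-lift} together with compactness of $K$ circumvents. Concretely, cover $K\subset\bigcup_{j=1}^N U_{g_j}$ by finitely many of the neighbourhoods above, and let $J'\in\omega$ be the intersection of the corresponding finitely many sets $J_{g_j}$ with the (also finitely many) sets witnessing $\|b(z_k-\beta_{g_j}(z_k))\|\le\eps+\delta$. For $k\in J'$ and arbitrary $g\in K$, choosing $j$ with $g\in U_{g_j}$ and applying the triangle inequality gives $\|b(z_k-\beta_g(z_k))\|\le\|b(z_k-\beta_{g_j}(z_k))\|+\|b(\beta_{g_j}(z_k)-\beta_g(z_k))\|\le(\eps+\delta)+\delta$. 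Hence $\max_{g\in K}\|b(z_k-\beta_g(z_k))\|\le\eps+2\delta$ for all $k\in J'$, so that $\limsup_{n\to\omega}\max_{g\in K}\|b(z_n-\beta_g(z_n))\|\le\eps+2\delta$; letting $\delta\to 0$ finishes the proof. The only nontrivial steps are the adjoint/approximate-unit transfer from $\phi(A)$ to $B$ and the passage from a continuum of anchor points to finitely many inside a single element of $\omega$; everything else is bookkeeping.
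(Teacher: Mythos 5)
Your proof is correct and follows essentially the same route as the paper's: Lemma \ref{sequence-lift} at each anchor point, a finite subcover of $K$, and the triangle inequality on a common element of $\omega$. The only difference is that you spell out the upgrade from $\|\phi(a)(z-\beta_{\omega,g}(z))\|\leq\eps$ to $\|b(z-\beta_{\omega,g}(z))\|\leq\eps$ via adjoints and an approximate unit, whereas the paper simply cites non-degeneracy of $\phi(A)$ in $B$; your version of that step is correct and slightly more explicit.
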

\begin{proof}
Given any $b\in B, \|b\|\leq 1$, $\delta>0$ and $g_0\in K$, we can apply \ref{sequence-lift} and find $n_0\in\IN$ and a neighbourhood $U\ni g_0$ such that
\[
\sup_{k\geq n_0}~\sup_{g\in U}~ \| b(\beta_{g_0}(z_k)-\beta_g(z_k) \| \leq\delta.
\]
By compactness of $K$, we can find $N\in\IN$, elements $\set{g_j}_{j=1}^N\subset K$ and a covering $K\subset\bigcup_{j=1}^N U_j$ with $g_j\in U_j$ and some $n\in\IN$ such that
\[
\sup_{k\geq n}~\sup_{g\in U_j}~ \| b(\beta_{g_j}(z_k)-\beta_g(z_k) \| \leq\delta
\]
for all $j=1,\dots,N$. Since $\phi(A)$ is non-degenerate in $B$, it follows from our assumptions on $z$ that $\|b(\beta_{\infty,g_j}(z)-z)\|\leq\eps$ for all $j=1,\dots,N$. 
By making $n$ larger, if necessary, we can thus assume that
\[
\sup_{k\geq n}~\|b (z_k-\beta_{g_j}(z_k)) \| \leq\eps
\]
for all $j=1,\dots,N$. So if $g\in K$ is arbitrary, then there is some $j$ with $g\in U_j$, and so
\[
\|b(\beta_g(z_k)-z_k)\| \leq  \| b(\beta_{g_j}(z_k)-\beta_g(z_k) \| + \|b (z_k-\beta_{g_j}(z_k)) \| \leq \delta+\eps
\]
for all $k\geq n$. Since $\delta$ and $b$ were chosen arbitrary, this finishes the proof.
\end{proof}

\begin{rem}
\label{approx-fixed-b}
By inserting $b=\eins_B$ in \ref{approx-fixed}, we obtain the following unital variant:

Let $B$ be unital and $z\in B_\infty\cap A'$ an element such that $[g\mapsto \beta_{\infty,g}(z)]$ is continuous.
Let $\eps>0$, $K\subset G$ a compact set and assume that
$\|z-\beta_{\infty,g}(z) \|\leq\eps$ for all $g\in K$. If $(z_n)_n$ is a bounded sequence representing $z$, then
\[
\limsup_{n\to\infty} ~\max_{g\in K}~ \|z_n-\beta_g(z_n)\|\leq\eps.
\]
\end{rem}

\begin{defi}[cf.~{\cite[Section 6]{IzumiMatui10} and \cite[2.3]{GoldsteinIzumi11}}] 
Let $\gamma: G\curvearrowright D$ be an action of a second-countable, locally compact group on a separable, unital \cstar-algebra. We say that the action $\gamma$ (or the system $(D,\gamma)$) has approximately $G$-inner half-flip, if the two equivariant $*$-homomorphisms $\id_D\otimes\eins, \eins\otimes\id_D: (D,\gamma)\to (D\otimes D,\gamma\otimes\gamma)$ are approximately $G$-unitarily equivalent.
\end{defi}

We are now going to prove an equivariant McDuff-type absorption result in the unital case, modelling the well-known McDuff-type results \cite[7.2.2]{Rordam}, \cite[2.3]{TomsWinter07} or \cite[4.11]{Kirchberg04} from the non-equivariant setting. We note that the result given below is folklore for discrete acting groups, and was already used by Izumi-Matui in \cite[6.3]{IzumiMatui10}, Goldstein-Izumi in \cite[2.3]{GoldsteinIzumi11} and Matui-Sato in \cite[4.8]{MatuiSato12_2} and \cite[proof of 4.9]{MatuiSato14}.

\begin{theorem} 
\label{equi mcduff} 
Let $A$ and $D$ be separable, unital \cstar-algebras and $G$ a second-countable, locally compact group. Assume that $(\alpha,u): G\curvearrowright A$ is a cocycle action. Let $\gamma: G\curvearrowright D$ be an action with approximately $G$-inner half-flip. If there exists an equivariant and unital $*$-homomorphism
\[
(D,\gamma) \to (A_{\infty,\alpha}\cap A', \alpha_\infty),
\]
then $(A,\alpha,u)$ is strongly cocycle conjugate to $(D\otimes A, \gamma\otimes\alpha,\eins_D\otimes u)$.
\end{theorem}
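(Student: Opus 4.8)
The plan is to apply the one-sided intertwining Lemma~\ref{one side} to the first-factor embedding
\[
\phi: (A,\alpha,u)\to (D\otimes A,\gamma\otimes\alpha,\eins_D\otimes u),\qquad \phi(a)=\eins_D\otimes a,
\]
which is injective, non-degenerate (as $D$ is unital) and equivariant, with $\phi(u(s,t))=\eins_D\otimes u(s,t)$. Writing $B=D\otimes A$ and $\beta=\gamma\otimes\alpha$ (both unital, so $\CM(B)=B$), it then suffices to produce, for every $\eps>0$, every compact $K\subset G$ and all finite sets $F_A\fin A$, $F_B\fin B$, a unitary $z\in\CU(\CM(B))=\CU(B)$ satisfying conditions \ref{eq:oneside-a}--\ref{eq:oneside-c}; the first conclusion of Lemma~\ref{one side} then yields exactly the asserted strong cocycle conjugacy between $(A,\alpha,u)$ and $(D\otimes A,\gamma\otimes\alpha,\eins_D\otimes u)$.

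To manufacture these unitaries I would combine the two hypotheses into one equivariant map on the sequence algebra. Let $\Psi:(D,\gamma)\to (A_{\omega,\alpha}\cap A',\alpha_\omega)$ be the given unital equivariant $*$-homomorphism. The legs $d\mapsto d\otimes\eins_A$ and $d'\mapsto\eins_D\otimes\Psi(d')$ have commuting ranges, so (using that $D$ is nuclear, a standard consequence of having approximately $G$-inner half-flip) they combine to a unital $*$-homomorphism
\[
\Lambda: D\otimes D\to B_\omega,\qquad \Lambda(d\otimes d')=d\otimes\Psi(d').
\]
Its image lies in $B_\omega\cap\phi(A)'$, since $\Psi(D)\subset A_\omega\cap A'$ and the first leg lands in $D\otimes\eins_A$; it lies in the continuous part $B_{\omega,\beta}$ precisely because $\Psi$ takes values in $A_{\omega,\alpha}$ (the first leg has continuous orbit as $\gamma$ is a genuine action); and a direct check from the equivariance of $\Psi$ shows $\Lambda$ is equivariant from $(D\otimes D,\gamma\otimes\gamma)$ to $(B_\omega,\beta_\omega)$. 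Now invoke the half-flip: choosing implementing unitaries $v_n\in\CU(D\otimes D)$ for $\eins\otimes\id_D\ue{G}\id_D\otimes\eins$ and setting $Z_n:=\Lambda(v_n)\in\CU(B_\omega)$, one obtains $Z_n^*(d\otimes\eins_A)Z_n\to\eins_D\otimes\Psi(d)$ in norm for each $d\in D$, while by equivariance $\|\beta_{\omega,g}(Z_n)-Z_n\|=\|\Lambda\big((\gamma\otimes\gamma)_g(v_n)-v_n\big)\|\to 0$ uniformly on compact subsets of $G$.

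Fixing $\eps,K,F_A,F_B$ (and reducing $F_B$ to elementary tensors $d\otimes a$ by linearity and density), I would pick a single large $n$ and set $z:=Z_n\in\CU(B_\omega)$: then $z$ commutes exactly with $\phi(A)$, one has $z^*(d\otimes a)z=\big(z^*(d\otimes\eins_A)z\big)(\eins_D\otimes a)\approx_\eps(\eins_D\otimes\Psi(d))(\eins_D\otimes a)$ on the relevant tensors, and $\|z-\beta_{\omega,g}(z)\|\le\eps$ for $g\in K$. It remains to pass to an honest unitary in $B$: lift $z$ to a sequence of unitaries $(z_m)_m$ in $B$ (possible as $B$ is unital). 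Conditions \ref{eq:oneside-a} and \ref{eq:oneside-b} then hold for $m$ along $\omega$ by reading the above norm estimates componentwise; writing $\Psi(d)=[(a^{(d)}_m)_m]$ one gets $z_m^*(d\otimes a)z_m\approx\eins_D\otimes(a^{(d)}_m a)=\phi(a^{(d)}_m a)\in\phi(A)$, which controls the distance in \ref{eq:oneside-b}. The genuinely delicate point is \ref{eq:oneside-c}: for non-discrete $G$ the approximate $\beta_\omega$-invariance of $z$ does not obviously descend to uniform-on-$K$ invariance of the representing sequence, since naively lifting only controls finitely many group elements at a time. This is exactly the obstacle handled by the Baire-category machinery of Lemma~\ref{sequence-lift}; applying Lemma~\ref{approx-fixed} in the unital form of Remark~\ref{approx-fixed-b} to $z$ gives $\limsup_{m\to\omega}\max_{g\in K}\|z_m-\beta_g(z_m)\|\le\eps$, whence $\|b(z_m-\beta_g(z_m))\|\le\eps\|b\|$ for all $b\in F_B$ and \ref{eq:oneside-c} holds for a suitable $m$. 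Choosing such an $m$ produces the required unitary $z_m\in\CU(B)$, and feeding this family of unitaries into Lemma~\ref{one side} completes the proof.
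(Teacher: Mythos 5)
Your proposal is correct and follows essentially the same route as the paper: apply Lemma~\ref{one side} to the first-factor embedding $\eins_D\otimes\id_A$, build the required unitaries by pushing the half-flip unitaries of $(D,\gamma)$ through a map $D\otimes D\to (D\otimes A)_\omega$ that combines $d\mapsto d\otimes\eins_A$ with the given central-sequence embedding, and use Remark~\ref{approx-fixed-b} to descend the approximate $\beta_\omega$-invariance to a representing sequence uniformly on compact sets. The only cosmetic difference is that the paper packages the combined map as $\Phi$ on $D\otimes D\otimes A$ (via a maximal tensor product over $A$) rather than invoking nuclearity of $D$ directly, and extracts the final unitaries by a diagonal argument over $n$ rather than fixing one $n$ per tolerance; both variants deliver the same hypothesis of Lemma~\ref{one side}.
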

\begin{proof}
Consider the canonical unital $*$-homomorphisms
\[
(A_\infty\cap A')\otimes_{\max} A \to A_\infty,\quad x\otimes a\mapsto x\cdot a.
\]
Note that this is $(\alpha_\infty\otimes\alpha)$-to-$\alpha_\infty$ equivariant. This induces another unital and equivariant $*$-homomorphism (disregarding the cocycles for now)
\[
\big( D\otimes (A_{\infty,\alpha}\cap A')\otimes_{\max} A, \gamma\otimes\alpha_\infty\otimes\alpha \big) \to \big( (D\otimes A)_{\infty,\gamma\otimes\alpha}, (\gamma\otimes\alpha)_\infty \big).
\]
via $d\otimes x\otimes a\mapsto d\otimes (x\cdot a)\in D\otimes (A_{\infty,\alpha}) \subset (D\otimes A)_{\infty,\gamma\otimes\alpha}$.
By our assumption, it follows that there is a unital and equivariant $*$-homomorphism
\[
\Phi: \big( D\otimes D\otimes A, \gamma\otimes\gamma\otimes\alpha) \to \big( (D\otimes A)_{\infty,\gamma\otimes\alpha}, (\gamma\otimes\alpha)_\infty \big)
\]
with $\Phi(d\otimes\eins_D\otimes a)=d\otimes a$ for all $a\in A$ and $d\in D$, and with $\Phi(\eins_D\otimes D\otimes A)\subset\eins_{D}\otimes A_\infty$. Since $(D,\gamma)$ has approximately $G$-inner half-flip, choose a sequence $v_n\in D\otimes D$ with $\dst\max_{g\in K}\|v_n-(\gamma\otimes\gamma)_g(v_n)\|\to 0$ for every compact $K\subset G$, and $v_n^*(d\otimes\eins_D)v_n \to \eins_D\otimes d$ for all $d\in D$.
The sequence of unitaries $u_n=\Phi(v_n\otimes\eins_A)\in (D\otimes A)_{\infty,\gamma\otimes\alpha}$ then satisfies
\begin{itemize}
\item $[u_n, \eins_D\otimes a] = \Phi([v_n\otimes\eins_A, \eins_{D\otimes D}\otimes a])=0$ for all $a\in A$;
\item $u_n^*(d\otimes a)u_n = \Phi\big( (v_n\otimes\eins_A)(d\otimes\eins_D\otimes a)(v_n^*\otimes\eins_A) \big)\to \Phi(\eins\otimes d\otimes a) \in \eins_D\otimes A_\infty$ for all $a\in A$ and $d\in D$;
\item $\|u_n-(\gamma\otimes\alpha)_{\infty,g}(u_n)\| = \|\Phi\big( (v_n-(\gamma\otimes\gamma)_g(v_n))\otimes\eins_A \big)\| \to 0$ for all $g\in G$, and uniformly on compact sets.
\end{itemize}
Lifting each $u_n$ to a sequence of unitaries $(u_k^{(n)})_k\in\ell^\infty(\IN,D\otimes A)$ and applying a diagonal sequence argument using \ref{approx-fixed-b}, we can find a sequence of unitaries $z_n=u_{k_n}^{(n)}\in D\otimes A$ satisfying
\begin{itemize}
\item $[z_n, \eins_D\otimes a]\to 0$ for all $a\in A$;
\item $\dist( z_n^*(d\otimes a)z_n, \eins\otimes A) \to 0$ for all $a\in A$ and $d\in D$;
\item $\dst\max_{g\in K}\|z_n-(\gamma\otimes\alpha)_g(z_n)\|\to 0$ for all compact $K\subset G$.
\end{itemize}
In particular, we have met the conditions of \ref{eq:oneside-a}, \ref{eq:oneside-b} and \ref{eq:oneside-c} for the equivariant embedding $\eins_D\otimes\id_A: (A,\alpha,u)\to (D\otimes A, \gamma\otimes\alpha, \eins_D\otimes u)$. This finishes the proof.
\end{proof}

\begin{reme}
In the non-equivariant situation, a suitable replacement for $A_\infty\cap A'$ in \ref{equi mcduff} can be $\CM(A)_\infty\cap A'$ to cover the non-unital case, see \cite[7.2.2]{Rordam}. In the equivariant case, the above result \ref{equi mcduff} can be shown to generalize with using this \cstar-algebra, but only if $\alpha$ is assumed to be a genuine action. If $(\alpha,u): G\curvearrowright A$ is a cocycle action with non-trivial cocycles, then $\CM(A)_\infty\cap A'$ might not be the right object to consider, since $\alpha$ does not necessarily induce a genuine action on this \cstar-algebra.
The more current (non-equivariant) McDuff type theorems for strongly self-absorbing \cstar-algebras \cite[2.3]{TomsWinter07} and \cite[4.1]{Kirchberg04} shift away from considering $\CM(A)_\infty\cap A'$ in the non-unital case, in order to instead consider (either implicitely or explicitely) its quotient \cstar-algebra $F_\infty(A)$. As seen in \ref{induced-maps-and-unitaries}, a cocycle action on $A$ gives rise to a genuine action on $F_\infty(A)$ in a natural way, thus indicating that $F_\infty(A)$ is the more natural object to look at.

Both in \cite{TomsWinter07} and \cite{Kirchberg04}, the cost of working with $F_\infty(A)$ in the non-unital case is that one has to use a strengthened version of approximate inner half-flip, which essentially gives rise to all the $K_1$-injectivity assumptions in both sources (however, we note that $K_1$-injectivity later turned out to be redundant by virtue of \cite{Winter11}). In the next section, we are going to generalize this approach to the equivariant situation, and prove an optimal McDuff-type theorem by considering equivariant embeddings into $F_\infty(A)$.
\end{reme}


\section{Strongly self-absorbing \cstar-dynamical systems}
\noindent
In this section, we will introduce strongly self-absorbing actions and generalize some partial results from Toms-Winter's seminal work \cite{TomsWinter07} on strongly self-absorbing \cstar-algebras.

\begin{defi}
\label{ssa system}
Let $\CD$ be a separable, unital \cstar-algebra and $G$ a second-countable, locally compact group. Let $\gamma: G\curvearrowright\CD$ be a point-norm continuous action. We say that $(\CD,\gamma)$ is a strongly self-absorbing \cstar-dynamical system, or that $\gamma$ is a strongly self-absorbing action, if the equivariant first-factor embedding
\[
\id_\CD\otimes\eins_\CD: (\CD,\gamma)\to (\CD\otimes\CD,\gamma\otimes\gamma)
\]
is approximately $G$-unitarily equivalent to an isomorphism.
\end{defi}

\begin{example}
Let $G$ be a second-countable, locally compact group and $\CD$ a strongly self-absorbing \cstar-algebra. Then the trivial $G$-action on $\CD$ is a strongly self-absorbing action.
\end{example}

\begin{reme}
Trivially, any two equivariant $*$-homomorphisms, which are approximately $G$-unitarily equivalent, are also approximately unitarily equivalent as ordinary $*$-homomorphisms. So if $(\CD,\gamma)$ is a strongly self-absorbing \cstar-dynamical system, then $\CD$ must be a strongly self-absorbing \cstar-algebra, see \cite{TomsWinter07}.
\end{reme}

We now make some important observations concerning strongly self-abs\-orbing actions on the one hand, and actions with approximately $G$-inner half-flip on the other hand. This mimics the situation in the non-equivariant case \cite{TomsWinter07}: 

\begin{prop}[cf.~{\cite[1.9]{TomsWinter07}}]
\label{aih-ssa}
Let $D$ be a separable, unital \cstar-algebra. Let $\gamma: G\curvearrowright D$ be an action with approximately $G$-inner half-flip. Then: 
\begin{enumerate}[label=\textup{(\roman*)},leftmargin=*]
\item \label{aih-ssa:1}
Let $\alpha: G\curvearrowright A$ be on action on a unital \cstar-algebra.
Suppose that there exists an equivariant unital $*$-homomorphism from $(D,\gamma)$ to $(A_{\infty,\alpha}\cap A',\alpha_\infty)$.
Then all unital equivariant $*$-homomorphisms from $(D,\gamma)$ to $(A,\alpha)$ are mutually approximately $G$-unitarily equivalent.
\item \label{aih-ssa:2}
The infinite tensor power $(\bigotimes_\IN D, \bigotimes_\IN\gamma)$ is strongly self-absorbing.
\end{enumerate}
\end{prop}
\begin{proof}
\ref{aih-ssa:1}:
Let $\phi_1$ and $\phi_2$ be two such $*$-homomorphisms.
Using the underlying assumption, we may find a unital equivariant $*$-homomorphism $\kappa: (D,\gamma)\to (A_{\infty,\alpha}\cap A',\alpha_\infty)$.
In particular, the range of $\kappa$ pointwise commutes with elements in $\phi_1(D)\cup\phi_2(D)$.
Consider the following diagram of equivariant $*$-homomorphisms, keeping in mind that the upper and lower triangles commute:
\[
\xymatrix{
&& D\otimes D \ar[rrd]^{\phi_1\times\kappa} && \\
D \ar[rru]^{\id\otimes\eins} \ar[rrd]_{\id\otimes\eins} \ar@/^0.70pc/[rrrr]^{\phi_1} \ar@/_0.70pc/[rrrr]_{\phi_2} &&&& A_{\infty,\alpha} \\
&& D\otimes D \ar[rru]_{\phi_2\times\kappa} &&
}
\]
As $(D,\gamma)$ has approximately $G$-inner half-flip, we see that both $\phi_1$ and $\phi_2$ are approximately $G$-unitarily equivalent to $\kappa$ as equivariant $*$-homomorphisms into $A_{\infty,\alpha}$.
By \ref{approx-fixed-b}, this implies that $\phi_1$ and $\phi_2$ are indeed approximately $G$-unitarily equivalent as equivariant $*$-homomorphisms into $A$.

\ref{aih-ssa:2}: Set $(\CD,\delta)=(\bigotimes_\IN D, \bigotimes_\IN\gamma)$.
It is easy to see that $(\CD,\delta)$ also has approximately $G$-inner half-flip.
Clearly it admits an approximately central and equivariant embedding of $(D,\gamma)$, and therefore also of $(\CD,\delta)$ itself via a standard reindexation trick.
Hence it fits the assumptions in \ref{aih-ssa:1} in place of both $(D,\gamma)$ and $(A,\alpha)$.
It is trivial that there exists an equivariant isomorphism $\phi: (\CD,\delta)\to (\CD\otimes\CD,\delta\otimes\delta)$.
If we apply \ref{aih-ssa:1}, it follows that $\phi$ is approximately $G$-unitarily equivalent to the first-factor embedding $\id_\CD\otimes\eins_\CD$, showing the claim.
\end{proof}

\begin{prop}[cf.~{\cite[1.10(ii), 1.5, 1.12]{TomsWinter07}}]
\label{ssa-facts}
Let $(\CD,\gamma)$ be a strongly self-absorbing \cstar-dynamical system. Then:
\begin{enumerate}[label=\textup{(\roman*)},leftmargin=*]
\item There exists a unital and equivariant $*$-homomorphism from $(\CD,\gamma)$ to 
$(\CD_{\infty,\gamma}\cap\CD', \gamma_\infty)$. \label{ssa-facts:1}
\item $(\CD,\gamma)$ has approximately $G$-inner half-flip. \label{ssa-facts:2}
\item Let $(A,\alpha)$ be a \cstar-dynamical system on a unital \cstar-algebra such that $(A,\alpha)$ is cocycle conjugate to $(A\otimes\CD,\alpha\otimes\gamma)$. Then any two equivariant, unital $*$-homomorphisms from $(\CD,\gamma)$ to $(A,\alpha)$ are approximately $G$-unitarily equivalent. \label{ssa-facts:3}
\end{enumerate}
\end{prop}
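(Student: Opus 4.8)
The plan is to transcribe Toms--Winter's non-equivariant arguments \cite{TomsWinter07} into the equivariant intertwining framework built above. Fix an equivariant isomorphism $\mu\colon(\CD,\gamma)\to(\CD\otimes\CD,\gamma\otimes\gamma)$ with $\mu\ue{G}\id_\CD\otimes\eins_\CD$, which exists by Definition \ref{ssa system}; since an equivariant isomorphism preserves approximate $G$-unitary equivalence, composing with $\mu^{-1}$ gives $\mu^{-1}\circ(\id_\CD\otimes\eins_\CD)\ue{G}\id_\CD$. For \ref{ssa-facts:1} I would feed Corollary \ref{equi-Ell2} the constant system $(A_n,\phi_n)=((\CD,\gamma),\id_\CD)$, with limit $(\CD,\gamma)$, and the tensor-power system $(B_n,\psi_n)=((\CD^{\otimes n},\gamma^{\otimes n}),\id_{\CD^{\otimes n}}\otimes\eins_\CD)$, with limit $(\bigotimes_\IN\CD,\bigotimes_\IN\gamma)$. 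As intertwining maps I take the equivariant isomorphisms $\kappa_n\colon\CD^{\otimes n}\to\CD$ defined by $\kappa_1=\id_\CD$ and $\kappa_{n+1}=\kappa_n\circ(\id_{\CD^{\otimes(n-1)}}\otimes\mu^{-1})$, together with $\theta_n=\kappa_{n+1}^{-1}$. Then $\kappa_{n+1}\circ\theta_n=\id_\CD=\phi_n$ holds exactly, and since $\kappa_{n+1}\circ\psi_n=\kappa_n\circ(\id_{\CD^{\otimes(n-1)}}\otimes(\mu^{-1}\circ(\id_\CD\otimes\eins_\CD)))\ue{G}\kappa_n$ by the observation above (tensoring with identities preserving $\ue{G}$), applying $\kappa_{n+1}^{-1}$ gives $\theta_n\circ\kappa_n\ue{G}\psi_n$; Corollary \ref{equi-Ell2} then yields the conjugacy. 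Notably, this step uses only Definition \ref{ssa system}.

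Part \ref{ssa-facts:2} is the real content, and I would prove it directly, following \cite[1.5]{TomsWinter07}. With $\sigma$ the (equivariant) flip on $\CD\otimes\CD$ one has $\eins_\CD\otimes\id_\CD=\sigma\circ(\id_\CD\otimes\eins_\CD)$, so the half-flip $\id_\CD\otimes\eins_\CD\ue{G}\eins_\CD\otimes\id_\CD$ is equivalent, after transporting both sides through $\mu^{-1}$ and using $\mu^{-1}\circ(\id_\CD\otimes\eins_\CD)\ue{G}\id_\CD$, to the assertion that the involutive automorphism $\mu^{-1}\circ\sigma\circ\mu$ of $\CD$ is approximately $G$-inner. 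To get at this I would compare the two equivariant isomorphisms $(\mu\otimes\id_\CD)\circ\mu$ and $(\id_\CD\otimes\mu)\circ\mu$ from $\CD$ to $\CD^{\otimes 3}$: each is approximately $G$-unitarily equivalent to the embedding $d\mapsto d\otimes\eins\otimes\eins$ (because $\mu\ue{G}\id_\CD\otimes\eins_\CD$ and composition or tensoring with fixed equivariant maps preserves $\ue{G}$), hence to one another, so the automorphism $(\id_\CD\otimes\mu)\circ(\mu^{-1}\otimes\id_\CD)$ of $\CD^{\otimes3}$ is approximately $G$-inner. The hard part — which I expect to be the main obstacle — is to squeeze a genuine half-flip inside $\CD\otimes\CD$ out of this triple-tensor statement: one must eliminate the spectator tensor factor while ensuring the implementing unitaries remain asymptotically $\gamma\otimes\gamma$-fixed, so that the outcome is approximate $G$-unitary equivalence rather than mere approximate unitary equivalence.

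For \ref{ssa-facts:3} I would mimic \cite[1.12]{TomsWinter07}. Write $j\colon(A,\alpha)\to(A\otimes\CD,\alpha\otimes\gamma)$ for the first-factor embedding. For a unital equivariant $\rho\colon(\CD,\gamma)\to(A,\alpha)$ the map $\rho\otimes\id_\CD$ is equivariant, so precomposing the half-flip of \ref{ssa-facts:2} with it yields $j\circ\rho=(\rho\otimes\id_\CD)\circ(\id_\CD\otimes\eins_\CD)\ue{G}(\rho\otimes\id_\CD)\circ(\eins_\CD\otimes\id_\CD)=\eins_A\otimes\id_\CD$. As the right-hand side is independent of $\rho$, any two unital equivariant $\rho_0,\rho_1$ satisfy $j\circ\rho_0\ue{G}j\circ\rho_1$. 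To transfer this from $A\otimes\CD$ back to $A$ I would invoke that, under the standing hypothesis $(A,\alpha)\cc(A\otimes\CD,\alpha\otimes\gamma)$, the embedding $j$ is itself approximately $G$-unitarily equivalent to an isomorphism; this equivariant $\CD$-absorption fact can be obtained by checking the hypotheses of Lemma \ref{one side} for $j$, the needed unitaries being manufactured from the half-flip \ref{ssa-facts:2} once the assumed isomorphism $A\cong A\otimes\CD$ supplies a second tensor copy of $\CD$. Composing $j\circ\rho_0\ue{G}j\circ\rho_1$ with the inverse of that isomorphism then gives $\rho_0\ue{G}\rho_1$, as desired. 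Throughout I would use that $\ue{G}$ is unaffected by replacing an action with an exterior equivalent one, which allows free passage between the cocycle conjugate systems.
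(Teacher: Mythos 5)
Your part \ref{ssa-facts:1} is correct and is essentially the paper's argument: the paper compares the two inductive systems $\set{(\CD^{\otimes k},\gamma^{\otimes k}),\iota_k}$ and $\set{(\CD^{\otimes k},\gamma^{\otimes k}),\id\otimes\mu}$, and the verification reduces to exactly the relation $\mu^{-1}\circ(\id_\CD\otimes\eins_\CD)\ue{G}\id_\CD$ that you isolate. The gaps are in \ref{ssa-facts:2} and \ref{ssa-facts:3}. For \ref{ssa-facts:2} your reduction (to approximate $G$-innerness of $\mu^{-1}\circ\sigma\circ\mu$, via the comparison of $(\mu\otimes\id_\CD)\circ\mu$ and $(\id_\CD\otimes\mu)\circ\mu$) is a correct start, but you then explicitly declare the decisive step --- getting from the triple-tensor statement back to a half-flip in $\CD\otimes\CD$ with asymptotically $(\gamma\otimes\gamma)$-invariant unitaries --- an unresolved obstacle. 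That step is the entire content of the assertion, so as written \ref{ssa-facts:2} is not proved. The paper settles it by running Toms--Winter's proof of \cite[1.5]{TomsWinter07} verbatim with approximate unitary equivalence replaced by $\ue{G}$ throughout; the point is that all unitaries arising there are obtained from witnesses of $\mu\ue{G}\id_\CD\otimes\eins_\CD$ by applying equivariant unital maps, so their asymptotic invariance is inherited rather than something to be arranged separately. You need to actually carry this out.

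For \ref{ssa-facts:3} the first half, $j\circ\rho_0\ue{G}\eins_A\otimes\id_\CD\ue{G}j\circ\rho_1$, is fine, but the transfer back to $A$ rests on the claim that $j=\id_A\otimes\eins_\CD$ is approximately \emph{$G$-unitarily} equivalent to an isomorphism. That is strictly more than Lemma \ref{one side} (or Theorem \ref{equi mcduff}) provides: these yield an isomorphism $\Psi=\lim_n\ad(x_n)\circ j$ in which the $x_n$ are \emph{not} asymptotically invariant --- only the coboundaries $x_n(\alpha\otimes\gamma)_g(x_n^*)$ converge, to a possibly nontrivial cocycle $v_g$ --- and $\Psi$ is only a strong cocycle conjugacy, not equivariant. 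Composing $j\circ\rho_0\ue{G}j\circ\rho_1$ with $\Psi^{-1}$ therefore need not preserve asymptotic $\alpha$-invariance of the implementing unitaries: one computes $\alpha_g(\Psi^{-1}(w_n))\approx\Psi^{-1}(v_g w_n v_g^*)$, and there is no reason for $w_n$ to commute with $v_g$. The paper avoids this issue entirely by staying inside the sequence algebra: from the hypothesis and \ref{ssa-facts:1} it produces a unital equivariant $\psi:(\CD,\gamma)\to(A_{\infty,\alpha}\cap A',\alpha_\infty)$, sets $\kappa_j(a\otimes b)=\phi_j(a)\psi(b)$, and uses a half-flip unitary $v$ to form $w=\kappa_2(v^*)\kappa_1(v)\in\CU(A_{\infty,\alpha})$, which is approximately $\alpha_\infty$-fixed and approximately conjugates $\phi_1$ to $\phi_2$; Remark \ref{approx-fixed-b} then descends this to honest unitaries in $A$. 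You should replace your transfer step by an argument of this kind.
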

\begin{proof}
Let us show \ref{ssa-facts:1}.
Let $\phi: (\CD,\gamma)\to (\CD\otimes\CD,\gamma\otimes\gamma)$ be an equivariant isomorphism and $v_n\in\CU(\CD\otimes\CD)$ a sequence of unitaries such that
\[
\lim_{n\to\infty} \max_{g\in K} \|v_n-(\gamma\otimes\gamma)_g(v_n)\| = 0
\]
for every compact set $K\subseteq G$, and
\[
\phi(x)=\lim_{n\to\infty} v_n(x\otimes\eins)v_n^*
\]
for all $x\in\CD$.
The sequence of $*$-homomorphisms $\psi_n=\phi^{-1}\circ\ad(v_n^*)\circ(\eins_\CD\otimes\id_\CD)$ is clearly approximately equivariant as $n\to\infty$.
We also claim that it is an approximately central embedding.
For any $x,y\in\CD$, we observe
\[
\begin{array}{ccl}
\dst \lim_{k\to\infty} [x,\psi_k(y)] &=& \dst \lim_{k\to\infty} [\phi^{-1}(\phi(x)), \psi_k(y) ] \\
&=& \dst \lim_{k\to\infty} \phi^{-1}\big( v_n^*\cdot [x\otimes\eins, \eins\otimes y] \cdot v_n \big) \\
&=& 0.
\end{array}
\]
Hence the sequence $\psi_n$ induces a unital equivariant $*$-homomorphism $\psi=[(\psi_n)_n]: (\CD,\gamma)\to(\CD_{\infty,\gamma}\cap\CD', \gamma_\infty)$.

In order to show \ref{ssa-facts:2}, one carries out Toms-Winter's original proof from the non-equivariant case \cite[1.5]{TomsWinter07} verbatim, only replacing approximate unitary equivalence by approximate $G$-unitary equivalence.

Let us show \ref{ssa-facts:3}. 
By the previous two parts, it follows that $(\CD,\gamma)$ has approximately $G$-inner half-flip, and there is an equivariant unital $*$-homomorphism from $(\CD,\gamma)$ into the central sequence algebra of $A$.
Hence the claim follows directly from \ref{aih-ssa}\ref{aih-ssa:1}.
\end{proof}

\begin{prop}[cf.~{\cite[1.13]{TomsWinter07}}]
\label{commutator-unitaries}
Let $(\CD,\gamma)$ be a strongly self-absorbing \cstar-dynamical system. Then there exist sequences of unitaries $u_n,v_n\in\CU(\CD\otimes\CD)$ satisfying
\[
\max_{g\in K} \Big( \|u_n-(\gamma\otimes\gamma)_g(u_n)\|+\|v_n-(\gamma\otimes\gamma)_g(v_n)\| \Big) \stackrel{n\to\infty}{\longrightarrow} 0
\]
for every compact set $K\subset G$ and
\[
\ad(u_nv_nu_n^*v_n^*)(d\otimes\eins_\CD) \stackrel{n\to\infty}{\longrightarrow} \eins_\CD\otimes d
\]
for all $d\in\CD$.
\end{prop}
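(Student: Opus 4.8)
The plan is to realize the desired half-flip as a commutator of two half-flip unitaries acting on different pairs of tensor factors of a threefold tensor product, and then to fold the resulting maps back into $\CD\otimes\CD$. The two ingredients from strong self-absorption are as follows. By \ref{ssa-facts:2}, $(\CD,\gamma)$ has approximately $G$-inner half-flip, so I obtain a sequence $w_n\in\CU(\CD\otimes\CD)$ with $\ad(w_n)(d\otimes\eins_\CD)\to\eins_\CD\otimes d$ for all $d\in\CD$ and $\max_{g\in K}\|w_n-(\gamma\otimes\gamma)_g(w_n)\|\to0$ for every compact $K\subset G$. By the definition \ref{ssa system} I fix an equivariant isomorphism $\mu:(\CD,\gamma)\to(\CD\otimes\CD,\gamma\otimes\gamma)$, which is in particular approximately $G$-unitarily equivalent to the first-factor embedding. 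Using $\Theta=\id_\CD\otimes\mu$ I identify $(\CD\otimes\CD,\gamma\otimes\gamma)$ equivariantly with $(\CD\otimes\CD\otimes\CD,\gamma\otimes\gamma\otimes\gamma)$, so it suffices to build the unitaries on the threefold tensor product.

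On $\CD^{\otimes3}$ I would set $u_n=w_n^*$ embedded into the first and third factors (identity on the second) and $v_n=w_n^*$ embedded into the second and third factors (identity on the first); both are asymptotically $(\gamma\otimes\gamma\otimes\gamma)$-fixed on compacta, since $w_n$ is and the factor embeddings are equivariant. The point of these particular pairs and adjoints is that, evaluating $\ad(u_nv_nu_n^*v_n^*)$ on $d\otimes\eins\otimes\eins$ in the order $\ad(u_n)\,\ad(v_n)\,\ad(u_n^*)\,\ad(v_n^*)$, each stage either acts trivially (because the relevant pair of factors carries only units) or moves the single occupied factor in a direction controlled by $\ad(w_n)$ or $\ad(w_n^*)$: the occupant travels from the first factor to the third (via $\ad(u_n^*)$), then from the third to the second (via $\ad(v_n)$), while the two remaining stages are trivial. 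This yields $\ad(u_nv_nu_n^*v_n^*)(d\otimes\eins\otimes\eins)\to\eins\otimes d\otimes\eins$. Transporting back through $\Theta$ gives unitaries $\tilde u_n,\tilde v_n\in\CU(\CD\otimes\CD)$, still asymptotically $(\gamma\otimes\gamma)$-fixed, whose commutator satisfies $\ad(\tilde u_n\tilde v_n\tilde u_n^*\tilde v_n^*)(d\otimes\eins)\to\eins\otimes\mu^{-1}(d\otimes\eins)$.

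The remaining discrepancy is that $\mu^{-1}(d\otimes\eins)$ is only approximately $d$. The map $d\mapsto\mu^{-1}(d\otimes\eins_\CD)$ is a unital equivariant endomorphism of $(\CD,\gamma)$, so by \ref{ssa-facts:3} (applied with $A=\CD$, using that $\CD$ absorbs itself) it is approximately $G$-unitarily equivalent to $\id_\CD$; hence there are unitaries $r_n\in\CU(\CD)$, asymptotically $\gamma$-fixed, with $\ad(r_n)(\mu^{-1}(d\otimes\eins))\to d$. I would then replace $\tilde u_n,\tilde v_n$ by their conjugates $(\eins\otimes r_n)\tilde u_n(\eins\otimes r_n)^*$ and $(\eins\otimes r_n)\tilde v_n(\eins\otimes r_n)^*$; these remain asymptotically $(\gamma\otimes\gamma)$-fixed, their commutator is exactly $(\eins\otimes r_n)(\tilde u_n\tilde v_n\tilde u_n^*\tilde v_n^*)(\eins\otimes r_n)^*$, and since $\eins\otimes r_n^*$ commutes with $d\otimes\eins$ this commutator now sends $d\otimes\eins$ to $\eins\otimes\ad(r_n)(\mu^{-1}(d\otimes\eins))\to\eins\otimes d$, as required. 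A diagonal argument over finite subsets of $\CD$ and an exhaustion of $G$ by compact sets then produces a single pair of sequences with both asserted properties.

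I expect the main obstacle to be the directional bookkeeping in the commutator computation: because approximately $G$-inner half-flip only supplies the equivalence $\id_\CD\otimes\eins\ue{G}\eins\otimes\id_\CD$ in one direction, one must arrange the two factor pairs and the adjoints so that every nontrivial stage of $\ad(u_nv_nu_n^*v_n^*)$ invokes only the controlled direction of $\ad(w_n^{\pm1})$ and never its unknown action on an occupied second slot. The secondary point is the conjugation by $\eins\otimes r_n$ above, which is needed precisely because the splitting isomorphism $\mu$ can only be taken approximately equal to the first-factor embedding rather than equal to it, and which is designed so as to preserve the commutator form.
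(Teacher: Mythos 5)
Your argument is correct, and the commutator bookkeeping at its core checks out: with $u_n=w_{n,13}^*$ and $v_n=w_{n,23}^*$, the four stages of $\ad(u_nv_nu_n^*v_n^*)$ applied to $d\otimes\eins\otimes\eins$ are trivial, then $\ad(w_n)(d\otimes\eins)\approx\eins\otimes d$ on the factor pair $(1,3)$, then $\ad(w_n^*)(\eins\otimes d)\approx d\otimes\eins$ on the factor pair $(2,3)$, then trivial again, so indeed only the controlled direction of the one-sided half-flip is ever invoked. Your route is, however, genuinely different from the paper's in its mechanics. The paper first replaces $(\CD,\gamma)$ by its infinite tensor power via \ref{ssa-facts}\ref{ssa-facts:1} and works inside $\CD^{\otimes 4k}$: it chooses a half-flip unitary $\tilde u$ for $\CD^{\otimes k}$ and then a \emph{second} half-flip unitary $\tilde v$ for $\CD^{\otimes 2k}$ which approximately transports $\tilde u^*\otimes\eins$ to $\eins\otimes\tilde u^*$, so that the commutator of $\tilde u\otimes\eins$ and $\tilde v$ is approximately the elementary tensor $\tilde u\otimes\tilde u^*$, whose adjoint action realizes the half-flip; the residual misalignment of tensor factors is then removed by the honest equivariant flip automorphism $\sigma_{2,3}$, so no analogue of your correcting unitaries is needed. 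You instead apply both copies of the half-flip to the element $d$ itself on overlapping factor pairs of $\CD^{\otimes 3}$, and you pay for the splitting isomorphism $\mu$ being only approximately the first-factor embedding by invoking \ref{ssa-facts}\ref{ssa-facts:3} (legitimately, since $(\CD,\gamma)$ is conjugate to $(\CD\otimes\CD,\gamma\otimes\gamma)$) to obtain the asymptotically invariant unitaries $r_n$; conjugating $u_n$ and $v_n$ by $\eins\otimes r_n$ preserves both the commutator form and the approximate $(\gamma\otimes\gamma)$-invariance, so this correction is sound. The only point you should make explicit in a write-up is the order of quantifiers in the final diagonalization: for a given finite set $F\fin\CD$, compact $K\subset G$ and $\eps>0$ one first fixes an index for $r_n$ and only then chooses $n$ large enough for the commutator estimate, before assembling a single pair of sequences over an exhaustion of $\CD$ and $G$.
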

\begin{proof}
This arises as a modification of the proof of \ref{aih-ssa}\ref{aih-ssa:1}.
We consider $(A,\alpha)=(\CD\otimes\CD,\gamma\otimes\gamma)$ and the two unital equivariant $*$-homomorphisms $\phi_1=\id_\CD\otimes\eins_\CD$ and $\phi_2=\eins_\CD\otimes\id_\CD$ from $(\CD,\gamma)$ to $(A,\alpha)$.
Exactly as in the proof of \ref{aih-ssa}\ref{aih-ssa:1}, we pick a $*$-homomorphism $\kappa: (\CD,\gamma)\to (A_{\infty,\alpha}\cap A',\alpha_\infty)$ and arrive at the following diagram:
\[
\xymatrix{
&& \CD\otimes \CD \ar[rrd]^{\phi_1\times\kappa} && \\
\CD \ar[rru]^{\id\otimes\eins} \ar[rrd]_{\id\otimes\eins} \ar@/^0.70pc/[rrrr]^{\phi_1} \ar@/_0.70pc/[rrrr]_{\phi_2} &&&& A_{\infty,\alpha} \\
&& \CD\otimes \CD \ar[rru]_{\phi_2\times\kappa} &&
}
\]
We already know that $(\CD,\gamma)$ has approximately $G$-inner half-flip, so choose a sequence of unitaries $u_n\in\CU(A)$ with $\lim_{n\to\infty} \max_{g\in K} \|u_n-\alpha_g(u_n)\|=0$ for every compact subset $K\subseteq G$, and $\lim_{n\to\infty} u_n\phi_1(x)u_n^*=\phi_2(x)$ for all $x\in\CD$. 
Set $v_n=(\phi_1\times\kappa)(u_n)^*$.
Then, if $x\in\CD$ is an element and $n$ is a large enough number, we have
\[
\begin{array}{ccl}
\ad(u_nv_nu_n^*v_n)(\phi_1(x)) &=& \ad(u_nv_nu_n^*)\Big( (\phi_1\times\kappa)(u_n(x\otimes\eins)u_n^*) \Big) \\
&\approx& \ad(u_nv_nu_n^*)(\kappa(x)) \\
&=& \ad(u_nv_n)(\kappa(x)) \\
&=& \ad(u_n)\Big( (\phi_1\times\kappa)(u_n^*(\eins\otimes x)u_n) \Big) \\
&\approx& \ad(u_n)(\phi_1(x)) \\
&\approx& \phi_2(x).
\end{array}
\]
In this construction, the sequence $v_n$ is in $A_{\infty,\alpha}$.
However, we may represent each of them by a sequence of unitaries.
Due to \ref{approx-fixed-b}, we may run a standard diagonal sequence argument a arrive at a sequence of unitaries $u_n,v_n\in\CU(A)$ satisfying the desired properties as stated in the claim.
\end{proof}

In particular, the means that in strongly self-absorbing \cstar-dynamical systems, the approximate $G$-inner half-flip may be chosen to be implemented by unitaries representing the zero-class in $K_1$, similarly as in \cite{TomsWinter07}. By $K_1$-injectivity of strongly self-absorbing \cstar-algebras, which follows from \cite{Jiang, Winter11}, we get the following consequence:

\begin{cor}[cf.~\cite{Jiang} and {\cite[3.1, 3.3]{Winter11}}]
\label{1-hom-unitaries}
Let $(\CD,\gamma)$ be a strongly self-absorbing \cstar-dynamical system. Then there exists a sequence of unitaries $v_n\in\CU_0(\CD\otimes\CD)$ satisfying
\[
\max_{g\in K}\|v_n-(\gamma\otimes\gamma)_g(v_n)\| \stackrel{n\to\infty}{\longrightarrow} 0
\]
for every compact set $K\subset G$ and
\[
v_n(d\otimes\eins_\CD)v_n^* \stackrel{n\to\infty}{\longrightarrow} \eins_\CD\otimes d
\]
for all $d\in\CD$.
\end{cor}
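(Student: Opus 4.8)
The plan is to read off the required unitaries from the commutator unitaries already produced in Proposition~\ref{commutator-unitaries}, and then to pay the single price of invoking $K_1$-injectivity in order to land inside the connected component of the identity. Concretely, I would take the sequences $u_n, v_n \in \CU(\CD\otimes\CD)$ furnished by \ref{commutator-unitaries} and set $w_n = u_n v_n u_n^* v_n^*$. The second conclusion of \ref{commutator-unitaries} then reads $w_n(d\otimes\eins_\CD)w_n^* \to \eins_\CD\otimes d$ for all $d\in\CD$, which is exactly the asymptotic half-flip relation we want, so the only two points left to check are the approximate invariance of $w_n$ and its membership in $\CU_0(\CD\otimes\CD)$.

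For the approximate invariance, I would use that every $(\gamma\otimes\gamma)_g$ is a $*$-homomorphism together with the fact that all factors are unitaries. Writing $\sigma = (\gamma\otimes\gamma)_g$ and replacing the four factors of $w_n$ one at a time by their $\sigma$-images, the triangle inequality gives
\[
\| w_n - \sigma(w_n)\| \leq 2\|u_n - \sigma(u_n)\| + 2\|v_n - \sigma(v_n)\|,
\]
using $\|u_n^* - \sigma(u_n)^*\| = \|u_n - \sigma(u_n)\|$ and likewise for $v_n$. Taking the supremum over a compact set $K\subset G$ and applying the first conclusion of \ref{commutator-unitaries} yields $\max_{g\in K}\|w_n - (\gamma\otimes\gamma)_g(w_n)\| \to 0$.

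For membership in $\CU_0$, I would argue $K$-theoretically. Since $K_1(\CD\otimes\CD)$ is an abelian group in which $[u^*] = -[u]$, any commutator of unitaries represents the zero class, so $[w_n] = 0$ in $K_1(\CD\otimes\CD)$. Because $\CD$ is strongly self-absorbing we have $\CD\otimes\CD\cong\CD$, so $\CD\otimes\CD$ is again strongly self-absorbing and hence $K_1$-injective by \cite{Jiang, Winter11}; that is, the natural map $\CU(\CD\otimes\CD)/\CU_0(\CD\otimes\CD)\to K_1(\CD\otimes\CD)$ is injective. Consequently $[w_n]=0$ forces $w_n\in\CU_0(\CD\otimes\CD)$, and $(w_n)_n$ is the sequence we sought.

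I do not expect a genuine obstacle here: Proposition~\ref{commutator-unitaries} was engineered precisely to present the half-flip as a commutator, and the entire remaining content is the external input that strongly self-absorbing \cstar-algebras are $K_1$-injective. The one subtlety worth stating explicitly is the reduction $\CD\otimes\CD\cong\CD$, which is what lets me quote $K_1$-injectivity for the tensor square rather than for $\CD$ itself.
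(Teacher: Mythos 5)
Your proposal is correct and is precisely the argument the paper intends: the remark preceding Corollary \ref{1-hom-unitaries} says exactly that the commutators from Proposition \ref{commutator-unitaries} represent the zero class in $K_1$ and that $K_1$-injectivity of the strongly self-absorbing algebra $\CD\otimes\CD\cong\CD$ (via \cite{Jiang, Winter11}) then places them in $\CU_0(\CD\otimes\CD)$. Your telescoping estimate for the approximate invariance and the explicit reduction to $K_1$-injectivity of the tensor square are exactly the details the paper leaves implicit.
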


The following constitutes the main result of this paper:

\begin{theorem}
\label{equi-McDuff}
Let $G$ be a second-countable, locally compact group. Let $A$ be a separable \cstar-algebra and $(\alpha,u): G\curvearrowright A$ a cocycle action. Let $\CD$ be a separable, unital \cstar-algebra and $\gamma: G\curvearrowright\CD$ an action such that $(\CD,\gamma)$ is strongly self-absorbing.
The following are equivalent:
\begin{enumerate}[label=\textup{(\roman*)},leftmargin=*] 
\item $(A,\alpha,u)$ is strongly cocycle conjugate to $(A\otimes\CD,\alpha\otimes\gamma,u\otimes\eins_\CD)$. \label{equi-McDuff1}
\item $(A,\alpha,u)$ is cocycle conjugate to $(A\otimes\CD,\alpha\otimes\gamma,u\otimes\eins_\CD)$. \label{equi-McDuff2}
\item There exists a unital, equivariant $*$-homomorphism from $(\CD,\gamma)$ to $\big( F_{\infty,\alpha}(A), \tilde{\alpha}_\infty \big)$. \label{equi-McDuff3}
\item The first-factor embedding $\id_A\otimes\eins_\CD: (A,\alpha,u)\to (A\otimes\CD,\alpha\otimes\gamma,u\otimes\eins_\CD)$ is approximately unitarily equivalent to an isomorphism inducing strong cocycle conjugacy. \label{equi-McDuff4}
\end{enumerate}
\end{theorem}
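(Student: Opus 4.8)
The plan is to establish the cycle of implications $\ref{equi-McDuff1}\Rightarrow\ref{equi-McDuff2}\Rightarrow\ref{equi-McDuff3}\Rightarrow\ref{equi-McDuff4}\Rightarrow\ref{equi-McDuff1}$. The two implications flanking the hard core are essentially definitional. For $\ref{equi-McDuff4}\Rightarrow\ref{equi-McDuff1}$, an approximate unitary equivalence between $\id_A\otimes\eins_\CD$ and an isomorphism inducing strong cocycle conjugacy in particular witnesses that $(A,\alpha,u)$ and $(A\otimes\CD,\alpha\otimes\gamma,u\otimes\eins_\CD)$ are strongly cocycle conjugate. And $\ref{equi-McDuff1}\Rightarrow\ref{equi-McDuff2}$ is immediate, since a strong exterior equivalence is a special case of an exterior equivalence.

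For $\ref{equi-McDuff2}\Rightarrow\ref{equi-McDuff3}$, I would first invoke Remark \ref{cont-central-seq}: a cocycle conjugacy $(A,\alpha,u)\cc(A\otimes\CD,\alpha\otimes\gamma,u\otimes\eins_\CD)$ induces an equivariant isomorphism $(F_\omega(A),\tilde\alpha_\omega)\cong(F_\omega(A\otimes\CD),\widetilde{(\alpha\otimes\gamma)}_\omega)$, which restricts to the continuous parts. So it suffices to exhibit a unital equivariant $*$-homomorphism $(\CD,\gamma)\to(F_{\omega,\alpha\otimes\gamma}(A\otimes\CD),\widetilde{(\alpha\otimes\gamma)}_\omega)$. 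Using \ref{ssa-facts}\ref{ssa-facts:1} to identify $(A\otimes\CD,\alpha\otimes\gamma)$ with $(A\otimes\bigotimes_\IN\CD,\alpha\otimes\bigotimes_\IN\gamma)$, the natural candidate is the tensor-shift central sequence sending $d\in\CD$ to the class of $(\eins_A\otimes\eins_\CD^{\otimes n}\otimes d\otimes\eins_\CD^{\otimes\infty})_n$. Centrality holds because elements supported on finitely many legs are eventually commuted past, equivariance follows from the diagonal form of $\bigotimes_\IN\gamma$, and the image lands in the continuous part because $\|\gamma_g(d)-\gamma_{g_0}(d)\|$ bounds each leg uniformly.

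The core of the theorem is $\ref{equi-McDuff3}\Rightarrow\ref{equi-McDuff4}$, which I would prove by verifying the hypotheses of the one-sided intertwining Lemma \ref{one side} for the injective, non-degenerate, equivariant first-factor embedding $\phi=\id_A\otimes\eins_\CD:(A,\alpha,u)\to(A\otimes\CD,\alpha\otimes\gamma,u\otimes\eins_\CD)$. Let $\theta:(\CD,\gamma)\to(F_{\omega,\alpha}(A),\tilde\alpha_\omega)$ be the given unital equivariant $*$-homomorphism. The key observation is that inside the central sequence machinery of $A\otimes\CD$ there live two commuting copies of $\CD$: the \emph{central} copy $\theta(\CD)\otimes\eins_\CD$, coming from $\theta$ and realized via the module structure of Remark \ref{kirchberg-projection}, and the \emph{external} copy $\eins_A\otimes\CD$, which commutes with $\phi(A)=A\otimes\eins_\CD$ on the nose. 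I would realize the half-flip unitaries $v_n\in\CU_0(\CD\otimes\CD)$ from Corollary \ref{1-hom-unitaries} through the $(\gamma\otimes\gamma)$-to-$(\alpha\otimes\gamma)_\omega$ equivariant $*$-homomorphism $\CD\otimes\CD\to(A\otimes\CD)_\omega$ determined by $d_1\otimes d_2\mapsto\theta(d_1)\cdot(\eins_A\otimes d_2)$. The resulting unitaries $z_n$ then, at the central-sequence level, approximately commute with $\phi(A)$ (giving \ref{eq:oneside-a}), satisfy $z_n^*(\eins_A\otimes d)z_n\approx\theta(d)\otimes\eins_\CD\in\phi(A)_\omega$ and hence conjugate $A\otimes\CD$ approximately into $\phi(A)$ (giving \ref{eq:oneside-b}), and are approximately $(\alpha\otimes\gamma)_\omega$-fixed on compact sets because $v_n$ is approximately $(\gamma\otimes\gamma)$-fixed (giving \ref{eq:oneside-c}). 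Crucially, because each $v_n$ lies in $\CU_0$ and $\theta(d_1)$ lifts to a sequence in $A$, writing $v_n$ as a product of exponentials $\prod_j\exp(ih_j)$ shows that $z_n=\prod_j\exp(i\theta\!-\!\text{realization of }h_j)$ is represented by genuine unitaries in $\widetilde{A\otimes\CD}$ rather than merely in $\CM(A\otimes\CD)$. This is exactly what is needed to invoke the second, stronger conclusion of Lemma \ref{one side}, yielding the approximate unitary (not just multiplier-unitary) equivalence demanded by \ref{equi-McDuff4}. To convert the central-sequence estimates into honest norm estimates along a representing sequence, in particular for the fixed-point condition \ref{eq:oneside-c}, I would apply Lemma \ref{approx-fixed} together with a diagonal sequence argument, just as in the proof of \ref{equi mcduff}.

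I expect the main obstacle to lie entirely in $\ref{equi-McDuff3}\Rightarrow\ref{equi-McDuff4}$, caused by the non-unitality of $A$ and the passage to the corrected central sequence algebra. One must make the realization map $\CD\otimes\CD\to(A\otimes\CD)_\omega$ well-defined modulo annihilators using the $D_{\omega,A}$-module picture of Remark \ref{kirchberg-projection}, check that the approximate relations survive this quotient once multiplied against $\phi(A)$, and — most delicately — control the $(\alpha\otimes\gamma)_\omega$-equivariance along representing sequences, where the failure of point-norm continuity of the induced action on the sequence algebra forces precisely the Baire-category lifting encapsulated in Lemma \ref{approx-fixed}. The remaining implications are comparatively routine.
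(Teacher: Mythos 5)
Your proposal is correct and follows essentially the same route as the paper: the same cycle of implications, with the core step \ref{equi-McDuff3}$\Rightarrow$\ref{equi-McDuff4} handled by realizing the approximately $G$-invariant half-flip unitaries of \ref{1-hom-unitaries} through the two commuting copies of $\CD$ (the central one from the hypothesis via \ref{kirchberg-projection}, the external tensor factor) inside $F(A\otimes\eins_\CD,(A\otimes\CD)_\omega)$, lifting them to genuine unitaries in the unitization via the $\CU_0$/exponential trick, passing to representing sequences with \ref{approx-fixed}, and feeding the result into Lemma \ref{one side}. The only cosmetic differences are the swap of the two tensor factors of $\CD\otimes\CD$ and the (unnecessary but harmless) diagonal-sequence phrasing; the paper extracts a single index $n$ per choice of $(\eps,F_1,F_2,K)$ directly.
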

\begin{proof}
The implications from \ref{equi-McDuff4} to \ref{equi-McDuff1} and from \ref{equi-McDuff1} to \ref{equi-McDuff2} are trivial. The implication from \ref{equi-McDuff2} to \ref{equi-McDuff3} follows from \ref{ssa-facts}\ref{ssa-facts:1} with \ref{cont-central-seq}. So we need to verify the implication from \ref{equi-McDuff3} to \ref{equi-McDuff4}.

Keeping in mind \ref{F(A)-enhanced}, we have a natural isomorphism
\[
F(\eins_\CD\otimes A,(\CD\otimes A)_\infty)\cong F(\eins_\CD\otimes A, \big( (\CD\otimes A)^\sim \big)_\infty ).
\]
Denote by $\pi: \big( (\CD\otimes A)^\sim \big)_\infty\cap (\eins_\CD\otimes A)' \to F(\eins_\CD\otimes A,(\CD\otimes A)_\infty)$ the canonical surjection. Note that by assumption, we have an equivariant, unital $*$-homomorphism from $(\CD,\gamma)$ to $F_{\infty,\alpha}(A)$. Consider the canonical inclusions 
\[
F_{\infty}(A) ,~\CD ~\subset~ F(\eins_\CD\otimes A,(\CD\otimes A)_\infty),
\]
which define commuting \cstar-subalgebras. Since these inclusions are natural, they are equivariant with respect to the induced actions of $\alpha$, $\gamma$ and $\gamma\otimes\alpha$.
By assumption, it follows we have a unital and equivariant $*$-homomorphism
\[
\phi: (\CD\otimes\CD,\gamma\otimes\gamma)\to \big( F_{\gamma\otimes\alpha}(\eins_\CD\otimes A,(\CD\otimes A)_\infty) , (\gamma\otimes\alpha)^\sim_\infty \big)
\]
satisfying $\phi(d\otimes\eins_D)\cdot (\eins_\CD\otimes a)=d\otimes a$ and $\phi(\eins_\CD\otimes d)\cdot (\eins_\CD\otimes a)\in\eins_{\CD}\otimes A_\infty$ for all $a\in A$ and $d\in\CD$. (Note that forming these products makes sense because of \ref{kirchberg-projection}.) 

Now let $\eps>0$, $F_1\fin\CD$, $F_2\fin A$ and $K\subset G$ compact. Without loss of generality, assume that $F_1$ and $F_2$ consist of contractions.
Since $(\CD,\gamma)$ has approximately $G$-inner half-flip, choose a unitary $v\in \CD\otimes\CD$ with 
\[
\max_{g\in K}\|v-(\gamma\otimes\gamma)_g(v)\|\leq\eps \quad\text{and}\quad v^*(d\otimes\eins_\CD)v =_\eps \eins_\CD\otimes d
\]
for all $d\in F_1$. Moreover, because of \ref{1-hom-unitaries}, we may assume that $v$ is homotopic to $\eins_{\CD\otimes\CD}$ in $\CU(\CD\otimes\CD)$.

The unitary $u=\phi(v)\in F_{\gamma\otimes\alpha}(\eins_\CD\otimes A,(\CD\otimes A)_\infty)$ then satisfies
\[
u^*(d\otimes a)u = \phi(v(d\otimes\eins_\CD)v^*)\cdot (\eins_\CD\otimes a) =_\eps \phi(\eins_\CD\otimes d)\cdot (\eins_\CD\otimes a) \in \eins_\CD\otimes A_\infty
\] 
for all $a\in A$ with $\|a\|\leq 1$ and $d\in F_1$, and moreover
\[
\|u-(\gamma\otimes\alpha)^\sim_{\infty,g}(u)\| \leq \|v-(\gamma\otimes\gamma)_g(v)\| \leq\eps
\] 
for all $g\in K$.

Let $z\in \big( (\CD\otimes A)^\sim \big)_\infty\cap (\eins_\CD\otimes A)'$ be an element with $u=\pi(z)$. Since we have chosen $v$ to be homotopic to $\eins_{\CD\otimes\CD}$, $u$ is also homotopic to $\eins$ in $F\big( \eins_\CD\otimes A, (\CD\otimes A)_\infty \big)$, and therefore we can choose the lift $z=[(z_n)_n]$ to be represented by a sequence of unitaries. 
Note that $u$ is a continuous element with respect to $(\gamma\otimes\alpha)^\sim_\infty$, so it follows that $[g\mapsto (\eins_\CD\otimes a)\cdot (\gamma\otimes\alpha)_{\infty,g}(z)]$ is a continuous map on $G$ for every $a\in A$. Moreover, because of the properties of $u$, we have
\[
\dist(z^*(d\otimes a)z, \eins_\CD\otimes A_\infty)\leq\eps
\] 
for all $a\in A$ with $\|a\|\leq 1$ and $d\in F_1$, and moreover
\[
\|(\eins_\CD\otimes a)\big( z-(\gamma\otimes\alpha)_{\infty,g}(z) \big)\| \leq\eps
\] 
for all $g\in K$ and $a\in A$ with $\|a\|\leq 1$.
  
Let $z_n\in (\CD\otimes A)^\sim$ be a bounded sequence representing $z$. Using \ref{approx-fixed}, there is some $n$ with
\begin{itemize}
\item $\|[z_n, \eins_\CD\otimes a]\|\leq\eps$ for all $a\in F_2$;
\item $\dist( z_n^*(d\otimes a)z_n, \eins_\CD\otimes A)\leq 2\eps$ for all $d\in F_1$ and $a\in F_2$;
\item $\dst\max_{g\in K}\|z_n-(\gamma\otimes\alpha)_g(z_n)\| \leq 2\eps$.
\end{itemize}
So we have met the conditions of \ref{eq:oneside-a}, \ref{eq:oneside-b} and \ref{eq:oneside-c} for the equivariant embedding $\eins_\CD\otimes\id_A: (A,\alpha,u)\to (D\otimes A, \gamma\otimes\alpha, \eins_D\otimes u)$. This finishes the proof. 
\end{proof}

\begin{cor}
Let $G$ be a second-countable, locally compact group. Let $A$ be a separable \cstar-algebra and $(\alpha,u): G\curvearrowright A$ a cocycle action. Let $\CD$ be a strongly self-absorbing \cstar-algebra. The following are equivalent:
\begin{enumerate}[label=\textup{(\roman*)},leftmargin=*] 
\item $(A,\alpha,u)$ is strongly cocycle conjugate to $(A\otimes\CD,\alpha\otimes\id_\CD,u\otimes\eins_\CD)$ 
\item $(A,\alpha,u)$ is cocycle conjugate to $(A\otimes\CD,\alpha\otimes\id_\CD,u\otimes\eins_\CD)$ 
\item There exists a unital, equivariant $*$-homomorphism from $\CD$ to the fixed point algebra $F_{\infty,\alpha}(A)^{\tilde{\alpha}_\infty}$
\item The first-factor embedding $\id_A\otimes\eins_\CD: (A,\alpha,u)\to (A\otimes\CD,\alpha\otimes\id_\CD,u\otimes\eins_\CD)$ is approximately unitarily equivalent to an isomorphism inducing strong cocycle conjugacy.
\end{enumerate}
\end{cor}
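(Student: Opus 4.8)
The plan is to obtain this corollary as an immediate specialization of the main Theorem \ref{equi-McDuff}. Given a strongly self-absorbing \cstar-algebra $\CD$, I would equip it with the trivial $G$-action $\gamma\colon g\mapsto\id_\CD$. By the Example following Definition \ref{ssa system}, the system $(\CD,\gamma)$ is then a strongly self-absorbing \cstar-dynamical system, so all the hypotheses of Theorem \ref{equi-McDuff} are satisfied with this particular choice of $\gamma$. Since $\gamma_g=\id_\CD$ for all $g$, the tensorial stabilization $(A\otimes\CD,\alpha\otimes\gamma,u\otimes\eins_\CD)$ appearing in that theorem is literally $(A\otimes\CD,\alpha\otimes\id_\CD,u\otimes\eins_\CD)$, so conditions (i), (ii) and (iv) of the present corollary coincide verbatim with the corresponding conditions of Theorem \ref{equi-McDuff}.

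The only point that requires a short observation is condition (iii). Here I would note that an equivariant, unital $*$-homomorphism $\phi\colon(\CD,\gamma)\to(F_{\omega,\alpha}(A),\tilde\alpha_\omega)$ with $\gamma$ trivial is, by the definition of equivariance, a unital $*$-homomorphism satisfying $\tilde\alpha_{\omega,g}(\phi(d))=\phi(\gamma_g(d))=\phi(d)$ for all $d\in\CD$ and $g\in G$. This is precisely the statement that the image of $\phi$ lies in the fixed-point algebra $F_{\omega,\alpha}(A)^{\tilde\alpha_\omega}$ (and any fixed element automatically has a constant, hence continuous, orbit, so it indeed belongs to the continuous part $F_{\omega,\alpha}(A)$). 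Conversely, any unital $*$-homomorphism from $\CD$ into $F_{\omega,\alpha}(A)^{\tilde\alpha_\omega}$ is automatically equivariant for the trivial source action. Thus condition (iii) of the corollary is nothing but condition (iii) of Theorem \ref{equi-McDuff} under the specialization $\gamma=\id_\CD$.

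Having matched all four conditions with their counterparts in Theorem \ref{equi-McDuff}, the claimed equivalences follow directly from that theorem. I do not expect any genuine obstacle in this argument: no new analytic input is needed beyond the main theorem, and the entire content of the proof is the elementary bookkeeping observation that equivariance with respect to a trivial action on the source is the same as factoring through the fixed-point algebra of the target.
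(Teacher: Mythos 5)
Your proposal is correct and is precisely the argument the paper intends: the corollary is stated without proof as an immediate specialization of Theorem \ref{equi-McDuff} to the trivial action $\gamma=\id_\CD$ (which is strongly self-absorbing by the Example following Definition \ref{ssa system}), with the only content being the observation that equivariance for a trivial source action amounts to factoring through the $\tilde\alpha_\omega$-fixed-point algebra, whose elements are automatically continuous. No gaps.
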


We finish this section with an example that illustrates how for non-compact acting groups, our main result \ref{equi-McDuff} cannot be strengthened to characterize tensorial absorption with respect to conjugacy:

\begin{example}
Let $\theta\in (0,1)$ be an irrational number. Consider the Cuntz algebra $\CO_2$ (see \cite{Cuntz77}) with the two canonical generators $s_1,s_2\in\CO_2$. Let the automorphism $\alpha\in\Aut(\CO_2)$ be given by $\alpha(s_j)=e^{2\pi i\theta}s_j$ for $j=1,2$. Then the associated $\IZ$-action $\alpha: \IZ\curvearrowright\CO_2$ arises as the composition of the irrational rotation embedding $\IZ\to\IT$ as a dense subgroup, with the natural gauge action of $\IT$ on $\CO_2$ given by $z.s_j=z\cdot s_j$ for $j=1,2$. So we have $\CO_2^\alpha\cong M_{2^\infty}$ because the fixed-point algebra of the gauge action is well-known to be the CAR algebra.
On the other hand, $\alpha$ is quasi-free and faithful, so in particular pointwise outer, see \cite[Section 2]{Evans80}. Since any endomorphism on $\CO_2$ is asymptotically inner and $\CO_2\cong\CO_2\otimes\CO_2$, it follows from Nakamura's uniqueness result \cite[Theorem 9]{Nakamura00} that $\alpha$ is cocycle conjugate to $\alpha\otimes\id_{\CO_2}$. Since the fixed-point algebra of $\alpha$ is stably finite, these two actions clearly cannot be conjugate.
\end{example}


\section{Semi-strongly self-absorbing actions}
\noindent
As we have seen in the previous section, strong cocycle conjugacy is in general a strictly weaker relation than conjugacy. In the non-equivariant context, this distinction naturally plays no role. So we see that this aspect adds some complexity to the equivariant situation. For instance, it follows from several results from the literature (see the fifth section for details) that many actions on a strongly self-absorbing \cstar-algebra can be strongly cocycle conjugate (but not conjugate) to a strongly self-absorbing action. This prompts the question whether one can expand the tensorial absorption theorem from the previous section to a larger class of actions. In this section, we will therefore study \cstar-dynamical systems that are, in a sense, almost strongly self-absorbing:

\begin{defi}
Let $G$ be a second-countable, locally compact group and $\CD$ a separable, unital \cstar-algebra. We call an action $\gamma: G\curvearrowright\CD$ semi-strongly self-absorbing, if it is strongly cocycle conjugate to a strongly self-absorbing action.
\end{defi}

We will study some properties that semi-strongly self-absorbing actions inherit from strongly self-absorbing actions. However, we first have to make a few general observations.

\begin{lemma}
\label{scc-approx-conjugate}
Let $G$ be a second-countable, locally compact group and $A$ and $B$ two separable, unital \cstar-algebras. Suppose that $\alpha: G\curvearrowright A$ and $\beta: G\curvearrowright B$ are two actions that are strongly cocycle conjugate. Then for every $\eps>0$ and compact set $K\subset G$, there exists an isomorphism $\phi: A\to B$ with $\max_{g\in K} \|\beta_g\circ\phi-\phi\circ\alpha_g\|\leq\eps$.
\end{lemma}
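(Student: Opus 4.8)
The plan is to unravel the definition of strong cocycle conjugacy into a single isomorphism together with an approximate-coboundary cocycle, and then to realize the desired map $\phi$ as a suitable inner perturbation of that isomorphism for a large enough index. (I read the hypothesis as an action $\beta\colon G\curvearrowright B$; the algebra $A$ carrying $\beta$ in the displayed statement is a typo, as is clear from the conclusion $\phi\colon A\to B$.)

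First I would unpack the hypothesis. Since $\alpha$ and $\beta$ are genuine actions, strong cocycle conjugacy provides an isomorphism $\psi\colon A\to B$ such that, writing $\alpha'=\psi\circ\alpha\circ\psi^{-1}$ for the transported action on $B$, the actions $\alpha'$ and $\beta$ are strongly exterior equivalent. Concretely this yields an $\alpha'$-$1$-cocycle $v\colon G\to\CU(\CM(B))$ with $\beta_g=\ad(v_g)\circ\alpha'_g$ for all $g$, together with a sequence of unitaries $x_n\in\CU(\CM(B))$ such that $x_n\alpha'_g(x_n^*)\strict v_g$ for all $g\in G$, uniformly on compact subsets. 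Since $B$ is unital, $\CM(B)=B$ and the strict topology coincides with the norm topology, so this last statement reads $\sup_{g\in K}\|x_n\alpha'_g(x_n^*)-v_g\|\to 0$ as $n\to\infty$, for every compact $K\subset G$.

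Next, for a fixed large $n$ I would set $\phi=\ad(x_n)\circ\psi\colon A\to B$; this is an isomorphism because $x_n$ is a unitary in the unital algebra $B$. Using the defining relation $\psi\circ\alpha_g=\alpha'_g\circ\psi$ and the elementary identity $\alpha'_g\circ\ad(x_n)=\ad(\alpha'_g(x_n))\circ\alpha'_g$, both relevant compositions reduce to an inner automorphism followed by $\alpha'_g\circ\psi$:
\[
\phi\circ\alpha_g=\ad(x_n)\circ\alpha'_g\circ\psi,\qquad \beta_g\circ\phi=\ad\!\big(v_g\alpha'_g(x_n)\big)\circ\alpha'_g\circ\psi.
\]
Since $\alpha'_g\circ\psi$ is an isometric isomorphism, and since for unitaries $p,q$ one has $\|\ad(p)-\ad(q)\|\le 2\|p-q\|$ on contractions, I obtain the bound
\[
\|\beta_g\circ\phi-\phi\circ\alpha_g\|\le 2\,\big\|x_n-v_g\alpha'_g(x_n)\big\|=2\,\big\|x_n\alpha'_g(x_n^*)-v_g\big\|,
\]
the last equality by right-multiplying inside the norm by the unitary $\alpha'_g(x_n)$. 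Taking the supremum over $g\in K$ and choosing $n$ so that $\sup_{g\in K}\|x_n\alpha'_g(x_n^*)-v_g\|\le\eps/2$ then yields $\max_{g\in K}\|\beta_g\circ\phi-\phi\circ\alpha_g\|\le\eps$, as required.

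The computation is entirely routine, and I expect no genuine obstacle here; in particular none of the Baire-category or reindexation machinery of the earlier sections is needed, because the whole argument takes place at the level of honest unitaries in the unital algebra $B$. The only points requiring care are to keep all estimates uniform over the compact set $K$, and to exploit unitality of $B$ so that the strict convergence in the approximate-coboundary condition upgrades to genuine norm estimates on the operator norms $\|\beta_g\circ\phi-\phi\circ\alpha_g\|$.
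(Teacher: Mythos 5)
Your proof is correct and follows essentially the same route as the paper's: both unpack strong cocycle conjugacy into an isomorphism $\psi$ plus an approximate coboundary, perturb $\psi$ by $\ad$ of one of the approximating unitaries for a large index, and use uniform-on-compacts convergence together with the elementary estimate $\|\ad(p)-\ad(q)\|\le 2\|p-q\|$ (which the paper leaves implicit). The only cosmetic difference is the direction in which the cocycle perturbation is written (the paper conjugates by $x_n^*$ with the cocycle attached to $\beta$, you conjugate by $x_n$ with the cocycle attached to the transported action), and you correctly note the typo $\beta\colon G\curvearrowright A$ for $\beta\colon G\curvearrowright B$.
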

\begin{proof}
Using that $\alpha$ and $\beta$ are strongly cocycle conjugate, we find an isomorphism $\psi: A\to B$ and a sequence of unitaries $x_n\in\CU(B)$ such that the functions $[g\mapsto x_n\beta_g(x_n^*)]$ on $G$ converge uniformly on compact sets as $n$ tends to infinity, such that with $w_g=\lim_{n\to\infty} x_n\beta_g(x_n^*)$, we have
\[
\ad(w_g)\circ\beta_g\circ\psi = \psi\circ\alpha_g\quad\text{for all}~g\in G.
\]
Applying $\ad(x_n^*)$ to both sides of this equation, we obtain
\[
\ad(x_n^*)\circ\psi\circ\alpha_g = \ad(x_n^*w_g)\circ\beta_g\circ\psi = \ad(x_n^*w_g\beta_g(x_n))\circ\beta_g\circ\ad(x_n^*)\circ\psi.
\]
Let $\eps>0$ and $K\subset G$ be given. By the definition of the cocycle $w_g$, there is some $n$ such that $\max_{g\in K} \|x_n^*w_g\beta_g(x_n)-\eins_B\|\leq\eps/2$. Then the map $\phi=\ad(x_n^*)\circ\psi$ has the properties that we are looking for.
\end{proof}

\begin{lemma}
\label{scc-aih}
Let $G$ be a second-countable, locally compact group and $A$ and $B$ two separable, unital \cstar-algebras. Suppose that $\alpha: G\curvearrowright A$ and $\beta: G\curvearrowright B$ are two actions that are strongly cocycle conjugate. If $\beta$ has approximately $G$-inner half-flip, then so does $\alpha$.
\end{lemma}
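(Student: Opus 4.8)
The plan is to exploit the fact that, for genuine actions, a strong cocycle conjugacy decomposes into a genuine conjugacy together with a cocycle perturbation by an \emph{approximate coboundary}, and to verify that approximately $G$-inner half-flip survives each of these two operations separately. Concretely, write the strong cocycle conjugacy as an isomorphism $\phi\colon A\to A$ with $\phi\circ\alpha\circ\phi^{-1}=\beta^w$ for some $\beta$-$1$-cocycle $w$ that is an approximate coboundary (this is exactly the definition of strong exterior equivalence of genuine actions, after exchanging roles if necessary). It then suffices to establish: (A) if $\gamma'$ is conjugate to an action $\gamma$ having approximately $G$-inner half-flip, then $\gamma'$ has it too; and (B) if $\beta$ has approximately $G$-inner half-flip and $w$ is an approximate coboundary $\beta$-$1$-cocycle, then $\beta^w$ has it. Granting these, one applies (B) to $\beta^w$ and then transports the property back along $\phi^{-1}$ via (A) to conclude for $\alpha$.

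Step (A) should be exact and immediate. If $\gamma'_g=\phi^{-1}\circ\gamma_g\circ\phi$ and unitaries $v_n\in\CU(A\otimes A)$ witness the half-flip for $\gamma$, then I would take $v'_n=(\phi^{-1}\otimes\phi^{-1})(v_n)$. Since $\phi^{-1}\otimes\phi^{-1}$ is an isometric $*$-isomorphism intertwining $(\gamma\otimes\gamma)_g$ with $(\gamma'\otimes\gamma')_g$, both the spatial relation $v'_n(a\otimes\eins)(v'_n)^*\to\eins\otimes a$ and the approximate invariance $\max_{g\in K}\|(\gamma'\otimes\gamma')_g(v'_n)-v'_n\|\to 0$ follow verbatim from the corresponding statements for $\gamma$, with no loss.

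For step (B) I would use the standard $\eps$-$F$-$K$ reformulation of approximately $G$-unitary equivalence and reduce to genuine coboundaries by a limiting argument. Write $w_g=\lim_n x_n\beta_g(x_n^*)$ uniformly on compact sets for unitaries $x_n\in\CU(A)$, and put $w^{(n)}_g=x_n\beta_g(x_n^*)$; these are honest coboundaries, so $\beta^{w^{(n)}}=\ad(x_n)\circ\beta\circ\ad(x_n)^{-1}$ is genuinely conjugate to $\beta$ and therefore has approximately $G$-inner half-flip by (A). Given $\eps>0$, $F\fin A$ and a compact $K\subset G$, I would pick $n$ with $\sup_{g\in K}\|w_g-w^{(n)}_g\|\le\eps/8$ together with a unitary $v\in\CU(A\otimes A)$ realizing the half-flip for $\beta^{w^{(n)}}$ to within $\eps/2$ on $F$ and $K$. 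The spatial condition is unchanged upon passing to $\beta^w$ (it involves only $v$, not the action). For the equivariance estimate, write $(\beta^w\otimes\beta^w)_g=\ad(w_g\otimes w_g)\circ(\beta\otimes\beta)_g$ and likewise for $w^{(n)}$, so the two differ solely by conjugation by $w_g\otimes w_g$ versus $w^{(n)}_g\otimes w^{(n)}_g$; the elementary bound $\|\ad(s)(y)-\ad(t)(y)\|\le 2\|s-t\|\,\|y\|$ for unitaries then gives $\|(\beta^w\otimes\beta^w)_g(v)-(\beta^{w^{(n)}}\otimes\beta^{w^{(n)}})_g(v)\|\le 4\|w_g-w^{(n)}_g\|\le\eps/2$ for every $g\in K$, and combining yields the half-flip condition for $\beta^w$ to within $\eps$.

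The decisive point, and the main obstacle, is exactly this equivariance estimate. The naive route, using Lemma \ref{scc-approx-conjugate} to replace $\phi$ by an almost-equivariant isomorphism $\psi$ and setting $w=(\psi^{-1}\otimes\psi^{-1})(v)$, breaks down: the resulting error has the shape $\|(\eta\otimes\eta)(v)-v\|$ for an automorphism $\eta$ with $\|\eta-\id\|$ small in operator norm, and such point-norm smallness does \emph{not} pass to the minimal tensor product uniformly in $v$ (the completely bounded norm of $\eta-\id$ need not be small), which would force $\eta$ to be chosen after $v$ even though $v$ itself depends on $\eta$. Routing the perturbation through \emph{inner} automorphisms as above circumvents this, since $\|\ad(s)-\ad(t)\|\le 2\|s-t\|$ holds with a constant independent of the element being conjugated, and it is precisely this uniformity that lets the limiting argument in (B) close.
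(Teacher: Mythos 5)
Your proof is correct, but it is organized differently from the paper's. The paper does not decompose the strong cocycle conjugacy into a conjugacy plus a coboundary perturbation; instead it invokes Lemma \ref{scc-approx-conjugate} to produce isomorphisms $\phi_n\colon A\to B$ with $\max_{g\in K}\|\beta_g\circ\phi_n-\phi_n\circ\alpha_g\|\to 0$, asserts the analogous operator-norm estimate for $\phi_n\otimes\phi_n$ on $A\otimes A$, picks a half-flip unitary $v$ for $\beta$, and transports it to $u=(\phi_n\otimes\phi_n)^{-1}(v)$. Your central objection --- that operator-norm closeness of two $*$-homomorphisms does not formally pass to the minimal tensor square --- is a legitimate point about the statement of Lemma \ref{scc-approx-conjugate}, but the conclusion that this route ``breaks down'' is too strong: the proof of that lemma produces $\phi_n$ satisfying the exact identity $\phi_n\circ\alpha_g=\ad(u_{g,n})\circ\beta_g\circ\phi_n$ with $\max_{g\in K}\|u_{g,n}-\eins\|\to 0$, and then $\|(u_{g,n}\otimes u_{g,n})-\eins\otimes\eins\|\le 2\|u_{g,n}-\eins\|$ gives the tensor-square estimate \emph{uniformly} in operator norm, so there is no circularity in choosing $v$ afterwards. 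In other words, the paper's argument rests on precisely the same uniformity $\|\ad(s)-\ad(t)\|\le 2\|s-t\|$ that you isolate; the paper leaves it implicit in the phrase ``it follows that,'' whereas you make it explicit by routing the perturbation through genuine coboundaries $w^{(n)}_g=x_n\beta_g(x_n^*)$ and inner conjugacies. Your version is slightly longer (it needs the symmetry of strong exterior equivalence and the two separate steps (A) and (B)), but it is more self-contained in that it uses only the definitions rather than the internal structure of Lemma \ref{scc-approx-conjugate}'s proof; both arguments are valid.
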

\begin{proof}
Let $\eps>0$, $F\fin A$ and a compact set $K\subset G$ be given. Applying \ref{scc-approx-conjugate}, we find a sequence of isomorphisms $\phi_n: A\to B$ with 
\[
\max_{g\in K}\|\beta_g\circ\phi_n-\phi_n\circ\alpha_g\| \stackrel{n\to\infty}{\longrightarrow} 0.
\]
It follows that
\[
\max_{g\in K}\|(\beta\otimes\beta)_g\circ(\phi_n\otimes\phi_n)-(\phi_n\otimes\phi_n)\circ(\alpha\otimes\alpha)_g\| \leq\eps
\]
for some large enough $n$.
Choose a unitary $v\in B\otimes B$ with 
\[
\max_{g\in K} \|v-(\beta\otimes\beta)_g(v)\|\leq\eps
\]
and
\[
v(\phi_n(a)\otimes\eins_B)v^* =_\eps \eins_B\otimes\phi_n(a)\quad\text{for all}~a\in F.
\]
For the unitary $u=(\phi_n\otimes\phi_n)^{-1}(v)$, it follows that
\[
\begin{array}{rl}
\multicolumn{2}{l}{ \dst\max_{g\in K}\|u-(\alpha\otimes\alpha)_g(u)\| } \\
\hspace{10mm}=& \dst\max_{g\in K} \|v-(\phi_n\otimes\phi_n)\circ(\alpha\otimes\alpha)_g\circ(\phi_n\otimes\phi_n)^{-1}(v)\| \\
\leq& \dst\eps+\max_{g\in K} \|v-(\beta\otimes\beta)_g(v)\| ~\leq~ 2\eps. 
\end{array}
\]
Moreover, we have
\[
u(a\otimes\eins_A)u^* =_\eps \eins_A\otimes a\quad\text{for all}~a\in F
\]
by choice of $v$. This finishes the proof.
\end{proof}

The following can be seen as a stronger variant of \ref{approx-fixed}:

\begin{lemma}
\label{uniform-equivariance}
Let $G$ be a second-countable, locally compact group and $A$ a separable \cstar-algebra. Suppose that $(\alpha,u): G\curvearrowright A$ is a cocycle action. 
Let $x\in F_{\infty,\alpha}(A)$ be a continuous element with respect to $\alpha$. Let $(x_n)_n\in\ell^\infty(\IN,A)$ be a representing sequence of $x$. Then for every $\eps>0$, $a\in A_{1,+}$ and compact set $K\subset G$, there is $n_0\in\IN$ such that
\[
\sup_{n\geq n_0}~ \| \big( \alpha_g(x_n)-x_n \big)a \| \leq \|\tilde{\alpha}_{\infty,g}(x)-x\|+\eps
\]
for all $g\in K$.
\end{lemma}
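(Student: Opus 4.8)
The plan is to mimic the proof of \ref{approx-fixed}, but to track the dependence on the group element so that the crude bound $\eps$ appearing there is replaced by the sharp, $g$-dependent quantity $\|\tilde\alpha_{\omega,g}(x)-x\|$. Throughout, write $y=[(x_n)_n]\in A_\omega\cap A'$, so that $\pi(y)=x$ under the canonical surjection $\pi\colon A_\omega\cap A'\to F_\omega(A)$. First I would record the pointwise estimate. Since each $\alpha_{\omega,g}$ preserves $A_\omega\cap A'$, the element $\alpha_{\omega,g}(y)$ again commutes with $A$, and under the isometric embedding $F_\omega(A)\hookrightarrow\CM(D_{\omega,A})$ of \ref{kirchberg-projection} one has
\[
(\tilde\alpha_{\omega,g}(x)-x)\cdot a = \big[\big((\alpha_g(x_n)-x_n)a\big)_n\big]\in D_{\omega,A}\subset A_\omega .
\]
Taking norms (and using $\|a\|\le 1$) therefore yields
\[
\lim_{n\to\omega}\|(\alpha_g(x_n)-x_n)a\| = \|(\tilde\alpha_{\omega,g}(x)-x)\cdot a\| \le \|\tilde\alpha_{\omega,g}(x)-x\|
\]
for each fixed $g\in G$. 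This already gives the claim for a single group element on some $J_g\in\omega$; the whole difficulty is to achieve it uniformly over the compact set $K$ with a single $J\in\omega$, which a naive intersection over the (uncountably many) $g\in K$ cannot deliver.

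The resolution is the Baire-category Lemma \ref{sequence-lift}, applied to the identity $\id_A\colon(A,\alpha,u)\to(A,\alpha,u)$ and the element $x^*$, represented by $(x_n^*)_n$. Its continuity hypothesis is met because $\alpha_{\omega,g}(y^*)\in A_\omega\cap A'$ commutes with $a$ and equals $\tilde\alpha_{\omega,g}(x^*)\cdot a$ under the Kirchberg embedding, and this is continuous in $g$ since $x$, hence $x^*$, is a continuous element of $F_{\omega,\alpha}(A)$. Thus, for a fixed $g_0\in K$ and any $\delta>0$, \ref{sequence-lift} produces $J_0\in\omega$ and an open neighbourhood $U\ni g_0$ with
\[
\sup_{k\in J_0}\sup_{g\in U}\|a(\alpha_g(x_k^*)-\alpha_{g_0}(x_k^*))\| \le \delta .
\]
Using $a=a^*$, the left-hand quantity equals $\|(\alpha_g(x_k)-\alpha_{g_0}(x_k))a\|$, so by the triangle inequality $\|(\alpha_g(x_k)-x_k)a\|\le\delta+\|(\alpha_{g_0}(x_k)-x_k)a\|$ for all $g\in U$ and $k\in J_0$.

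It then remains to feed in the pointwise estimate at the centre $g_0$ and to replace the value there by the value at $g$. By the first paragraph there is $J_1\in\omega$ with $\|(\alpha_{g_0}(x_k)-x_k)a\|\le\|\tilde\alpha_{\omega,g_0}(x)-x\|+\eps/4$ for $k\in J_1$; and since $x$ is a continuous element, the map $g\mapsto\|\tilde\alpha_{\omega,g}(x)-x\|$ is continuous, so after shrinking $U$ I may assume $\|\tilde\alpha_{\omega,g_0}(x)-x\|\le\|\tilde\alpha_{\omega,g}(x)-x\|+\eps/4$ for $g\in U$. Choosing $\delta=\eps/4$ and combining the three estimates gives, for all $g\in U$ and $k\in J_0\cap J_1$,
\[
\|(\alpha_g(x_k)-x_k)a\| \le \|\tilde\alpha_{\omega,g}(x)-x\|+\eps .
\]
Finally I would run the standard compactness argument: cover $K$ by finitely many such neighbourhoods $U_{g_1},\dots,U_{g_N}$ and set $J=\bigcap_{i=1}^N J_{g_i}$, which lies in $\omega$ as a finite intersection of elements of the filter; the displayed inequality then holds for every $g\in K$ and every $n\in J$, which is the assertion.

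The main obstacle, exactly as in \ref{approx-fixed}, is the passage from a pointwise bound to a single cofinal $J\in\omega$ that is uniform over $K$, and it is overcome by the Baire-category argument of \ref{sequence-lift} followed by compactness. The only genuinely new bookkeeping compared with \ref{approx-fixed} is keeping the sharp $g$-dependent right-hand side $\|\tilde\alpha_{\omega,g}(x)-x\|$ rather than a constant $\eps$, which is what forces the use of the continuity of $g\mapsto\|\tilde\alpha_{\omega,g}(x)-x\|$ when transferring the estimate from the centre $g_0$ to nearby $g$.
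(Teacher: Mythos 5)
Your proof is correct and follows essentially the same route as the paper's: the pointwise bound from the definition of the norm on $F_\omega(A)$, the Baire-category Lemma \ref{sequence-lift} to obtain local uniformity near each $g_0$, the continuity of $g\mapsto\|\tilde\alpha_{\omega,g}(x)-x\|$ to transfer the estimate from $g_0$ to nearby $g$, and a finite subcover of $K$ with a finite intersection of filter elements. Your adjoint trick (applying \ref{sequence-lift} to $x^*$ and using $a=a^*$ to move the multiplication to the correct side) is a careful touch that the paper glosses over; otherwise the two arguments coincide.
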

\begin{proof}
By definition, we have
\begin{equation} \label{eq3:1}
\|\tilde{\alpha}_{\infty,g}(x)-x\| = \sup_{a\in A_{1,+}}~\limsup_{n\to\infty}~\| \big( \alpha_g(x_n)-x_n \big)a \|
\end{equation}
for all $g\in G$. 

Let $\tilde{x}\in A_\infty\cap A'$ be an element with $x=\tilde{x}+\ann(A,A_\infty)$. Then since $x$ is $\tilde{\alpha}_{\infty}$-continuous, it follows that the assignment $g\mapsto \alpha_{\infty,g}(\tilde{x})\cdot a$ is continuous on $G$.

Let $\eps>0$, $K\subset G$ a compact set, and let $a\in A_{1,+}$ be fixed. Let $g_0\in K$.
 Applying \ref{sequence-lift}, choose an open neighbourhood $U\ni g_0$ and $n_0'\in\IN$ with
\begin{equation} \label{eq3:2}
\sup_{n\geq n_0'}~\sup_{g\in U} \|\big( \alpha_g(x_n)-\alpha_{g_0}(x_n) \big)a\| \leq \eps
\end{equation}
and
\begin{equation} \label{eq3:3}
\sup_{g\in U} \|\tilde{\alpha}_{\infty,g}(x)-\tilde{\alpha}_{\infty,g_0}(x)\|\leq\eps.
\end{equation}
Since $g_0\in K$ was arbitrary, we thus get an open cover. By compactness of $K$, we find $g_1,\dots,g_N\in K$ and an open covering $K\subset\bigcup_{i=1}^N U_i$ and $n_0\in\IN$, such that for every $i$, we have $g_i\in U_i$ and $U_i$ satisifes \eqref{eq3:2} with $n_0$ in place of $n_0'$, and \eqref{eq3:3}. 
Using \eqref{eq3:1}, we can further enlarge $n_0$ if necessary and assume that
\begin{equation} \label{eq3:4}
\sup_{n\geq n_0}~\| \big( \alpha_{g_i}(x_n)-x_n \big)a \| \leq \|\tilde{\alpha}_{\infty,g_i}(x)-x\|+\eps
\end{equation}
for all $i=1,\dots,N$.
It follows for all $g\in K$ that
\[
\begin{array}{cl}
\multicolumn{2}{l}{ \dst\sup_{n\geq n_0} \| \big( \alpha_g(x_n)-x_n \big)a \| } \\

\leq & \dst \min_{1\leq i\leq N}~\sup_{n\geq n_0}~\| \big( \alpha_g(x_n)-\alpha_{g_i}(x_n) \big)a\| + \| \big( \alpha_{g_i}(x_n)-x_n \big)a \| \\

\stackrel{\eqref{eq3:4}}{\leq}& \eps+\dst \min_{1\leq i\leq N}~\sup_{n\geq n_0}~\| \big( \alpha_g(x_n)-\alpha_{g_i}(x_n) \big)a\| + \| \tilde{\alpha}_{\infty, g_i}(x)-x \| \\
\leq & \eps+\dst \min_{1\leq i\leq N}~\sup_{n\geq n_0}~\| \big( \alpha_g(x_n)-\alpha_{g_i}(x_n) \big)a\| + \| \tilde{\alpha}_{\infty, g_i}(x)-\tilde{\alpha}_{\infty, g}(x) \| \\
& + \|\tilde{\alpha}_{\infty, g}(x)-x\| \\
\stackrel{\eqref{eq3:2}, \eqref{eq3:3}}{\leq} & 3\eps+\|\tilde{\alpha}_{\infty,g}(x)-x\|.
\end{array}
\]
This finishes the proof.
\end{proof}

\begin{lemma}
\label{scc-homo-ex}
Let $G$ be a second-countable, locally compact group. Let $A$ be a separable \cstar-algebra and $B$ a separable, unital \cstar-algebra. Suppose that $(\alpha,u): G\curvearrowright A$ is a cocycle action, and that $\beta,\gamma: G\curvearrowright B$ are two actions that are strongly cocycle conjugate.
Suppose that there exists a unital and equivariant $*$-homomorphism from $(B,\beta)$ to $(F_{\infty,\alpha}(A),\tilde{\alpha}_\infty)$. Then there exists a unital and equivariant $*$-homomorphism from $(B,\gamma)$ to $(F_{\infty,\alpha}(A),\tilde{\alpha}_\infty)$.
\end{lemma}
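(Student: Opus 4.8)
The plan is to reduce the existence of a unital equivariant $*$-homomorphism $(B,\gamma)\to(F_{\omega,\alpha}(A),\tilde{\alpha}_\omega)$ to the hypothesis for $(B,\beta)$. Since $\beta$ and $\gamma$ are strongly cocycle conjugate, I would transport the given $\beta$-embedding through a sequence of automorphisms of $B$ that convert $\gamma$ into $\beta$ up to an error vanishing on compact subsets of $G$, obtaining a sequence of unital $*$-homomorphisms that are \emph{approximately} $\gamma$-equivariant; the bulk of the work is then to straighten these into a single \emph{honestly} equivariant map by a reindexation argument.

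First I would set up the transport. Let $\Psi\colon(B,\beta)\to(F_{\omega,\alpha}(A),\tilde{\alpha}_\omega)$ be the given unital equivariant $*$-homomorphism, fix an increasing sequence of compact sets $K_n\subset G$ with $\bigcup_n K_n=G$, and fix $\eps_n\searrow 0$. Applying Lemma \ref{scc-approx-conjugate} to the pair $\gamma,\beta$ produces automorphisms $\phi_n\in\Aut(B)$ with $\max_{g\in K_n}\|\beta_g\circ\phi_n-\phi_n\circ\gamma_g\|\le\eps_n$. Set $\Theta_n=\Psi\circ\phi_n\colon B\to F_{\omega,\alpha}(A)$. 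Each $\Theta_n$ is a unital $*$-homomorphism whose image is $\Psi(B)\subset F_{\omega,\alpha}(A)$, so every $\Theta_n(b)$ is a continuous element. Using $\tilde{\alpha}_{\omega,g}\circ\Psi=\Psi\circ\beta_g$ and that $\Psi$ is contractive, one obtains
\[
\max_{g\in K_n}\ \big\|\tilde{\alpha}_{\omega,g}(\Theta_n(b))-\Theta_n(\gamma_g(b))\big\|\le\eps_n\|b\|\qquad(b\in B),
\]
so the $\Theta_n$ are unital $*$-homomorphisms out of $(B,\gamma)$ that are approximately $\gamma$-equivariant, with error tending to $0$ uniformly on compact subsets of $G$.

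The second step is to assemble $(\Theta_n)_n$ into a single unital $*$-homomorphism $\Theta\colon(B,\gamma)\to(F_{\omega,\alpha}(A),\tilde{\alpha}_\omega)$ that is \emph{exactly} equivariant. I would fix a dense sequence $(b_m)_m$ in $B$, lift each $\Theta_n(b_m)$ to a representing sequence in $\ell^\infty(\IN,A)$, and then pass to a suitable diagonal, reindexing over $n$ to define a representing sequence for $\Theta(b_m)$. Because every $\Theta_n$ is a genuine unital $*$-homomorphism, the algebraic relations (linearity, multiplicativity, adjoints, unitality) are preserved in the $\omega$-diagonal automatically, so $\Theta$ is a unital $*$-homomorphism. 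Exact $\gamma$-equivariance should then follow from the fact that the approximate error $\eps_n$ tends to $0$: along the diagonal the defect $\|(\tilde{\alpha}_{\omega,g}(\Theta(b))-\Theta(\gamma_g(b)))\,a\|$ is dominated by the corresponding defect for $\Theta_n$, which is $\le\eps_n\|b\|$ on $K_n$.

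The main obstacle is exactly this diagonalization, and it is where non-discreteness of $G$ bites. Since $\tilde{\alpha}_\omega$ fails to be point-norm continuous on $F_\omega(A)$, a naive reindexation controls equivariance only at finitely many group elements at a time, which is insufficient to obtain equality of the two continuous maps $g\mapsto\tilde{\alpha}_{\omega,g}(\Theta(b))$ and $g\mapsto\Theta(\gamma_g(b))$ for \emph{all} $g$. This is precisely the difficulty flagged in the introduction, and the remedy is Lemma \ref{uniform-equivariance} (resting on the Baire-category Lemma \ref{sequence-lift}): applied to the continuous elements $\Theta_n(b_m)$, it converts the quotient-level equivariance estimate above into a bound on the relevant defects of the representing sequence, $\|(\alpha_g(\cdot)-\cdot)\,a\|$ for $a\in A_{1,+}$, that is \emph{uniform} over the compact set $K_n$. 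This uniformity is what lets the reindexed representing sequence be diagonalized so that the limiting map lands in the continuous part $F_{\omega,\alpha}(A)$ and is genuinely $\gamma$-to-$\tilde{\alpha}_\omega$ equivariant, completing the proof.
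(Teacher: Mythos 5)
Your proposal follows the paper's proof essentially verbatim: both transport the given embedding through the approximately intertwining automorphisms $\phi_n$ supplied by Lemma \ref{scc-approx-conjugate} and then diagonalize the resulting approximately $\gamma$-equivariant maps $\Psi\circ\phi_n$ (the paper's $\kappa_{k_n}\circ\phi_n$), invoking Lemma \ref{uniform-equivariance} for the uniform-on-compacts control that makes the diagonal land in the continuous part $F_{\omega,\alpha}(A)$. The only compressed step is the final extraction of exact equivariance, which the paper carries out by first securing it on a countable dense subgroup of $G$ together with the estimate $\|\tilde{\alpha}_{\omega,h}(\psi(b))-\psi(b)\|\leq\|\gamma_h(b)-b\|$ on a compact neighbourhood of the identity, and then extending by continuity --- precisely the mechanism your sketch gestures at.
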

\begin{proof}
Let $\kappa: (B,\beta)\to (F_{\infty,\alpha}(A),\tilde{\alpha}_\infty)$ be a unital and equivariant $*$-homomor\-phism. Consider a lift $(\kappa_n)_n: B\to\ell^\infty(\IN,A)$, where each $\kappa_n$ is a $*$-linear map. Then this sequence satisfies 
\begin{itemize}
\item $\dst\limsup_{n\to\infty}\|\kappa_n(b)a\| \leq\|b\|$
\item $\dst\limsup_{n\to\infty}\|\big( \kappa_n(b_1)\kappa_n(b_2)-\kappa_n(b_1b_2) \big)a\| = 0$
\item $\dst\limsup_{n\to\infty}\|\kappa_n(\eins_B)a-a\|= 0$
\item $\dst\limsup_{n\to\infty}\|[\kappa_n(b),a]\|= 0$
\item $\dst\limsup_{n\to\infty}\| \big( (\alpha_g\circ\kappa_n)(b)-(\kappa_n\circ\beta_g)(b) \big)a \| = 0$ 
\item $\dst\limsup_{n\to\infty}~\max_{h\in K}\Big( \|\beta_h(b)-b\|- \| \big( (\alpha_h\circ\kappa_n)(b)-(\kappa_n)(b) \big)a \| \Big) \geq 0$
\end{itemize}
for all $b,b_1,b_2\in B$, $0\leq a\leq\eins$ in $A$, $g\in G$ and all compact sets $K\subset G$. Note that the last condition follows from \ref{uniform-equivariance}.

Apply \ref{scc-approx-conjugate} and find a sequence of automorphisms $\phi_n: B\to B$ with 
\[
\max_{g\in K}\|\beta_g\circ\phi_n-\phi_n\circ\gamma_g\|\to 0
\] 
for all compact $K\subset G$. Let $1_G\in M_n\fin G$ be an increasing sequence of finite sets whose union $G_d$ is a countable, dense subgroup of $G$. Let $0\in F_n\fin B$ be a sequence of finite subsets whose union is a countable, dense $\IQ[i]$-$*$-subalgebra $B'$ of $B$, which is invariant under $\gamma_g$ for every $g\in G_d$. Moreover, let $E_n\fin A_{1,+}$ be an increasing sequence of finite sets whose union is dense. Let $K_0\subset G$ be a compact neighbourhood of $1_G$.

For every $n\in\IN$, we find a number $k_n\in\IN$ satisfying
\begin{itemize}
\item $\|\kappa_{k_n}(b)a\| \leq\|b\|+1/n$
\item $\|\big( \kappa_{k_n}(b_1)\kappa_{k_n}(b_2)-\kappa_{k_n}(b_1b_2) \big)a\| \leq 1/n$
\item $\|\kappa_{k_n}(\eins_B)a-a\|\leq 1/n$
\item $\|[\kappa_{k_n}(b),a]\|\leq 1/n$
\item $\| \big( (\alpha_g\circ\kappa_{k_n})(b)-(\kappa_{k_n}\circ\beta_g)(b) \big)a \| \leq 1/n$
\item $\|\big( (\alpha_h\circ\kappa_n)(b)-\kappa_n(b) \big)a\|\leq \|\beta_h(b)-b\|+1/n$
\end{itemize}
for all $b_1,b_2\in \phi_n(F_n)$, $a\in E_n$, $h\in K_0$, $g\in M_n$ and 
\[
 b\in \set{ \beta_g(\phi_n(x))-\phi_n(\gamma_g(y)) ~|~ x,y\in F_n~\text{and}~g\in M_n }.
\]
Now the last condition, together with the choice of $\phi_n$, implies that for sufficiently large $n$, we have
\[
\begin{array}{cl}
\multicolumn{2}{l}{ \hspace{-15mm}\|\big( (\alpha_h\circ\kappa_{k_n}\circ\phi_n)(b) - (\kappa_{k_n}\circ\phi_n)(b) \big) a\| } \\
\leq&  1/n+\|(\beta_h\circ\phi_n)(b)-(\phi_n)(b)\| \\
\leq& 2/n+\|\gamma_h(b)-b\|
\end{array}
\]
for all $b\in F_n$, $a\in E_n$ and $h\in K_0$. Moreover, the first and fifth condition combined with the choice of the $\phi_n$ also implies
\[
\begin{array}{cl}
\multicolumn{2}{l}{ \hspace{-10mm}\dst \limsup_{n\to\infty} \|\big( (\alpha_g\circ\kappa_{k_n}\circ\phi_n)(b) - (\kappa_{k_n}\circ\phi_n\circ\gamma_g)(b) \big) a\| } \\
\leq&\dst \limsup_{n\to\infty}~\Big( \|\big( (\alpha_g\circ\kappa_{k_n}\circ\phi_n)(b) - (\kappa_{k_n}\circ\beta_g\circ\phi_n)(b) \big) a\| \\
& +  \|\big( (\kappa_{k_n}\circ\beta_g\circ\phi_n)(b) - (\kappa_{k_n}\circ\phi_n\circ\gamma_g)(b) \big) a\|  \Big) \\
\leq& \dst 0+\limsup_{n\to\infty} \|\kappa_{k_n}\big( (\beta_g\circ\phi_n)(b) - (\phi_n\circ\gamma_g)(b) \big) a\| = 0
\end{array} 
\]
for every $g\in M_n$ and $b\in F_n$.

Combining all these observations, we see that the chosen sequence $k_n\in\IN$ satisfies
\begin{itemize}
\item $\dst\limsup_{n\to\infty} \|(\kappa_{k_n}\circ\phi_n)(b)a\| \leq\|b\|$
\item $\|\big( (\kappa_{k_n}\circ\phi_n)(b_1)(\kappa_{k_n}\circ\phi_n)(b_2)-(\kappa_{k_n}\circ\phi_n)(b_1b_2) \big)a\| \to 0$
\item $\|(\kappa_{k_n}\circ\phi_n)(\eins_B)a-a\| \to 0$
\item $\|[(\kappa_{k_n}\circ\phi_n)(b),a]\| \to 0$
\item $\| \big( (\alpha_g\circ\kappa_{k_n}\circ\phi_n)(b)-(\kappa_{k_n}\circ\phi_n\circ\gamma_g)(b) \big)a \| \to 0$ 
\item $\dst\limsup_{n\to\infty}~\max_{h\in K_0} \Big( \|\gamma_h(b)-b\| - \|\big( (\alpha_h\circ\kappa_{k_n}\circ\phi_n)(b)-(\kappa_{k_n}\circ\phi_n)(b) \big)a\| \Big)\geq 0$
\end{itemize}
for all $b,b_1,b_2\in B'$, $a\in\bigcup_n E_n$ and $g\in G_d$.

Thus we get a well-defined, unital and continuous $*$-homomorphism 
\[
\psi: B'\to F_\infty(A) \quad\text{via}\quad \psi(b)=[ \big( (\kappa_{k_n}\circ\phi_n)(b) \big)_n ].
\] 
The last condition above implies that 
\[
\|\tilde{\alpha}_{\infty,h}(\psi(b))-\psi(b)\| \leq \|\gamma_h(b)-b\|
\]
for all $b\in B'$ and $h\in K_0$. As $K_0$ is a neighbourhood of $1_G$, it follows that the image of $\psi$ is contained in the continuous part $F_{\infty,\alpha}(A)$.
 
Moreover, the fifth condition implies that we have $\alpha_{\infty,g}\circ\psi=\psi\circ\gamma_g$ for all $g\in G_d$. By continuity, $\psi$ uniquely extends to a unital $*$-homomorphism $\psi: B\to F_{\infty,\alpha}(A)$, and the equation $\alpha_{\infty,g}\circ\psi=\psi\circ\gamma_g$ must now hold for every $g\in G$ because of continuity and the fact that $G_d\subset G$ is dense. This finishes the proof.
\end{proof}

With these intermediate technical results, we can now get a more intrinsic characterization of semi-strongly self-absorbing \cstar-dynamical systems:

\begin{theorem}
\label{sssa-ssa}
Let $G$ be a second-countable, locally compact group and $\CD$ a separable, unital \cstar-algebra.
For an action $\beta: G\curvearrowright\CD$, the following are equivalent:
\begin{enumerate}[label=\textup{(\roman*)},leftmargin=*]
\item $\beta$ is semi-strongly self-absorbing. \label{sssa-ssa:1}
\item $\beta$ has approximately $G$-inner half-flip and there exists a unital, equivariant $*$-homomorphism from $(\CD,\beta)$ to $\big( \CD_{\infty,\beta}\cap \CD', \beta_\infty \big)$. \label{sssa-ssa:2}
\item $\beta$ has approximately $G$-inner half-flip and is strongly cocycle conjugate to $( \bigotimes_\IN \CD, \bigotimes_\IN \beta)$. \label{sssa-ssa:3}
\end{enumerate}
\end{theorem}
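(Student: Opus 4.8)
The plan is to establish the cycle \ref{sssa-ssa:1}$\Rightarrow$\ref{sssa-ssa:2}$\Rightarrow$\ref{sssa-ssa:3}$\Rightarrow$\ref{sssa-ssa:1}, using freely that strong cocycle conjugacy is an equivalence relation, that conjugacy implies strong cocycle conjugacy, and that being strongly self-absorbing is a conjugacy invariant. For \ref{sssa-ssa:1} I may therefore assume without loss of generality that the strongly self-absorbing action witnessing semi-strong self-absorption lives on $\CD$ itself: if $\beta\scc\delta$ for a strongly self-absorbing $\delta: G\curvearrowright\CE$, I transport $\delta$ along the implementing isomorphism $\CD\cong\CE$ to a strongly self-absorbing action $\gamma: G\curvearrowright\CD$ with $\beta\scc\gamma$.

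For \ref{sssa-ssa:1}$\Rightarrow$\ref{sssa-ssa:2}, the half-flip claim is immediate: $\gamma$ has approximately $G$-inner half-flip by \ref{ssa-facts}\ref{ssa-facts:2}, and since $\beta\scc\gamma$, Lemma \ref{scc-aih} transfers this to $\beta$. To produce the equivariant embedding I first observe that $\beta$ absorbs $\gamma$: tensoring $(\CD,\beta)\scc(\CD,\gamma)$ with $(\CD,\gamma)$ gives $(\CD\otimes\CD,\beta\otimes\gamma)\scc(\CD\otimes\CD,\gamma\otimes\gamma)$, whose right-hand side is conjugate to $(\CD,\gamma)$ because $\gamma$ is strongly self-absorbing; chaining with $(\CD,\gamma)\scc(\CD,\beta)$ yields $(\CD,\beta)\scc(\CD\otimes\CD,\beta\otimes\gamma)$. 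Feeding this into the main theorem \ref{equi-McDuff} (with $A=\CD$, $\alpha=\beta$ and strongly self-absorbing system $\gamma$) supplies a unital equivariant $*$-homomorphism $(\CD,\gamma)\to(F_{\omega,\beta}(\CD),\tilde\beta_\omega)$. Finally, Lemma \ref{scc-homo-ex}, applied to the strongly cocycle conjugate actions $\gamma$ and $\beta$ on $\CD$, converts this into the desired unital equivariant $*$-homomorphism $(\CD,\beta)\to(F_{\omega,\beta}(\CD),\tilde\beta_\omega)$; since $\CD$ is unital this target is exactly $(\CD_{\omega,\beta}\cap\CD',\beta_\omega)$.

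For \ref{sssa-ssa:2}$\Rightarrow$\ref{sssa-ssa:3}, the half-flip is part of the hypothesis, so only $\beta\scc\bigotimes_\IN\beta$ remains. Since $\beta$ has approximately $G$-inner half-flip, $(\bigotimes_\IN\CD,\bigotimes_\IN\beta)$ is strongly self-absorbing by \ref{aih-ssa}. By \ref{equi-McDuff} it then suffices to produce a unital equivariant $*$-homomorphism $(\bigotimes_\IN\CD,\bigotimes_\IN\beta)\to(F_{\omega,\beta}(\CD),\tilde\beta_\omega)$: the conclusion of \ref{equi-McDuff} reads $\beta\scc\beta\otimes\bigotimes_\IN\beta$, and the right-hand factor is conjugate to $\bigotimes_\IN\beta$. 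Hypothesis \ref{sssa-ssa:2} already gives one unital equivariant copy of $\CD$ inside $F_{\omega,\beta}(\CD)$, and I would amplify it to the infinite tensor power by reindexation, producing pairwise commuting, reindexed equivariant copies of $\CD$ and assembling them into a homomorphism from $\bigotimes_\IN\CD$. (Alternatively, one may first apply the unital McDuff theorem \ref{equi mcduff} with $A=D=\CD$ and $\alpha=\gamma=\beta$ to get $\beta\scc\beta\otimes\beta$ and then iterate.) The main obstacle will be this passage to the infinite tensor power: in the non-discrete setting the reindexing must preserve point-norm continuity of the induced $G$-action uniformly on compact subsets of $G$, so the naive finite-set bookkeeping is insufficient and one must run the Baire-category machinery of Lemmas \ref{sequence-lift} and \ref{uniform-equivariance} to keep control over compact subsets of $G$ at a time.

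The implication \ref{sssa-ssa:3}$\Rightarrow$\ref{sssa-ssa:1} is then immediate: the half-flip hypothesis makes $(\bigotimes_\IN\CD,\bigotimes_\IN\beta)$ strongly self-absorbing by \ref{aih-ssa}, and $\beta$ is strongly cocycle conjugate to it, so $\beta$ is semi-strongly self-absorbing by definition.
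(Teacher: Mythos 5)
Your argument follows the paper's proof essentially step for step: (i)$\Rightarrow$(ii) via \ref{ssa-facts}, \ref{scc-aih}, \ref{equi-McDuff} and \ref{scc-homo-ex}; (ii)$\Rightarrow$(iii) via \ref{aih-ssa}, a reindexation to the infinite tensor power controlled by \ref{uniform-equivariance}, and \ref{equi-McDuff}; and (iii)$\Rightarrow$(i) via \ref{aih-ssa}. The only point worth flagging is your parenthetical alternative for (ii)$\Rightarrow$(iii): iterating $\beta\scc\beta\otimes\beta$ gives $\beta\scc\beta^{\otimes 2^n}$ for each finite $n$ but does not by itself yield absorption of the infinite tensor power, so the reindexation route you describe first is the one to keep.
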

\begin{proof}
\ref{sssa-ssa:1}$\implies$\ref{sssa-ssa:2}: Let $\gamma: G\curvearrowright\CD$ be a strongly self-absorbing action that is strongly cocycle conjugate to $\beta$. By \ref{ssa-facts}\ref{ssa-facts:2} and \ref{scc-aih}, it follows that $\beta$ has approximately $G$-inner half-flip.
As we have $\beta\otimes\gamma\scc\gamma\otimes\gamma\scc\gamma\scc\beta$, it follows from \ref{ssa-facts}\ref{ssa-facts:1} that there exists a unital and equivariant $*$-homomorphism from $(\CD,\gamma)$ to $\big( \CD_{\infty,\beta}\cap \CD', \beta_\infty \big)$. By \ref{scc-homo-ex}, there also exists a unital and equivariant $*$-homomorphism from $(\CD,\beta)$ to $\big( \CD_{\infty,\beta}\cap \CD', \beta_\infty \big)$.

\ref{sssa-ssa:2}$\implies$\ref{sssa-ssa:3}: Applying \ref{aih-ssa}, we see that the infinite tensor power \linebreak $( \bigotimes_\IN \CD, \bigotimes_\IN \beta)$ is strongly self-absorbing. Since there exists a unital and equivariant $*$-homomorphism from $(\CD,\beta)$ to $\big( \CD_{\infty,\beta}\cap \CD', \beta_\infty \big)$, one can construct a unital and equivariant $*$-homomorphism $(\bigotimes_\IN \CD, \bigotimes_\IN \beta)$ to $\big( \CD_{\infty,\beta}\cap \CD', \beta_\infty \big)$, by applying a standard reindexation argument (see also \cite[1.13]{Kirchberg04}) and using \ref{uniform-equivariance} in the process. By \ref{equi-McDuff}, it follows that
\[
(\CD,\beta) \scc \big( \CD\otimes\bigotimes_\IN\CD, \beta\otimes\bigotimes_\IN\beta \big) = ( \bigotimes_\IN \CD, \bigotimes_\IN \beta).
\]

\ref{sssa-ssa:3}$\implies$\ref{sssa-ssa:1}: This follows directly from \ref{aih-ssa}\ref{aih-ssa:2}.
\end{proof}

We may in fact extend our equivariant McDuff-type main result \ref{equi-McDuff} to cover semi-strongly self-absorbing actions:

\begin{theorem}
\label{equi-McDuff-2}
Let $G$ be a second-countable, locally compact group. Let $A$ be a separable \cstar-algebra and $(\alpha,u): G\curvearrowright A$ a cocycle action. Let $\CD$ be a separable, unital \cstar-algebra and $\beta: G\curvearrowright\CD$ an action such that $(\CD,\beta)$ is semi-strongly self-absorbing.
The following are equivalent:
\begin{enumerate}[label=\textup{(\roman*)},leftmargin=*] 
\item $(A,\alpha,u)$ is strongly cocycle conjugate to $(A\otimes\CD,\alpha\otimes\beta,u\otimes\eins_\CD)$. 
\item $(A,\alpha,u)$ is cocycle conjugate to $(A\otimes\CD,\alpha\otimes\beta,u\otimes\eins_\CD)$. 
\item There exists a unital, equivariant $*$-homomorphism from $(\CD,\beta)$ to $\big( F_{\infty,\alpha}(A), \tilde{\alpha}_\infty \big)$.
\end{enumerate}
\end{theorem}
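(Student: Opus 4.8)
The plan is to reduce everything to the already-established main theorem \ref{equi-McDuff} by passing through a strongly self-absorbing model. Since $(\CD,\beta)$ is semi-strongly self-absorbing, I fix a strongly self-absorbing action $\gamma: G\curvearrowright\CD$ with $\beta\scc\gamma$. The whole argument then rests on one auxiliary observation, which I expect to be the only genuinely new technical point.

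\emph{Claim.} Strong cocycle conjugacy between $\beta$ and $\gamma$ is preserved after tensoring with $(\alpha,u)$; that is,
\[
(A\otimes\CD,\alpha\otimes\beta,u\otimes\eins_\CD) \scc (A\otimes\CD,\alpha\otimes\gamma,u\otimes\eins_\CD).
\]
To prove the claim I would unravel the definition of strong cocycle conjugacy for the genuine actions $\beta,\gamma$: there is $\theta\in\Aut(\CD)$, a $(\theta\beta\theta^{-1})$-1-cocycle $g\mapsto v_g\in\CU(\CD)$ with $\gamma_g=\ad(v_g)\circ(\theta\beta\theta^{-1})_g$, and unitaries $x_n\in\CU(\CD)$ with $x_n(\theta\beta\theta^{-1})_g(x_n^*)\to v_g$ uniformly on compact sets (on the unital algebra $\CD$ the strict topology agrees with the norm topology). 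One then checks that $\id_A\otimes\theta$, together with the cocycle $g\mapsto\eins_{\CM(A)}\otimes v_g$ and the unitaries $\eins_{\CM(A)}\otimes x_n$, witnesses strong exterior equivalence of $(\alpha\otimes\theta\beta\theta^{-1},u\otimes\eins_\CD)$ and $(\alpha\otimes\gamma,u\otimes\eins_\CD)$. The only computations are $\ad(\eins\otimes v_g)\circ(\alpha\otimes\theta\beta\theta^{-1})_g=(\alpha\otimes\gamma)_g$; the exterior-equivalence cocycle identity, which reduces to $v_s(\theta\beta\theta^{-1})_s(v_t)v_{st}^*=\eins$ after commuting $u(s,t)\otimes\eins_\CD$ past the second tensor leg; and
\[
\big(\eins_{\CM(A)}\otimes x_n\big)(\alpha\otimes\theta\beta\theta^{-1})_g\big(\eins_{\CM(A)}\otimes x_n\big)^* = \eins_{\CM(A)}\otimes\big(x_n(\theta\beta\theta^{-1})_g(x_n^*)\big) \strict \eins_{\CM(A)}\otimes v_g
\]
uniformly on compact sets. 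All of these are routine, but some care is needed to track the correct order of the tensor factors and the placement of $u$.

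With the claim in hand, the three equivalences follow formally. The implication (i)$\Rightarrow$(ii) is trivial, since strong cocycle conjugacy implies cocycle conjugacy. For (ii)$\Rightarrow$(iii): combining (ii) with the claim gives $(A,\alpha,u)\cc(A\otimes\CD,\alpha\otimes\gamma,u\otimes\eins_\CD)$, so \ref{equi-McDuff} applied to the strongly self-absorbing action $\gamma$ yields a unital, equivariant $*$-homomorphism $(\CD,\gamma)\to(F_{\omega,\alpha}(A),\tilde\alpha_\omega)$; since $\beta\scc\gamma$, an application of \ref{scc-homo-ex} (with the roles of $\beta$ and $\gamma$ interchanged, using symmetry of $\scc$) converts this into a unital, equivariant $*$-homomorphism $(\CD,\beta)\to(F_{\omega,\alpha}(A),\tilde\alpha_\omega)$. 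For (iii)$\Rightarrow$(i): \ref{scc-homo-ex} turns the given $(\CD,\beta)$-homomorphism into a $(\CD,\gamma)$-homomorphism, \ref{equi-McDuff} then gives $(A,\alpha,u)\scc(A\otimes\CD,\alpha\otimes\gamma,u\otimes\eins_\CD)$, and finally the claim together with transitivity of $\scc$ upgrades the target back to $(A\otimes\CD,\alpha\otimes\beta,u\otimes\eins_\CD)$, which is exactly (i). The main obstacle is thus entirely concentrated in verifying the claim; the remainder is bookkeeping with \ref{equi-McDuff} and \ref{scc-homo-ex}.
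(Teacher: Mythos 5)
Your proposal is correct and follows essentially the same route as the paper, whose proof of this theorem is simply the observation that it follows from \ref{equi-McDuff} together with \ref{scc-homo-ex}. Your explicit verification of the auxiliary claim that tensoring with $(\alpha,u)$ preserves strong cocycle conjugacy (together with the symmetry and transitivity of $\scc$) merely spells out bookkeeping the paper leaves implicit, and the computations you indicate all check out.
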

\begin{proof}
This follows directly from \ref{equi-McDuff} and \ref{scc-homo-ex}.
\end{proof}

We conclude this section by considering the special case where the acting group is compact.

\begin{prop}
\label{scc-compact}
Let $G$ be a second-countable, compact group. Let $A$ be a separable, unital \cstar-algebra and $\alpha,\beta: G\curvearrowright A$ two actions. Then $\alpha$ and $\beta$ are strongly cocycle conjugate if and only if they are conjugate.
\end{prop}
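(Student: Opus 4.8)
The implication that conjugacy implies strong cocycle conjugacy is trivial (take the trivializing unitaries to be $\eins_A$), so the work is entirely in the converse, and I would first reduce it to a purely cohomological statement. Assume $\alpha\scc\beta$. Since conjugating $\alpha$ by an isomorphism does not affect the conclusion, I may assume $A=B$ and that $\beta$ is a cocycle perturbation of $\alpha$ via an approximate coboundary: $\beta_g=\ad(w_g)\circ\alpha_g$ for an $\alpha$-$1$-cocycle $w$ admitting unitaries $x_n\in\CU(A)$ (here $\CM(A)=A$, as $A$ is unital) with $x_n\alpha_g(x_n^*)\to w_g$ uniformly on compact sets, hence uniformly on all of $G$ by compactness. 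The plan is to show that, when $G$ is compact, such an approximate coboundary is automatically a genuine coboundary, i.e.\ to produce $v\in\CU(A)$ with $w_g=v\alpha_g(v^*)$ for all $g$. Once this is done, the identity $\ad(v)\circ\alpha_g\circ\ad(v^*)=\ad(v\alpha_g(v^*))\circ\alpha_g=\beta_g$ exhibits $\ad(v)$ as an equivariant isomorphism $(A,\alpha)\to(A,\beta)$, giving conjugacy.

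Writing $w^{(n)}_g=x_n\alpha_g(x_n^*)$ and $\eps_n=\sup_{g\in G}\|w^{(n)}_g-w_g\|\to 0$, the first step is to observe that the unitaries $x_n^*x_m$ become approximately $\alpha$-invariant. Indeed
\[
\|\alpha_g(x_n^*x_m)-x_n^*x_m\|=\|w^{(n)}_g(w^{(m)}_g)^*-\eins\|\le\eps_n+\eps_m
\]
uniformly in $g$. Passing to a subsequence (which does not alter the limit $w_g$) I may assume $\sum_n\eps_n<\infty$. The key flexibility I will exploit is that replacing each $x_n$ by $x_nc_n$ with $c_n\in\CU(A^\alpha)$ a fixed-point unitary leaves $w^{(n)}$ unchanged, since $\alpha_g(c_n^*)=c_n^*$.

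The decisive use of compactness, in the spirit of the averaging remark following Lemma~\ref{one side}, comes next: I would average over $G$. Letting $\mu$ be normalized Haar measure and $a_n=\int_G\alpha_g(x_n^*x_{n+1})\,d\mu(g)$, the element $a_n$ is genuinely $\alpha$-fixed with $\|a_n-x_n^*x_{n+1}\|\le\eps_n+\eps_{n+1}$, so for large $n$ it is invertible and its unitary part $p_n=a_n|a_n|^{-1}$ lies in $\CU(A^\alpha)$ (as $|a_n|$ is $\alpha$-fixed) and satisfies $\|p_n-x_n^*x_{n+1}\|\le C(\eps_n+\eps_{n+1})$ for a universal constant $C$. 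Setting $q_n=p_1\cdots p_{n-1}\in\CU(A^\alpha)$ and $\tilde x_n=x_nq_n^*$, the flexibility above gives $\tilde x_n\alpha_g(\tilde x_n^*)=w^{(n)}_g$, while a telescoping computation yields $\|\tilde x_n^*\tilde x_{n+1}-\eins\|=\|x_n^*x_{n+1}-p_n\|$, which is summable. Hence $(\tilde x_n)$ is norm-Cauchy with some unitary limit $v$, and passing to the limit in $w^{(n)}_g=\tilde x_n\alpha_g(\tilde x_n^*)$ (uniformly in $g$, since $\|\tilde x_n\alpha_g(\tilde x_n^*)-v\alpha_g(v^*)\|\le 2\|\tilde x_n-v\|$) identifies $w_g=v\alpha_g(v^*)$, as required.

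The only serious obstacle is this passage from an approximate to a genuine coboundary, and the single place where compactness is indispensable is the Haar-averaging step that converts the approximate $\alpha$-invariance of $x_n^*x_{n+1}$ into honest fixed-point unitaries $p_n$; without it there is no way to summably correct the $x_n$, which is consistent with the fact that the irrational-rotation example of the previous section shows the statement is false for noncompact $G$. The residual computations --- the polar-decomposition estimate $\|p_n-x_n^*x_{n+1}\|\le C(\eps_n+\eps_{n+1})$ and the final conjugacy identity --- are routine, and I would only record them briefly.
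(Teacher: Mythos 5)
Your argument is correct, and it reaches the conclusion by a genuinely different (more self-contained) route than the paper. The paper's proof first replaces the conjugating isomorphism $\phi$ by $\ad(x_n^*)\circ\phi$ for a single large $n$ (as in the proof of Lemma \ref{scc-approx-conjugate}), so that the connecting $1$-cocycle becomes uniformly within distance $<1$ of $\eins$, and then invokes Izumi's stability lemma \cite[2.4]{Izumi04} as a black box to conclude that such a cocycle over a compact group is a coboundary. You instead prove directly that an approximate coboundary over a compact group is an exact coboundary: you correct each $x_n$ by a fixed-point unitary $q_n\in\CU(A^\alpha)$, manufactured from the Haar average of $x_n^*x_{n+1}$ via polar decomposition, so that the corrected sequence $\tilde x_n=x_nq_n^*$ is norm-Cauchy while still implementing $w^{(n)}_g$; the norm limit $v$ then satisfies $w_g=v\alpha_g(v^*)$. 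All the estimates you sketch check out (in particular $\|\alpha_g(x_n^*x_m)-x_n^*x_m\|=\|w^{(n)}_g(w^{(m)}_g)^*-\eins\|$ and the telescoping identity $\|\tilde x_n^*\tilde x_{n+1}-\eins\|=\|x_n^*x_{n+1}-p_n\|$), and since $A$ is unital the strict topology in the definition of approximate coboundary coincides with the norm topology, so the uniform-on-$G$ convergence you use is exactly what the hypothesis gives. The only cosmetic point is that the products $q_n=p_1\cdots p_{n-1}$ should start at an index beyond which $\eps_n+\eps_{n+1}$ is small enough for $a_n$ to be invertible. What the paper's route buys is brevity; what yours buys is independence from the external citation --- in effect you re-prove the relevant case of Izumi's lemma inline, by the same underlying mechanism (Haar averaging plus polar decomposition), applied to the increments $x_n^*x_{n+1}$ rather than to a single cocycle close to $\eins$.
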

\begin{proof}
Let $\phi: A\to B$ be an isomorphism and $x_n\in\CU(A)$ a sequence of unitaries such that $w_g=\lim_{n\to\infty} x_n\beta_g(x_n^*)$ converges uniformly and $\ad(w_g)\circ\beta_g\circ\phi=\phi\circ\alpha_g$ for all $g\in G$. For some large $n$, we may replace $\phi$ by $\ad(x_n^*)\circ\phi$ and $w_g$ by $x_n^*w_g\beta_g(x_n)$ (see the proof of \ref{scc-approx-conjugate}), and assume
\[
\ad(w_g)\circ\beta_g\circ\phi=\phi\circ\alpha_g\quad\text{for all}~g\in G
\]
and $\max_{g\in G} \|\eins-w_g\|<1$. Applying \cite[2.4]{Izumi04}, we deduce that $[g\mapsto w_g]$ is a coboundary, i.e.~there exists $v\in\CU(A)$ with $w_g=v\beta_g(v^*)$ for all $g\in G$. This implies
\[
\beta_g\circ\ad(v^*)\circ\phi=\ad(v^*)\circ\phi\circ\alpha_g\quad\text{for all}~g\in G.
\]
So $\ad(v^*)\circ\phi: (A,\alpha)\to (A,\beta)$ is an equivariant isomorphism.
\end{proof}

\begin{cor}
Let $G$ be a second-countable, compact group. Let $\CD$ be a separable, unital \cstar-algebra and $\gamma: G\curvearrowright\CD$ an action. Then $\gamma$ is strongly self-absorbing if and only if $\gamma$ is semi-strongly self-absorbing.
\end{cor}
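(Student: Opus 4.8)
The plan is to prove the two implications separately: the forward direction is immediate, while the reverse rests entirely on Proposition~\ref{scc-compact}, whose role is to upgrade strong cocycle conjugacy to conjugacy in the compact case.

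First I would dispose of the forward implication. If $\gamma$ is strongly self-absorbing, then it is strongly cocycle conjugate to itself (via $\id_\CD$ together with the constant sequence of unitaries), and thus strongly cocycle conjugate to a strongly self-absorbing action, so it is semi-strongly self-absorbing by definition.

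For the converse, the first step is to reduce everything to a pair of actions on the same algebra $\CD$. By the definition of semi-strongly self-absorbing, there is a separable, unital \cstar-algebra $\CE$ carrying a strongly self-absorbing action $\delta\colon G\curvearrowright\CE$ together with an isomorphism $\phi\colon\CD\to\CE$ such that $\phi\circ\gamma\circ\phi^{-1}$ is strongly exterior equivalent to $\delta$. I would pull $\delta$ back along $\phi$ and set $\delta'=\phi^{-1}\circ\delta\circ\phi\colon G\curvearrowright\CD$. Conjugating the strongly exterior equivalence by the strictly continuous extension of $\phi^{-1}$ to the multiplier algebras turns the implementing approximate coboundary $v$ into $\phi^{-1}(v)$, which is again an approximate coboundary because a multiplier isomorphism preserves strict continuity and strict limits (uniformly on compact sets); hence $\gamma$ and $\delta'$ are strongly exterior equivalent, i.e.\ $(\CD,\gamma)\scc(\CD,\delta')$ via the identity. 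Moreover $(\CD,\delta')\cong(\CE,\delta)$ by construction, and since strong self-absorption is a conjugacy invariant (the equivariant isomorphism $\phi^{-1}\otimes\phi^{-1}$ intertwines the two first-factor embeddings and transports the approximate $G$-unitary equivalence to an isomorphism accordingly), the action $\delta'$ is again strongly self-absorbing.

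Now both $\gamma$ and $\delta'$ are actions of the compact group $G$ on the same separable, unital \cstar-algebra $\CD$, and they are strongly cocycle conjugate. Applying Proposition~\ref{scc-compact}, this strong cocycle conjugacy is in fact a conjugacy, so there is an equivariant isomorphism $(\CD,\gamma)\cong(\CD,\delta')$. As $\delta'$ is strongly self-absorbing and this property is preserved under conjugacy, $\gamma$ is strongly self-absorbing, which finishes the proof. The only genuinely delicate point in this argument is the bookkeeping of the first step, namely replacing the abstract strongly self-absorbing action in the definition by a strongly self-absorbing action on $\CD$ itself so that Proposition~\ref{scc-compact} becomes applicable; all the substantive work (and in turn the appeal to Izumi's cohomology-vanishing result \cite[2.4]{Izumi04}) is already contained in~\ref{scc-compact}.
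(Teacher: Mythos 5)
Your argument is correct and is precisely the intended one: the paper states this corollary without proof as an immediate consequence of Proposition~\ref{scc-compact}, namely that semi-strong self-absorption means strong cocycle conjugacy to a strongly self-absorbing action, which in the compact unital case upgrades to conjugacy, under which strong self-absorption is invariant. Your extra bookkeeping (transporting the strongly self-absorbing action to an action $\delta'$ on $\CD$ itself so that the hypotheses of~\ref{scc-compact} are literally met) is a reasonable and harmless elaboration of what the paper leaves implicit.
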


\begin{cor}
If one restricts to compact $G$ and genuine actions, then all instances of 'strongly cocycle conjugate' in \ref{equi mcduff} can be replaced by 'conjugate'.
\end{cor}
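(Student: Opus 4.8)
The plan is to combine the conclusion of Theorem~\ref{equi mcduff} with the rigidity statement Proposition~\ref{scc-compact}. Restricting to genuine actions means the $2$-cocycle $u$ is trivial, so Theorem~\ref{equi mcduff} outputs that $(A,\alpha)$ is strongly cocycle conjugate to $(D\otimes A,\gamma\otimes\alpha)$; the task is to promote this to genuine conjugacy using compactness of $G$. First I would unwind the definition of strong cocycle conjugacy for these two systems: it provides an isomorphism $\phi\colon A\to D\otimes A$ such that the transported action $\phi\circ\alpha\circ\phi^{-1}$ on $D\otimes A$ is strongly exterior equivalent to $\gamma\otimes\alpha$. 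Since $A$ and $D$ are separable and unital, so is $D\otimes A$, and on a unital algebra the strict topology on the multiplier algebra coincides with the norm topology; hence there is no discrepancy between the strict-topology version of approximate coboundary used in our definitions and the norm version relevant to Proposition~\ref{scc-compact}.

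The crux is the observation that strong exterior equivalence of two actions on the \emph{same} algebra is precisely the special case of strong cocycle conjugacy in which the implementing isomorphism is the identity. Thus $\phi\circ\alpha\circ\phi^{-1}$ and $\gamma\otimes\alpha$ are two strongly cocycle conjugate actions on the unital \cstar-algebra $D\otimes A$. Because $G$ is compact, Proposition~\ref{scc-compact} applies and upgrades this to genuine conjugacy: there is an equivariant isomorphism $\psi\colon (D\otimes A,\phi\circ\alpha\circ\phi^{-1})\to (D\otimes A,\gamma\otimes\alpha)$.

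Finally I would compose. The map $\psi\circ\phi\colon A\to D\otimes A$ satisfies $(\psi\circ\phi)\circ\alpha_g = \psi\circ(\phi\circ\alpha_g\circ\phi^{-1})\circ\phi = (\gamma\otimes\alpha)_g\circ(\psi\circ\phi)$ for every $g\in G$, so it is an equivariant isomorphism witnessing $(A,\alpha)\cong (D\otimes A,\gamma\otimes\alpha)$, which is exactly the conclusion of Theorem~\ref{equi mcduff} with ``strongly cocycle conjugate'' replaced by ``conjugate''.

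The genuine content of the corollary resides entirely in Proposition~\ref{scc-compact}, whose proof invokes Izumi's result \cite[2.4]{Izumi04} that a cocycle close to $\eins$ for a compact group action is a coboundary. Consequently the only real obstacle here is bookkeeping: arranging that the two actions being compared live on the same unital algebra $D\otimes A$ so that Proposition~\ref{scc-compact} is applicable, and confirming that strong exterior equivalence is the degenerate ($\phi=\id$) instance of strong cocycle conjugacy. Everything else is a formal composition of isomorphisms.
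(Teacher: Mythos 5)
Your argument is correct and is exactly the intended one: the paper states this corollary without proof precisely because it is the immediate combination of Theorem \ref{equi mcduff} with Proposition \ref{scc-compact}, which is what you carry out. Your careful unwinding — viewing strong exterior equivalence on $D\otimes A$ as strong cocycle conjugacy implemented by the identity, invoking \ref{scc-compact} on the unital separable algebra $D\otimes A$, and composing the resulting equivariant isomorphism with $\phi$ — is precisely the bookkeeping the paper leaves implicit.
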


\begin{question}
Does \ref{scc-compact} hold also in the case that $A$ is non-unital? Does \ref{scc-compact} hold, if $(\alpha,u), (\beta,w): G\curvearrowright A$ are cocycle actions with non-trivial cocycles?
\end{question}


\section{Examples}
\noindent
So far, we have only discussed a very straightforward class of examples of strongly self-absorbing actions, namely the trivial actions on strongly self-absorbing \cstar-algebras. In this section, we shall discuss other examples of (semi-)strongly self-absorbing actions. Let us start by considering compact group actions on strongly self-absorbing \cstar-algebras that have the Rokhlin property, see \cite[3.2]{HirshbergWinter07} for the definition on unital \cstar-algebras.

\begin{example}
\label{Rokhlin-prop}
Let $G$ be a second-countable, compact group and $\CD$ a strongly self-absorbing \cstar-algebra. Let $\gamma: G\curvearrowright\CD$ be an action with the Rokhlin property. Then $(\CD,\gamma)$ is strongly self-absorbing.
\end{example}
\begin{proof}
Note that all unital $*$-homomorphism of the form $\CD\to\CD\otimes A$ are mutually approximately unitarily equivalent by \cite[1.12]{TomsWinter07}. In particular, this is the case for $A=\CC(G)$. 
By \cite[Theorem 5.10]{BarlakSzaboVoigt17} (see \cite[3.5]{Izumi04} for the finite group case), the systems $(\CD,\gamma)$ and $(\CD\otimes\CD,\gamma\otimes\gamma)$ are conjugate. 
Let $\phi: (\CD,\gamma)\to (\CD\otimes\CD,\gamma\otimes\gamma)$ be an equivariant isomorphism. Then $\phi$ is approximately unitarily equivalent to $\id_\CD\otimes\eins_\CD$ as an ordinary $*$-homomorphism. Once again because of the Rokhlin property \cite[Corollary 5.9]{BarlakSzaboVoigt17} (see \cite[Proposition 3.1]{GardellaSantiago15_2} for the finite group case), it follows that these two equivariant $*$-homomorphisms are indeed approximately $G$-unitarily equivalent.
\end{proof}

For the next example, recall Kishimoto's notion of the Rokhlin property for flows \cite{Kishimoto96_R}:

\begin{example}[cf.~\cite{Kishimoto02}]
A quasi-free Rokhlin flow on $\CO_2$ is semi-strongly self-absorbing. In particular, if $s_1,s_2\in\CO_2$ are the canonical generators and $\lambda>0$ is an irrational number, then the flow given by $\gamma_t(s_1)=e^{it}s_1$ and $\gamma_t(s_2)=e^{-i\lambda t}s_2$ is semi-strongly self-absorbing.
\end{example}
\begin{proof}
Let $\gamma: \IR\curvearrowright\CO_2$ be a quasi-free flow with the Rokhlin property. By \cite[3.5]{Kishimoto02}, $\gamma$ has approximately $G$-inner flip. By \cite[3.6]{Kishimoto02}, there exists a unital and equivariant $*$-homomorphism from $(\CO_2,\gamma)$ to $\big( (\CO_2)_\infty\cap\CO_2', \gamma_\infty \big)$. By \ref{sssa-ssa}, it follows that $\gamma$ is semi-strongly self-absorbing.
\end{proof}

Let us now consider examples of discrete group actions that come from noncommutative Bernoulli shifts:

\begin{example}
\label{ex:tensor-flip}
Let $G$ be a finite group. Let $n\in\IN$ be a number and $\sigma: G\curvearrowright\set{1,\dots,n}$ an action. Let $\fp$ be a supernatural number of infinite type. Then the induced tensorial shift action $\beta^\sigma: G\curvearrowright M_\fp^{\otimes n}$ given by
\[
\beta^\sigma_g(a_1\otimes\dots\otimes a_n) = a_{\sigma_g(1)}\otimes\dots\otimes a_{\sigma_g(n)}
\]
is strongly self-absorbing.
\end{example}
\begin{proof}
We first note that since $\fp$ is of infinite type, the UHF algebra $M_\fp$ is strongly self-absorbing. So choosing an isomorphism $M_\fp\cong M_\fp^{\otimes\infty}$, a straightforward rearrangement of the tensorial factors yields that $(M_\fp^{\otimes n},\beta^\sigma) \cong \bigotimes_\IN (M_\fp^{\otimes n},\beta^\sigma)$. By \ref{aih-ssa}, it suffices to show that $\beta^\sigma$ has approximately $G$-inner flip. Now by definition, $(M_\fp^{\otimes n},\beta^\sigma)$ is an equivariant inductive limit of \cstar-dynamical systems of the form $(M_p^{\otimes n}, \beta^\sigma)$ for some natural numbers $p\in\IN$. So without loss of generality, it suffices to show that the tensorial shift action induced by $\sigma$ on $M_p^{\otimes n}$ has $G$-inner flip.

If $\set{e_{i,j}}_{1\leq i,j\leq p}\subset M_p$ are the standard matrix units, then
\[
\set{ e_{\mfi,\mfj}= e_{i_1,j_1}\otimes\dots\otimes e_{i_n,j_n} ~|~ \mfi=(i_1,\dots,i_n), \mfj=(j_1,\dots,j_n)\in\set{1,\dots,p}^n } 
\]
defines another set of matrix units for $M_p^{\otimes n}$. Consider the unitary in $M_p^{\otimes n}\otimes M_p^{\otimes n}$ given by
\[
v=\sum_{\mfi,\mfj\in\set{1,\dots,p}^n} e_{\mfi,\mfj}\otimes e_{\mfj,\mfi}.
\]
Then $v$ implements the flip automorphism. It is clear that $\beta^\sigma$ restricts to a $G$-action on the matrix units $e_{\mfi,\mfj}$, just by acting on the index set. In particular, we see that $v$ is fixed by the diagonal action $\beta^\sigma\otimes\beta^\sigma$ on $M_p^{\otimes n}\otimes M_p^{\otimes n}$. So indeed, $(M_p^{\otimes n},\beta^\sigma)$ has $G$-inner flip. This finishes the proof.
\end{proof}

\begin{question}
Is the action from the above example still strongly self-absorbing, if one replaces $M_\fp$ by some other strongly self-absorbing \cstar-algebra $\CD$? In particular, what about $\CD=\CZ$ or $\CD=\CO_\infty$?
\end{question}

The case $\CD=\CO_2$ indeed has an affirmative answer because a faithful, tensorial shift action of a finite group on $\CO_2$ has the Rokhlin property by \cite[5.4]{Izumi04}, thus falling within the scope of \ref{Rokhlin-prop}.

More generally, let us ask:

\begin{question}
Let $G$ be a countable, discrete group and $\sigma: G\curvearrowright\IN$ an action. Let $\CD$ be a strongly self-absorbing \cstar-algebra. Consider the induced tensorial shift action $\beta^\sigma: G\curvearrowright \bigotimes_\IN\CD\cong\CD$ given by
\[
\beta^\sigma_g(a_1\otimes a_2\otimes a_3\otimes\dots) = a_{\sigma_g(1)}\otimes a_{\sigma_g(2)}\otimes a_{\sigma_g(3)}\otimes\dots ,
\]
where all but finitely many of the $a_n\in\CD$ are equal to $\eins$. When is $(\CD,\beta^\sigma)$ strongly self-absorbing? 
\end{question}

At least a straightforward rearrangement of the tensorial factors can be done again in this case to deduce that one always has $(\bigotimes_\IN\CD,\beta^\sigma)=\bigotimes_\IN (\bigotimes_\IN\CD,\beta^\sigma)$. So in view of \ref{aih-ssa}, the question is in which cases $\beta^\sigma$ has approximately $G$-inner half-flip.

The following trivial argument allows one to construct strongly self-absorbing actions of residually compact groups, with the help of examples of compact group actions.

\begin{prop}
Let $G$ be a second-countable, locally compact group. Let $H\subset G$ be a closed, normal, cocompact subgroup. Let $\pi: G\to G/H$ be the quotient map. Let $\CD$ be a separable, unital \cstar-algebra and $\gamma: G/H\curvearrowright\CD$ a strongly self-absorbing $G/H$-action. Then $\gamma\circ\pi: G\curvearrowright\CD$ is strongly self-absorbing.
\end{prop}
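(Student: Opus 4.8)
The plan is to unwind Definition \ref{ssa system} on both sides and check that each ingredient of ``strongly self-absorbing for $\gamma$ over $G/H$'' descends, via the quotient homomorphism $\pi$, to the corresponding ingredient for $\gamma\circ\pi$ over $G$. First I would note that $\gamma\circ\pi$ is a genuine point-norm continuous $G$-action, being the composite of the continuous homomorphism $\pi$ with the point-norm continuous action $\gamma$; and that since $\CD$ is unital, $\CM(\CD\otimes\CD)=\CD\otimes\CD$, so that strict convergence of unitaries is just norm convergence here.

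Next, since $\gamma$ is strongly self-absorbing as a $G/H$-action, Definition \ref{ssa system} supplies a $G/H$-equivariant isomorphism $\phi:(\CD,\gamma)\to(\CD\otimes\CD,\gamma\otimes\gamma)$ together with unitaries $v_n\in\CU(\CD\otimes\CD)$ witnessing $\id_\CD\otimes\eins_\CD\ue{G/H}\phi$; that is, $\phi=\lim_n\ad(v_n)\circ(\id_\CD\otimes\eins_\CD)$ in the point-norm topology, and $(\gamma\otimes\gamma)_{\bar g}(v_n)-v_n\to 0$ uniformly for $\bar g$ in compact subsets of $G/H$. The key formal observation is the identity of $G$-actions $(\gamma\circ\pi)\otimes(\gamma\circ\pi)=(\gamma\otimes\gamma)\circ\pi$. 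Precomposing the intertwining relation $\phi\circ\gamma_{\bar g}=(\gamma\otimes\gamma)_{\bar g}\circ\phi$ with $\pi$ then shows at once that $\phi$ is also an equivariant isomorphism $(\CD,\gamma\circ\pi)\to(\CD\otimes\CD,(\gamma\circ\pi)\otimes(\gamma\circ\pi))$.

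It remains to check that the same unitaries $v_n$ witness $\id_\CD\otimes\eins_\CD\ue{G}\phi$. The point-norm convergence $\phi=\lim_n\ad(v_n)\circ(\id_\CD\otimes\eins_\CD)$ is a statement about maps, not about the group, so it is unchanged. For the equivariance condition, fix a compact set $K\subset G$; then $\pi(K)\subset G/H$ is compact by continuity of $\pi$, and since $((\gamma\circ\pi)\otimes(\gamma\circ\pi))_g(v_n)-v_n=(\gamma\otimes\gamma)_{\pi(g)}(v_n)-v_n$ for every $g\in G$ we obtain
\[
\sup_{g\in K}\big\|((\gamma\circ\pi)\otimes(\gamma\circ\pi))_g(v_n)-v_n\big\|\leq\sup_{\bar g\in\pi(K)}\big\|(\gamma\otimes\gamma)_{\bar g}(v_n)-v_n\big\|\stackrel{n\to\infty}{\longrightarrow}0.
\]
Hence $\id_\CD\otimes\eins_\CD\ue{G}\phi$ with $\phi$ an isomorphism, which is exactly the statement that $\gamma\circ\pi$ is strongly self-absorbing.

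I do not expect a genuine obstacle here; as the preceding remark indicates, the argument is formal. The single substantive point --- and the only place a property of $G$ enters --- is that a continuous homomorphism maps compact sets to compact sets, which is what lets the ``uniform on compact subsets'' condition transfer from $G/H$ up to $G$. I note in passing that cocompactness of $H$ is not actually used in this transfer; it is only relevant in that it makes $G/H$ compact, so that the examples of strongly self-absorbing actions fed into the proposition come from compact group actions (cf.\ \ref{Rokhlin-prop}).
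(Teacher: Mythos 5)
Your proof is correct, and it is exactly the formal unwinding the paper has in mind: the paper in fact omits the proof entirely, introducing the proposition as a ``trivial argument,'' and your verification (the identity $(\gamma\circ\pi)\otimes(\gamma\circ\pi)=(\gamma\otimes\gamma)\circ\pi$ plus the fact that $\pi$ maps compact sets to compact sets, so the same witnessing unitaries $v_n$ work) is the intended content. Your closing observation that cocompactness of $H$ is never used, and only serves to make the source actions come from compact quotients in the applications, is also accurate.
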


\begin{rem}
\label{ex:ssa-cocompact-products}
In particular, let $H_n\subset G$ be a sequence of closed, normal, cocompact subgroups with $\bigcap_{n\in\IN} H_n = \set{1_G}$. Let $\pi_n: G\to G/H_n$ be the quotient maps, and let $\gamma^{(n)}: G/H_n\curvearrowright\CD_n$ be a sequence of strongly self-absorbing actions on some separable, unital \cstar-algebras. Then
\[
\gamma = \bigotimes_{n\in\IN} (\gamma^{(n)}\circ\pi_n): G\curvearrowright \bigotimes_{n\in\IN}\CD_n
\] 
is a strongly self-absorbing action. If each $\gamma^{(n)}$ was faithful, then $\gamma$ is faithful.
\end{rem}

\begin{cor}
\label{ex:res-fin-on-UHF}
Let $\fp$ be a supernatural number of infinite type. Then any countable, discrete, residually finite group admits a faithful, strongly self-absorbing action on $M_{\fp}$.
\end{cor}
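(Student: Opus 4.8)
The plan is to realize $G$ as an inverse limit of its finite quotients and to assemble the desired action as an infinite tensor product of faithful strongly self-absorbing actions on these quotients, using Remark \ref{ex:ssa-cocompact-products} for the assembly and Example \ref{ex:tensor-flip} for the building blocks.

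First I would fix the residually finite structure. Since $G$ is countable and residually finite, I can enumerate $G\setminus\set{1_G}$ and, for each non-trivial element, choose a finite-index normal subgroup avoiding it; intersecting the first $n$ of these produces a decreasing sequence $H_1\supset H_2\supset\dots$ of finite-index normal subgroups of $G$ with $\bigcap_{n\in\IN} H_n=\set{1_G}$. Because $G$ is discrete, each $H_n$ is closed, and since $G/H_n$ is finite it is compact, so each $H_n$ is a closed, normal, cocompact subgroup. Let $\pi_n: G\to G/H_n$ denote the quotient maps. This puts me exactly in the situation of Remark \ref{ex:ssa-cocompact-products}, provided I can supply a faithful strongly self-absorbing action of each finite quotient $G/H_n$ on a copy of $M_{\fp}$.

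To build these, I would use the left regular action. For the finite group $Q=G/H_n$, left translation gives a free, hence faithful, action $\sigma_n: Q\curvearrowright Q$ of $Q$ on its own underlying set, which has $|Q|$ elements. By Example \ref{ex:tensor-flip}, the induced tensorial shift $\beta^{\sigma_n}: Q\curvearrowright M_{\fp}^{\otimes|Q|}$ is strongly self-absorbing. Moreover it is faithful: for $g\neq 1$ the permutation $\sigma_{n,g}$ is fixed-point free, and a non-trivial permutation of the tensor factors of $M_{\fp}^{\otimes|Q|}$ is a non-trivial automorphism. Since $\fp$ is of infinite type, $M_{\fp}\cong M_{\fp}^{\otimes k}$ for every $k\in\IN$, so each building block acts on a \cstar-algebra $\CD_n=M_{\fp}^{\otimes|G/H_n|}\cong M_{\fp}$.

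Finally I would invoke Remark \ref{ex:ssa-cocompact-products} with $\gamma^{(n)}=\beta^{\sigma_n}$ to conclude that
\[
\gamma=\bigotimes_{n\in\IN}\big(\beta^{\sigma_n}\circ\pi_n\big): G\curvearrowright\bigotimes_{n\in\IN}\CD_n
\]
is strongly self-absorbing, and the faithfulness clause of Remark \ref{ex:ssa-cocompact-products} applies because each $\gamma^{(n)}$ is faithful and $\bigcap_n H_n=\set{1_G}$, so $\gamma$ is faithful. It then remains to identify the underlying algebra: $\bigotimes_{n\in\IN}\CD_n\cong\bigotimes_{\IN}M_{\fp}\cong M_{\fp}$, again using that $\fp$ is of infinite type. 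I expect no serious obstacle here, since essentially all the work is encapsulated in the quoted results; the only genuinely new verifications are the freeness and faithfulness of the regular-representation shifts and the bookkeeping identification $M_{\fp}^{\otimes\infty}\cong M_{\fp}$.
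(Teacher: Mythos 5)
Your proposal is correct and follows essentially the same route as the paper's proof: build the descending chain of finite-index normal subgroups with trivial intersection, apply Example \ref{ex:tensor-flip} to the tensorial shift induced by left translation of each finite quotient on itself, and assemble via Remark \ref{ex:ssa-cocompact-products}. The extra details you supply (freeness of the regular action giving faithfulness, and the identifications $M_{\fp}^{\otimes k}\cong M_{\fp}\cong\bigotimes_{\IN}M_{\fp}$ from $\fp$ being of infinite type) are exactly the points the paper leaves implicit.
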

\begin{proof}
Let $G$ be a countable, discrete, residually finite group. Let $H_n\subset G$ be a sequence of normal subgroups with finite index and $\bigcap_{n\in\IN} H_n=\set{1_G}$. By \ref{ex:tensor-flip}, the tensorial flip action of $G/H_n$ on $M_\fp$, which is induced by the left translation action $G/H_n\curvearrowright G/H_n$, is faithful and strongly self-absorbing. By \ref{ex:ssa-cocompact-products}, it follows that the infinite tensor product of the induced $G$-actions is a faithful, strongly self-absorbing action on $M_\fp$. 
\end{proof}

\begin{theorem}
Let $\fp$ be a supernatural number of infinite type. Every pointwise strongly outer action of $\IZ^d$ on $M_\fp$ is semi-strongly self-absorbing.
\end{theorem}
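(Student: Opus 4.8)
The plan is to realize $\beta$ as strongly cocycle conjugate to a concrete strongly self-absorbing model action, so that semi-strong self-absorption follows immediately from the definition. By \ref{ex:res-fin-on-UHF} the group $\IZ^d$, being countable, discrete and residually finite, carries a faithful strongly self-absorbing action $\gamma$ on $M_\fp$; its construction in \ref{ex:tensor-flip} and \ref{ex:ssa-cocompact-products} as an infinite tensor product of the tensorial flip actions of the finite quotients $\IZ^d/H_n$ shows, in addition, that $\gamma$ is pointwise strongly outer. Indeed, for $g\neq 0$ one has $g\notin H_n$ for all large $n$, and the associated finite-group tensorial permutation on the corresponding tensor factor is a noncommutative Bernoulli-type shift whose weak extension to the hyperfinite $\mathrm{II}_1$ factor $\CR=\pi_\tau(M_\fp)''$ is outer; hence $\bar\beta_g$ and $\bar\gamma_g$ are outer on $\CR$ for every $g\neq 0$.

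The decisive step is then to invoke the classification of pointwise strongly outer $\IZ^d$-actions on the UHF algebra $M_\fp$: since $M_\fp$ has a unique trace, the tracial invariant is trivial, and any two such actions should be mutually strongly cocycle conjugate. Granting this, $(M_\fp,\beta)\scc(M_\fp,\gamma)$, and as $\gamma$ is strongly self-absorbing, $\beta$ is by definition semi-strongly self-absorbing, which is the assertion. Equivalently, one can phrase the whole argument through the intrinsic criterion \ref{sssa-ssa}: it suffices to produce approximately $\IZ^d$-inner half-flip for $\beta$ together with a unital, equivariant $*$-homomorphism $(M_\fp,\beta)\to\big(M_{\fp,\omega,\beta}\cap M_\fp',\beta_\omega\big)$. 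The half-flip is inherited from $\gamma$ along strong cocycle conjugacy by \ref{scc-aih} (recall $\gamma$ has approximately $\IZ^d$-inner half-flip by \ref{ssa-facts}\ref{ssa-facts:2}), while the central embedding follows from the corresponding embedding for $(M_\fp,\gamma)$ together with \ref{scc-homo-ex}, once $\beta\scc\gamma$ is known.

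The main obstacle is precisely this classification/uniqueness input for pointwise strongly outer $\IZ^d$-actions on $M_\fp$, and in particular the upgrade of cocycle conjugacy to \emph{strong} cocycle conjugacy, which is indispensable because semi-strong self-absorption is defined through $\scc$ rather than $\cc$. This is the genuinely hard, non-formal ingredient: at the level of the hyperfinite $\mathrm{II}_1$ factor $\CR$ the pointwise outer action $\bar\beta$ is handled by Ocneanu's classification of amenable-group actions, so the equivariant central sequence algebra of $\bar\beta$ absorbs any amenable model, but descending this structure from the tracial (von Neumann) central sequence algebra to the C\textsuperscript{*}-algebraic one $M_{\fp,\omega,\beta}\cap M_\fp'$ requires the property (SI) and uniform-tracial central-sequence techniques of Matui--Sato. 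I would therefore cite the relevant classification of strongly outer $\IZ^d$-actions directly, after checking that the model $\gamma$ meets its hypotheses; the remaining verifications (pointwise strong outerness of $\gamma$, and inheritance of approximately $\IZ^d$-inner half-flip via \ref{scc-aih}) are routine within the framework already developed above.
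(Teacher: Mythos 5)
Your proposal follows essentially the same route as the paper: produce the faithful strongly self-absorbing model action $\gamma$ on $M_\fp$ via \ref{ex:res-fin-on-UHF}, check it is pointwise strongly outer, and then invoke the uniqueness theorem for pointwise strongly outer $\IZ^d$-actions on $M_\fp$ up to strong cocycle conjugacy (the paper cites Matui, \emph{loc.~cit.}~5.4, which indeed gives strong cocycle conjugacy, so the input you flagged as the hard external ingredient is exactly the one used). The argument is correct as stated.
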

\begin{proof}
By \ref{ex:res-fin-on-UHF}, there exists a faithful, strongly self-absorbing $\IZ^d$-action $\gamma$ on $M_\fp$. Because of \ref{ssa-facts}\ref{ssa-facts:1}, it is clearly pointwise strongly outer. By Matui's uniqueness result \cite[5.4]{Matui11}, every other pointwise strongly outer action of $\IZ^d$ on $M_\fp$ is strongly cocycle conjugate to $\gamma$, so the claim follows.
\end{proof}

\begin{example}
Let $G$ be a finite group. Then any quasi-free action of $G$ on $\CO_\infty$ is strongly self-absorbing.
\end{example}
\begin{proof}
Let $\alpha: G\curvearrowright\CO_\infty$ be a quasi-free action. By \cite[Section 2]{Evans80}, every automorphism $\alpha_g$ is either trivial or outer. So by dividing out the kernel of $\alpha$, if necessary, we may assume that $\alpha$ is pointwise outer. Now it follows from Goldstein-Izumi's absorption theorem \cite[5.1]{GoldsteinIzumi11} that $(\CO_\infty,\alpha) \cong \bigotimes_\IN (\CO_\infty,\alpha)$. On the other hand, \cite[4.1]{GoldsteinIzumi11} implies that $\alpha$ has approximately $G$-inner half-flip. With \ref{aih-ssa}, it follows that $\alpha$ is a strongly self-absorbing action.
\end{proof}

With the argument from \ref{ex:ssa-cocompact-products}, we again get more examples of strongly self-absorbing actions on $\CO_\infty$:

\begin{cor}
\label{ex:res-fin-on-Oinf}
Any countable, discrete, residually finite group admits a faithful, strongly self-absorbing action on $\CO_\infty$.
\end{cor}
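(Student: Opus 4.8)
The plan is to repeat the argument of \ref{ex:res-fin-on-UHF} almost verbatim, only feeding in the quasi-free $\CO_\infty$-example above (in place of the UHF tensorial-flip example \ref{ex:tensor-flip}) as the supply of finite-group building blocks, and then assembling these via \ref{ex:ssa-cocompact-products}.

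So first I would fix a countable, discrete, residually finite group $G$ and use residual finiteness to choose normal subgroups $H_n\subset G$ of finite index with $\bigcap_{n\in\IN} H_n=\set{1_G}$. As $G$ is discrete, each $H_n$ is automatically closed and cocompact, so the setup of \ref{ex:ssa-cocompact-products} applies with the quotient maps $\pi_n: G\to G/H_n$ onto the finite groups $F_n=G/H_n$.

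Next I would produce, for each $n$, a faithful and strongly self-absorbing $F_n$-action on $\CO_\infty$. For this, take the left regular representation $\lambda_n: F_n\to\CU(\ell^2(F_n))$, inflate it to a faithful unitary representation of $F_n$ on the (infinite-dimensional) generating Hilbert space of $\CO_\infty$, and let $\gamma^{(n)}: F_n\curvearrowright\CO_\infty$ be the associated quasi-free action. A quasi-free automorphism is the identity precisely when its implementing unitary is, so $\gamma^{(n)}$ is faithful because $\lambda_n$ is. By the quasi-free example above, any quasi-free action of a finite group on $\CO_\infty$ is strongly self-absorbing, hence so is $\gamma^{(n)}$.

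Finally I would invoke \ref{ex:ssa-cocompact-products}: since each $\gamma^{(n)}$ is a faithful, strongly self-absorbing $F_n$-action, the tensor product
\[
\gamma=\bigotimes_{n\in\IN}\big(\gamma^{(n)}\circ\pi_n\big): G\curvearrowright\bigotimes_{n\in\IN}\CO_\infty
\]
is a strongly self-absorbing $G$-action, and it is faithful because $\bigcap_n H_n=\set{1_G}$ forces every non-identity $g$ to act non-trivially in some tensor factor. As $\CO_\infty$ is strongly self-absorbing, $\bigotimes_{n}\CO_\infty\cong\CO_\infty$, which gives the claim. The only point requiring any care — and it is genuinely routine — is the faithfulness of the quasi-free blocks $\gamma^{(n)}$; everything else is a direct quotation of \ref{ex:ssa-cocompact-products} together with the quasi-free example, exactly mirroring \ref{ex:res-fin-on-UHF}.
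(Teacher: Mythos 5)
Your argument is correct and is exactly the route the paper intends: mirror the proof of \ref{ex:res-fin-on-UHF}, replacing the tensorial flip blocks on $M_\fp$ by faithful quasi-free actions of the finite quotients $G/H_n$ on $\CO_\infty$ (which are strongly self-absorbing by the preceding example), and then assemble them with \ref{ex:ssa-cocompact-products}, using $\bigotimes_\IN\CO_\infty\cong\CO_\infty$. Your extra remark on faithfulness of the quasi-free blocks (a quasi-free automorphism is trivial iff its implementing unitary is) is a correct and welcome point of care.
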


\begin{theorem}
Let $\CD$ be a strongly self-absorbing Kirchberg algebra satisfying the UCT. (That is, let either $\CD\cong\CO_\infty$, $\CD\cong\CO_2$ or $\CD\cong\CO_\infty\otimes M_\fp$ for a supernatural number $\fp$ of infinite type.)
Then every pointwise outer $\IZ^d$-action on $\CD$ is semi-strongly self-absorbing. 
\end{theorem}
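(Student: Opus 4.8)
The plan is to follow the strategy of the preceding theorem and reduce everything to a uniqueness statement for pointwise outer $\IZ^d$-actions on UCT Kirchberg algebras. First I would produce, for each admissible $\CD$, a faithful, pointwise outer, strongly self-absorbing model action $\gamma: \IZ^d \curvearrowright \CD$. For $\CD \cong \CO_\infty$ such a $\gamma$ is given directly by \ref{ex:res-fin-on-Oinf}. For $\CD \cong \CO_2$ one uses that the tensorial shift action of a finite group $Q$ on $\CO_2^{\otimes Q} \cong \CO_2$ has the Rokhlin property \cite[5.4]{Izumi04}, hence is strongly self-absorbing by \ref{Rokhlin-prop}; feeding these into the product construction \ref{ex:ssa-cocompact-products} for a nested sequence of finite-index subgroups of $\IZ^d$ with trivial intersection yields a faithful, strongly self-absorbing $\IZ^d$-action on $\bigotimes_\IN \CO_2 \cong \CO_2$. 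For $\CD \cong \CO_\infty \otimes M_\fp$ one tensors the two model actions: the resulting action inherits approximately $G$-inner half-flip and, by \ref{ssa-facts}\ref{ssa-facts:1} applied to each factor, is conjugate to its own infinite tensor power, so \ref{aih-ssa} shows it is strongly self-absorbing. In every case faithfulness together with \ref{ssa-facts}\ref{ssa-facts:1} makes $\gamma$ pointwise outer: writing $\gamma \cong \bigotimes_\IN \gamma$, a nontrivial $\gamma_g$ becomes $\bigotimes_\IN \gamma_g$, which cannot be inner for $g \neq 0$, since any implementing multiplier unitary is approximated inside a finite tensor power and hence fixes the distant legs, whereas $\bigotimes_\IN \gamma_g$ acts there by the nontrivial $\gamma_g$.

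The decisive step is the uniqueness assertion: any pointwise outer $\IZ^d$-action $\alpha$ on $\CD$ is strongly cocycle conjugate to $\gamma$. Here I would invoke the Izumi-Matui classification of such actions on UCT Kirchberg algebras \cite{IzumiMatui10}. The feature that makes the three algebras $\CO_2$, $\CO_\infty$ and $\CO_\infty \otimes M_\fp$ tractable is that each has $K_1 = 0$ and a $K_0$-group (namely $0$, $\IZ$ or $K_0(M_\fp)$, with its order and order-unit) that admits no nontrivial order- and unit-preserving automorphism. By the UCT this forces every automorphism of $\CD$ to be $KK$-trivial; consequently every $\IZ^d$-action has all of its generators acting $KK$-trivially, and both $\alpha$ and $\gamma$ carry the same, trivial, $KK$-theoretic invariant. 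The classification then gives $\alpha \scc \gamma$.

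Since $\gamma$ is strongly self-absorbing and $\alpha \scc \gamma$, the action $\alpha$ is semi-strongly self-absorbing by definition, completing the proof. The construction of $\gamma$ and the verification of its pointwise outerness are routine given the results of this and the previous section, so the whole weight of the argument rests on the middle step. The main obstacle is precisely to guarantee that the Izumi-Matui invariant vanishes identically for these algebras in \emph{all} ranks $d$; that is, one must ensure that $KK$-triviality of the generators together with $K_1(\CD) = 0$ leaves no secondary cohomological invariant capable of separating two pointwise outer $\IZ^d$-actions, so that a single strong cocycle conjugacy class remains.
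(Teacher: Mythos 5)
Your proposal is correct and follows essentially the same route as the paper: produce a faithful, pointwise outer, strongly self-absorbing model action $\gamma:\IZ^d\curvearrowright\CD$, invoke the uniqueness theorems for pointwise outer $\IZ^d$-actions on these particular Kirchberg algebras (the paper uses \cite[5.2]{Matui08} for $\CO_2$ and \cite[6.18, 6.20]{IzumiMatui10} for the other cases, which do cover all ranks $d$ and thus settle the concern you raise at the end) to get $\alpha\scc\gamma$, and conclude by the definition of semi-strongly self-absorbing. You are in fact somewhat more careful than the paper in constructing the model action for $\CD\cong\CO_2$ and $\CD\cong\CO_\infty\otimes M_\fp$, where the paper simply cites \ref{ex:res-fin-on-Oinf} (which literally only treats $\CO_\infty$); your tensor-product and Rokhlin-property constructions supply the missing details.
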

\begin{proof}
By \ref{ex:res-fin-on-Oinf}, there exists a faithful, strongly self-absorbing action $\gamma: \IZ^d\curvearrowright\CD$. This action must then be pointwise outer because of \ref{ssa-facts}\ref{ssa-facts:1}. By invoking either \cite[5.2]{Matui08} (for the case $\CD\cong\CO_2$) or \cite[6.18, 6.20]{IzumiMatui10} (for the other cases), we can deduce that every pointwise outer $\IZ^d$-action on $\CD$ is strongly cocycle conjugate to $\gamma$. This finishes the proof.
\end{proof}

\begin{question}
When is a quasi-free action of a countable, discrete group on $\CO_\infty$ semi-strongly self-absorbing?
\end{question}

\begin{rem}[cf.~{\cite[Section 6]{GoldsteinIzumi11}}]
\label{Oinfty-model-action}
Let $G$ be a countable, exact and discrete group. By Kirchberg-Phillips' $\CO_2$-embedding result, we can embed $C^*_r(G)$ into $\CO_2$ unitally. In particular, there exists a faithful unitary representation $v: G\to\CU(\CO_2)$. Consider the induced infinite tensor product action $\delta: G\curvearrowright\bigotimes_\IN\CO_2$ given by $\delta_g=\bigotimes_\IN\ad(v(g))$ for all $g\in G$.

 Choosing some non-zero (and necessarily non-unital) $*$-homomorphism $\iota: \CO_2\to\CO_\infty$, we also obtain a faithful unitary representation $u: G\to\CU(\CO_\infty)$ via $u(g)=\iota(v(g))+\eins-\iota(\eins)$ for all $g\in G$. We can then also consider the induced infinite tensor product action $\gamma: G\curvearrowright\bigotimes_\IN\CO_\infty$ given by $\gamma_g=\bigotimes_\IN\ad(u(g))$ for all $g\in G$.

Now assume that $G$ is finite. In this case, it was shown by Goldstein and Izumi in the proof of \cite[6.2]{GoldsteinIzumi11} that the action $\gamma$ is conjugate to any faithful, quasi-free $G$-action on $\CO_\infty$. In particular, it follows that $\gamma$ is strongly self-absorbing. On the other hand, it is not hard to construct a unital $*$-homomorphism from $\CO_2$ into the fixed point algebra $\big( (\CO_2)_\infty\cap\CO_2' \big)^{\delta_\infty}$, so it follows from \ref{equi mcduff} and \ref{scc-compact} that $(\CO_2,\delta)\cong (\CO_2\otimes\CO_2,\delta\otimes\id_{\CO_2})$. Since $\delta$ is pointwise outer, it follows from \cite[4.2]{Izumi04} that $\delta$ has the Rokhlin property. (We note that this can also be seen more directly by using a unital and equivariant inclusion $(\CC(G),G\text{-shift})\into (\CO_2,\ad(v))$.) By \ref{Rokhlin-prop}, it follows that $\delta$ is strongly self-absorbing. Moreover, the actions $\gamma$ and $\delta$ turn out to fit into an equivariant Kirchberg-Phillips absorption result:
\end{rem}

\begin{theorem}[cf.~{\cite[4.3]{Izumi04}} and {\cite[5.1]{GoldsteinIzumi11}}]
Let $G$ be a finite group and $A$ a separable, unital, nuclear, simple \cstar-algebra. Let $\alpha: G\curvearrowright A$ be an action. Then
\begin{enumerate}[label=\textup{(\arabic*)}, leftmargin=*]
\item $(\CO_2\otimes A, \delta\otimes\alpha)\cong (\CO_2,\delta)$.
\item $(A,\alpha)\cong (A\otimes\CO_\infty,\alpha\otimes\gamma)$, if $A$ is purely infinite and $\alpha$ is pointwise outer.
\end{enumerate}
\end{theorem}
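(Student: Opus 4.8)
The plan is to treat the two statements separately, in each case reducing to an equivariant absorption result already available for finite groups. Throughout I use that $G$ is finite, hence compact, so that by \ref{scc-compact} strong cocycle conjugacy between genuine actions on unital \cstar-algebras coincides with honest conjugacy; this lets me pass freely between the tensorial-absorption formulations and the conjugacy formulations demanded by the statement.

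For (1), I would first record that the model action $\delta$ has the Rokhlin property (as established in \ref{Oinfty-model-action}), and then observe that this property is inherited by $\delta\otimes\alpha$ on $\CO_2\otimes A$: if $(e_g)_{g\in G}$ are Rokhlin projections for $\delta$ in $(\CO_2)_\omega\cap\CO_2'$, then $(e_g\otimes\eins_A)_{g\in G}$ are Rokhlin projections for $\delta\otimes\alpha$ in $(\CO_2\otimes A)_\omega\cap(\CO_2\otimes A)'$, since a central sequence living in the first tensor factor commutes with all of $\CO_2\otimes A$ and is permuted correctly by $(\delta\otimes\alpha)_\omega$. Next, because $A$ is separable, unital, nuclear and simple, Kirchberg's $\CO_2$-absorption theorem gives $\CO_2\otimes A\cong\CO_2$, under which $\delta\otimes\alpha$ is transported to a $G$-action on $\CO_2$ with the Rokhlin property. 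Finally I would invoke Izumi's uniqueness theorem for finite-group actions with the Rokhlin property on $\CO_2$ \cite[4.2, 4.3]{Izumi04}: since the relevant $K$-theoretic obstructions vanish for $\CO_2$, any such action is conjugate to the model $\delta$, whence $(\CO_2\otimes A,\delta\otimes\alpha)\cong(\CO_2,\delta)$.

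For (2), I would run the argument through the main theorem \ref{equi-McDuff}. Since $(\CO_\infty,\gamma)$ is strongly self-absorbing (\ref{Oinfty-model-action}), the equivalence \ref{equi-McDuff3}$\Leftrightarrow$\ref{equi-McDuff2} of \ref{equi-McDuff}, combined with \ref{scc-compact}, reduces the desired conjugacy $(A,\alpha)\cong(A\otimes\CO_\infty,\alpha\otimes\gamma)$ to the existence of a unital, equivariant $*$-homomorphism $(\CO_\infty,\gamma)\to(F_{\omega,\alpha}(A),\tilde\alpha_\omega)$. Producing this equivariant embedding into the central sequence algebra is the main obstacle, and it is exactly the hard content supplied by Goldstein-Izumi's absorption analysis \cite[5.1]{GoldsteinIzumi11}: for a pointwise outer action $\alpha$ of a finite group on a unital Kirchberg algebra, pointwise outerness forces the induced $G$-action $\tilde\alpha_\omega$ on $F_\omega(A)$ to be sufficiently non-trivial (via a Rokhlin-type projection argument carried out in $KK$-theory) to accommodate a unital equivariant copy of the model, which \ref{Oinfty-model-action} has already identified with the Goldstein-Izumi model up to conjugacy. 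Feeding this embedding back into \ref{equi-McDuff} yields the conjugacy. I expect the genuinely difficult steps to be precisely this central-sequence embedding in (2) and the appeal to the classification/uniqueness of Rokhlin actions on $\CO_2$ in (1); by contrast, the verification that the Rokhlin property and pointwise outerness pass to the relevant tensor products, and the bookkeeping through \ref{scc-compact}, are routine.
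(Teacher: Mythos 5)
The paper does not actually prove this theorem: it is stated as a direct citation of Izumi's \cite[4.3]{Izumi04} and Goldstein--Izumi's \cite[5.1]{GoldsteinIzumi11}, with the only original content being the identification, carried out in \ref{Oinfty-model-action}, of the model actions $\delta$ and $\gamma$ with the Rokhlin-property action on $\CO_2$ and the faithful quasi-free action on $\CO_\infty$ to which those cited results apply. Your proposal therefore does more than the paper does, and its outline is sound. In part (1) your route --- Rokhlin projections of the form $e_g\otimes\eins_A$ pass to $\delta\otimes\alpha$, Kirchberg's theorem gives $\CO_2\otimes A\cong\CO_2$, and Izumi's uniqueness theorem for Rokhlin actions applies because every automorphism of $\CO_2$ is asymptotically unitarily equivalent to the identity ($KK(\CO_2,\CO_2)=0$) --- is essentially the proof of the cited \cite[4.3]{Izumi04} itself; just be explicit that Izumi's uniqueness statement requires the pointwise asymptotic unitary equivalence hypothesis, which is what the triviality of $KK(\CO_2,\CO_2)$ supplies. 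In part (2) your detour through \ref{equi-McDuff} and \ref{scc-compact} is valid (the strong cocycle conjugacy produced by \ref{equi-McDuff} upgrades to conjugacy for finite $G$ and unital algebras by \ref{scc-compact}), but note a slight circularity hazard in how you source the key input: \cite[5.1]{GoldsteinIzumi11} is itself the absorption statement $(A,\alpha)\cong(A\otimes\CO_\infty,\alpha\otimes\gamma)$, not the central sequence embedding, so one either cites it directly for the conclusion (as the paper does, after using \ref{Oinfty-model-action} to identify $\gamma$ with the quasi-free model) and skips \ref{equi-McDuff} entirely, or one must extract the unital equivariant embedding $(\CO_\infty,\gamma)\to(F_{\omega,\alpha}(A),\tilde\alpha_\omega)$ from the interior of Goldstein--Izumi's proof rather than from the statement of their theorem. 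What your approach buys is a proof that runs through the machinery of this paper and isolates exactly which external classification inputs are needed; what the paper's approach buys is brevity, since both parts are literally theorems in the cited references once the models are matched up.
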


The following lists the main results of forthcoming work, which relies on the results and techniques of this paper. It is generalization of the above theorem to the case of countable, amenable groups:

\begin{theorem}[see \cite{Szabo16_K}] 
Let $G$ be countable, discrete, amenable group. 
\begin{enumerate}[label=\textup{(\arabic*)}, leftmargin=*]
\item If $A$ is a Kirchberg algebra and $(\alpha,u): G\curvearrowright A$ is any cocycle action, then $(A,\alpha,u)\scc (A\otimes\CO_\infty, \alpha\otimes\id_{\CO_\infty}, u\otimes\eins_{\CO_\infty})$.
\item The actions $\gamma: G\curvearrowright\CO_\infty$ and $\delta: G\curvearrowright\CO_2$ from \ref{Oinfty-model-action} are strongly self-absorbing.
\item If $A$ is a Kirchberg algebra and $(\alpha,u): G\curvearrowright A$ is a cocycle action such that $\alpha$ is pointwise outer, then $(A,\alpha,u)\scc (A\otimes\CO_\infty, \alpha\otimes\gamma, u\otimes\eins_{\CO_\infty})$.
\item If $A$ is a separable, unital, nuclear, simple \cstar-algebra and $\alpha: G\curvearrowright A$ is an action, then $(\CO_2,\delta)\scc (A\otimes\CO_2,\alpha\otimes\delta)$. Moreover, if $\alpha$ is pointwise outer, then $(A\otimes\CO_2,\alpha\otimes\id_{\CO_2})\scc (\CO_2,\delta)$.
\end{enumerate}
\end{theorem}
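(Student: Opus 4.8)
The plan is to establish the four assertions in the order (2), (1), (3), (4), since every absorption statement can be reduced, via the equivariant McDuff-type machinery of Section 3, to the construction of a suitable (equivariant) unital $*$-homomorphism into a central sequence algebra. I would prove (2) first because it is the most self-contained and is needed as an input for (3) and (4).

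For (2), the key point is that each $\gamma_g=\bigotimes_\IN\ad(u(g))$ is an infinite tensor product of \emph{inner} automorphisms implemented by the unitaries $u(g)^{\otimes N}$, which are \emph{symmetric} under the tensor flip. Concretely, for each $N$ I would choose a unitary $w_N\in\CO_\infty^{\otimes N}\otimes\CO_\infty^{\otimes N}$ approximately implementing the flip of the first $N$ tensor factors of the two copies. Since the flip fixes $u(g)^{\otimes N}\otimes u(g)^{\otimes N}$, the unitary $w_N$ approximately commutes with the implementing unitary of $(\gamma\otimes\gamma)_g$ on these factors and acts trivially beyond the first $N$ factors; hence $w_N$ is approximately $(\gamma\otimes\gamma)$-invariant and witnesses $\id\otimes\eins \ue{G} \eins\otimes\id$. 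Thus $(\CO_\infty,\gamma)$ has approximately $G$-inner half-flip, and the same argument works for $(\CO_2,\delta)$. A routine rearrangement shows $(\bigotimes_\IN\CO_\infty,\gamma)\cong\bigotimes_\IN(\bigotimes_\IN\CO_\infty,\gamma)$ (and likewise for $\delta$), so Proposition \ref{aih-ssa} yields that both model actions are strongly self-absorbing. Notably this step does not use amenability.

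For (1), the Corollary following Theorem \ref{equi-McDuff} reduces the claim to producing a unital $*$-homomorphism $\CO_\infty\to F_{\omega,\alpha}(A)^{\tilde\alpha_\omega}$, i.e.\ a copy of $\CO_\infty$ inside the fixed-point algebra of the induced action on the central sequence algebra. As $A$ is a Kirchberg algebra, $F_\omega(A)$ contains unital copies of $\CO_\infty$ non-equivariantly; the task is to make the generating isometries approximately central \emph{and} approximately $G$-invariant at once. Here amenability of $G$ enters decisively: starting from an approximately central sequence of isometries with orthogonal ranges, I would average over a Følner sequence in the spirit of Ocneanu and then repair the Cuntz relations by polar decomposition and functional calculus (using pure infiniteness), obtaining approximately $G$-invariant isometries. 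Lifting these through the techniques behind Lemma \ref{approx-fixed} and Remark \ref{approx-fixed-b} produces the desired equivariant embedding, and then \ref{equi-McDuff} gives the strong cocycle conjugacy.

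Assertion (3) is the technical heart. Using that $\gamma$ is strongly self-absorbing by (2), Theorem \ref{equi-McDuff} reduces the statement to the existence of a unital \emph{equivariant} $*$-homomorphism $(\CO_\infty,\gamma)\to(F_{\omega,\alpha}(A),\tilde\alpha_\omega)$. Because $\gamma=\bigotimes_\IN\ad(u(g))$, a reindexation argument as in the proof of Theorem \ref{sssa-ssa} reduces this to equivariantly embedding a single inner system $(\CO_\infty,\ad(u))$, i.e.\ to finding an approximately central, approximately multiplicative unitary representation $g\mapsto w_g$ in $F_\omega(A)$ together with a compatible copy of $\CO_\infty$ on which $\tilde\alpha_\omega$ acts by $\ad(w)$. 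This is exactly where pointwise outerness of $\alpha$ is used: via an equivariant central-sequence (Rokhlin-type) argument for amenable groups, in the spirit of Ocneanu's model-action absorption and its $\mathrm{C}^*$-algebraic refinements by Izumi-Matui and Goldstein-Izumi, one constructs such representations. I expect this step --- building genuinely equivariant central sequences rather than merely fixed ones --- to be the \textbf{main obstacle}, and the only place where amenability and outerness are both used in full strength. Finally, for (4) I would combine the previous parts with $\CO_2\otimes\CO_2\cong\CO_2$: since $A\otimes\CO_2$ is a Kirchberg algebra and $\delta$ is strongly self-absorbing, Theorem \ref{equi-McDuff} and the tautological equivariant inclusion $\eins_A\otimes\id_{\CO_2}$ provide equivariant embeddings between $(\CO_2,\delta)$ and $(A\otimes\CO_2,\alpha\otimes\delta)$ in both directions, which a two-sided equivariant Elliott intertwining (Corollary \ref{equi-Ell2}) upgrades to $(\CO_2,\delta)\scc(A\otimes\CO_2,\alpha\otimes\delta)$; for the second statement I would reduce $(A\otimes\CO_2,\alpha\otimes\id_{\CO_2})\scc(\CO_2,\delta)$ via \ref{equi-McDuff} to an equivariant embedding into a fixed-point central sequence algebra, invoking pointwise outerness exactly as in (3).
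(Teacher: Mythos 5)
A point of order first: the paper does not prove this theorem. It is explicitly presented as a list of ``the main results of forthcoming work'' (\cite{Szabo16_K}) which \emph{relies on} the techniques developed here, so there is no in-paper argument to compare your proposal against; what follows assesses the proposal on its own terms. Your overall architecture --- reducing each absorption statement via Theorem \ref{equi-McDuff}, its corollary, \ref{scc-homo-ex} and \ref{equi-McDuff-2} to the construction of a unital equivariant $*$-homomorphism into $F_{\omega,\alpha}(A)$ or its fixed-point algebra --- is exactly the intended use of the machinery. Your argument for (2) is essentially complete and correct: the element $u(g)^{\otimes N}\otimes u(g)^{\otimes N}$ is fixed by the flip of $(\CO_\infty^{\otimes N})^{\otimes 2}$, so any unitary approximately implementing that (approximately inner) flip on a finite set containing these elements automatically almost commutes with them and is therefore almost $(\gamma\otimes\gamma)_g$-invariant; combined with the rearrangement $(\bigotimes_\IN\CO_\infty,\gamma)\cong\bigotimes_\IN(\bigotimes_\IN\CO_\infty,\gamma)$ and Proposition \ref{aih-ssa}, this yields (2), and you are right that amenability plays no role in that part.

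The remaining parts contain genuine gaps, and they sit precisely where the content of the forthcoming work lies. For (1), the mechanism you propose fails as stated: if $s$ is an isometry in $F_\omega(A)$, the F\o lner average $|F|^{-1}\sum_{g\in F}\tilde{\alpha}_{\omega,g}(s)$ has isometry defect $|F|^{-2}\sum_{g,h}\tilde{\alpha}_{\omega,h}(s)^*\tilde{\alpha}_{\omega,g}(s)$, and the off-diagonal terms are not small unless the translates of $s$ have approximately orthogonal ranges --- a Rokhlin-type condition that cannot be arranged for an \emph{arbitrary} cocycle action (consider a nontrivial inner action), which is the generality claimed in (1); no polar decomposition repairs a defect of order one. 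The construction of a unital copy of $\CO_\infty$ in $F_{\omega,\alpha}(A)^{\tilde{\alpha}_\omega}$ for arbitrary amenable cocycle actions on Kirchberg algebras is the technical heart of \cite{Szabo16_K} and requires a different idea. For (3) you correctly isolate the needed statement --- an equivariant unital embedding of $(\CO_\infty,\ad u)$ into $(F_{\omega,\alpha}(A),\tilde{\alpha}_\omega)$, i.e.\ an approximately central covariant representation built from pointwise outerness --- but you supply no argument for it; appealing to ``the spirit of Ocneanu'' is not a proof, and the cited finite-group and $\IZ^d$ results of Izumi--Matui and Goldstein--Izumi do not cover general countable amenable groups. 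For (4), you assert without construction the existence of a unital equivariant embedding $(A\otimes\CO_2,\alpha\otimes\delta)\to(\CO_2,\delta)$ (an equivariant $\CO_2$-embedding theorem, itself nontrivial), and the upgrade via Corollary \ref{equi-Ell2} additionally requires that the two compositions be approximately $G$-unitarily equivalent to the respective identity maps: one direction follows from \ref{ssa-facts}\ref{ssa-facts:3} once absorption of $(\CO_2,\delta)$ is known, but the other needs a uniqueness theorem for unital equivariant $*$-homomorphisms \emph{into} $(\CO_2,\delta)$, which \ref{ssa-facts}\ref{ssa-facts:3} does not provide and which you do not establish.
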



\bibliographystyle{gabor}
\bibliography{master}

\end{document}